\numberwithin{equation}{section}
\newtheorem{thm}{Theorem}[section]
\newtheorem{lem}[thm]{Lemma}
\newtheorem{cor}[thm]{Corollary}
\newtheorem{rem}[thm]{Remark}
\newtheorem{conj}[thm]{Conjecture}
\newcommand{\del}{\backslash}
\newcommand{\si}{\hbox{\rm si}}
\newcommand{\co}{\hbox{\rm co}}
\title[Excluded minors]{The excluded minors for the class of matroids that are graphic or bicircular lift }
\date{\today}
\author[Rong Chen]{Rong Chen}
\address{Center for Discrete Mathematics, Fuzhou University, Fuzhou, P. R. China.}
\thanks{This research was partially supported by NSFC (No. 11471076).}
\begin{document}
\begin{abstract}
Bicircular lift matroids  are a class of matroids defined on the edge set of a graph. For a given graph $G$, the circuits of its bicircular lift  matroid  $L(G)$ are the edge sets of those subgraphs of $G$ that contain at least two cycles, and are minimal with respect to this property. For each cycle $C$ of $G$, since $L(G)/C$ is graphic and  most graphic matroids are not bicircular lift, the class of bicircular lift  matroids  is not minor-closed. In this paper, we prove that the class of matroids that are graphic or bicircular lift  has a finite list of excluded minors.
\end{abstract}

{\it Key Words:} excluded minors, bicircular lift  matroids , graphic matroids.

%\footnote{
%Email: rongchen@fzu.edu.cn.

%The research supported partially by CNNSF (No.11201076), SRFDP (No.20113514120010) and JA11032.  }

\maketitle

\section{Introduction}
We assume that the reader is familiar with fundamental definitions in matroid and graph theory. All definitions in matroid theory that are used but not defined in the paper follow from Oxley's book \cite{Oxley}. For a graph $G$, a set $X\subseteq E(G)$ is a {\sl cycle} if $G|X$ is a connected 2-regular graph.  bicircular lift  matroids  are a class of matroids defined on the edge set of a graph. For a given graph $G$, the circuits of its bicircular lift  matroid  $L(G)$ are the edge sets of those subgraphs of $G$ that contain at least two cycles, and are minimal with respect to this property. That is, the circuits of $L(G)$ consists of the edge sets of two edge-disjoint cycles with at most one common vertex, or three internally disjoint paths between a pair of distinct vertices.  

Bicircular lift matroids  are a special class of lift matroids that arises from biased graphs, where biased graphs and its lift matroids were introduced by Zaslavsky in \cite{Zas89,Zas91}. Let $BL$ denote the class of bicircular lift  matroids . For each cycle $C$ of $G$, since $L(G)/C$ is graphic and most graphic matroids are not in $BL$ by the following Lemma \ref{graph}, this class $BL$ is not minor-closed. But the union of $BL$ and the class of graphic matroids is a minor-closed class. Let $\overline{BL}$ denote this class. Irene Pivotto  \cite{Irene} conjectured

\begin{conj}
The class $\overline{BL}$ has a finite list of excluded minors.
\end{conj}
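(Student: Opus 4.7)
The plan is to argue by contradiction that any excluded minor $M$ of $\overline{BL}$ has bounded size, so only finitely many can exist. Let $M$ be such an excluded minor: $M$ is neither graphic nor bicircular lift, while every proper minor of $M$ lies in $\overline{BL}$. A first, routine reduction is to pass to the $3$-connected case. Since $\overline{BL}$ is closed under direct sums and under $2$-sums of graphic and/or bicircular lift pieces in the natural way, a low-connectivity decomposition $M=M_1\oplus M_2$ or $M=M_1\oplus_2 M_2$ would place both parts in $\overline{BL}$ and hence $M$ itself, contradicting minimality. So I assume $M$ is $3$-connected.

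The next step invokes Tutte's excluded-minor theorem for graphic matroids: since $M$ is non-graphic, it has a minor from $\{U_{2,4},F_7,F_7^*,M^*(K_5),M^*(K_{3,3})\}$, each of which one checks is bicircular lift. Hence $M$ has a proper minor $N=L(G)$ that is bicircular lift but not graphic. Using a splitter-type argument in the spirit of Seymour's theorem, I would locate an element $e\in E(M)$ such that $M\setminus e$ (or $M/e$) still contains $N$ as a minor, and since $M\setminus e\in\overline{BL}$ it admits either a graphic representation $M(H)$ or a bicircular lift representation $L(G')$ that extends the biased-graph structure already present in $N$. A symmetric argument applies when a proper minor of $M$ is graphic but not in $BL$, using an excluded-minor representative such as $M^*(K_4)$ coming from Lemma~\ref{graph}.

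The heart of the proof will then be a one-element extension analysis using Zaslavsky's biased-graph framework: given a $3$-connected bicircular lift matroid $L(G)$ of sufficiently large size, classify the one-element extensions that fail to be graphic, fail to be bicircular lift, yet whose every proper minor lies in $\overline{BL}$. Whitney's $2$-isomorphism theorem controls how graphic representations $M(H)$ of nearby minors can glue to the biased-graph representation of $L(G)$, and Lemma~\ref{graph} pins down the thin overlap $BL\cap\{\text{graphic}\}$. These two ingredients cut the extension problem down to finitely many configurations, and a case analysis of those configurations forces an absolute bound on $|E(M)|$.

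The main obstacle will be precisely this extension/stability lemma: one must show that when $M\setminus e$ is a sufficiently large and sufficiently connected bicircular lift matroid, the requirement that $M$ itself, together with all its single-element minors, lie in $\overline{BL}$ already forces $M\in\overline{BL}$, except in a bounded family of sporadic configurations. Controlling inequivalent biased-graph representations of the same matroid, tracking how $3$-connectivity of $M$ is reflected in the underlying biased graph, and handling the switch between a graphic and a bicircular lift representation on the common minors are the delicate points; everything else reduces to book-keeping and a finite check.
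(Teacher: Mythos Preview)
Your proposal contains two concrete errors that undermine the outlined argument.

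First, the claim that ``$\overline{BL}$ is closed under direct sums'' is false, and the reduction to the $3$-connected case is not routine. Indeed, $U_{2,4}$ is bicircular lift (it equals $L(K_2^4)$) and a single loop is graphic, yet $U_{2,4}\oplus\{\text{loop}\}$ is neither graphic (it has a $U_{2,4}$-minor) nor bicircular lift (it has a loop). So there \emph{is} a disconnected excluded minor, contrary to your first paragraph. The paper handles this in Theorem~\ref{1-con}, and then spends Section~4 (Lemma~\ref{2-con-}, Theorem~\ref{2-con}) proving that connected excluded minors are $3$-connected; this uses a structural lemma on how $2$-sums interact with bicircular lift representations and is not the one-line closure statement you assert.

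Second, the sentence ``it has a minor from $\{U_{2,4},F_7,F_7^*,M^*(K_5),M^*(K_{3,3})\}$, each of which one checks is bicircular lift'' is factually wrong: $F_7$, $F_7^*$, $M^*(K_5)$, and $M^*(K_{3,3})$ are \emph{not} bicircular lift. The paper proves exactly this in Lemmas~\ref{K_5^*}--\ref{F_7^*}, showing that all four are themselves excluded minors of $\overline{BL}$. The correct inference is the opposite of yours: since these four are excluded minors, any \emph{other} excluded minor cannot contain them as proper minors, and hence by Tutte's theorem must have a $U_{2,4}$-minor. Your proposed mechanism for producing a ``non-graphic bicircular lift minor $N=L(G)$'' therefore collapses.

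Beyond these errors, the ``heart of the proof'' is too schematic to be assessable. The paper's actual mechanism is quite different from a generic one-element extension analysis: it finds \emph{two} elements $e_1,e_2$ such that $M\setminus e_1$ and $M\setminus e_2$ have bicircular lift representations on a \emph{common} underlying graph $G$ (Lemma~\ref{3-graphs}, which relies crucially on the uniqueness-of-representation result Theorem~\ref{non-eq-repre}), and then shows that this compatibility forces $M=L(G)$ once the rank is large enough (Lemma~\ref{lift-bicircular}). Bounding $|E(M)|$ then requires a separate argument that $M$ has no $U_{2,5}$-restriction (Lemma~\ref{long-line}). None of these ingredients appears in your sketch.
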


In this paper, we prove that the conjecture is true. In fact, we prove a stronger result. 

\begin{thm}\label{main-thm}
Let $M$ be an excluded minor of $\overline{BL}$. Then  %exactly one of the following holds. 
\begin{itemize}
\item either $M$ is a direct sum of the uniform matroid  $U_{2,4}$ and a loop,  or 
\item $M$ is $3$-connected with $r(M)\leq11$ and with $|E(M)|\leq 224$.
\end{itemize}
\end{thm}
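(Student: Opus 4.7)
The plan is to split the argument into three phases: first classify non-3-connected excluded minors, then bound the rank of the 3-connected excluded minors, and finally bound their size.

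\emph{Phase 1 (non-3-connected case).} A key preliminary observation is that every bicircular lift matroid $L(G)$ is loopless, since a single edge of $G$ cannot contain two cycles. Suppose $M$ is an excluded minor of $\overline{BL}$ that is not 3-connected. The coloop case is ruled out first: if $e$ is a coloop of $M$ then $M = (M \setminus e) \oplus U_{1,1}$, and membership in $\overline{BL}$ is preserved in both directions, so $M$ cannot be excluded. For a genuine 1-separation $M = M_1 \oplus M_2$ with $|E(M_1)|, |E(M_2)| \geq 1$, a direct sum of two graphic matroids is graphic, while a direct sum of two bicircular lift matroids is generally not bicircular lift (cycles from different components create extra circuits). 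Careful case analysis, combined with Tutte's characterization of graphic excluded minors $\{U_{2,4}, F_7, F_7^*, M(K_5)^*, M(K_{3,3})^*\}$, reduces to the unique case that one summand is $U_{0,1}$ and the other is an excluded minor for the graphic class that lies in $BL$. A direct check of the five candidates shows only $U_{2,4}$ is bicircular lift, giving $M = U_{2,4} \oplus U_{0,1}$. The 2-separation case is handled analogously by showing that 2-sums of matroids in $\overline{BL}$ remain in $\overline{BL}$.

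\emph{Phase 2 (bounding the rank).} Assume $M$ is a 3-connected excluded minor. Every one-element deletion and contraction lies in $\overline{BL}$, hence is graphic or bicircular lift. Using the biased-graph description of lift matroids due to Zaslavsky and a splitter-style argument, I aim to show that if $r(M) \geq 12$ then $M$ contains a 3-connected minor $N \in \overline{BL}$ large enough that every 3-connected one-element extension of $N$ must itself lie in $\overline{BL}$; this forces $M \in \overline{BL}$, contradicting exclusion. The explicit bound $r(M) \leq 11$ emerges from the ranks of the extremal configurations that obstruct such lifting.

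\emph{Phase 3 (bounding the size).} Given $r(M) \leq 11$, each deletion $M \setminus e$ lies in $\overline{BL}$. A simple graphic matroid of rank $r$ has at most $\binom{r+1}{2}$ elements, and a simple bicircular lift matroid of rank $r$ corresponds to a graph on roughly $r$ vertices and hence also has $O(r^2)$ elements. Using 3-connectivity to control parallel classes and exploiting that every single-element deletion of $M$ lands in one of these two well-behaved subclasses, a careful count across the possible structures yields the explicit bound $|E(M)| \leq 224$.

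\emph{Main obstacle.} The most delicate step is Phase 2. The principal difficulty is that $BL$ itself is not minor-closed: contracting a cycle of a bicircular lift matroid yields a graphic matroid, so the two constituent classes of $\overline{BL}$ must be tracked simultaneously. Controlling how graphic and bicircular lift substructures coexist inside a 3-connected $M$, and preventing hybrid one-element extensions from producing arbitrarily large excluded minors, is the crux of the combinatorial work.
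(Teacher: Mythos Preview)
Your Phase 1 contains a concrete error: the assertion that 2-sums of matroids in $\overline{BL}$ remain in $\overline{BL}$ is false. Take $N_1 = U_{2,4}$ (bicircular lift) and $N_2 = M(K_4)$ (graphic). The 2-sum $N_1 \oplus_2 N_2$ is not graphic since it has a $U_{2,4}$-minor, and it is not bicircular lift either: Lemma~\ref{2-con-} shows that whenever a 2-sum is bicircular lift, each summand is itself bicircular lift with the basepoint realized as a loop of the representing graph, yet $M(K_4)$ is not bicircular lift by Lemma~\ref{K_4}. Hence the 2-separation case cannot be dispatched by a closure-under-2-sum statement. The actual argument (Theorem~\ref{2-con}) runs in the opposite direction: one uses Lemma~\ref{2-con-} on proper minors of $M$ to force one side of any 2-sum decomposition of $M$ to be exactly $U_{2,4}$, and then analyzes the other side via Lemma~\ref{graph} and a study of its parallel classes to reach a contradiction.

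Your Phase 3 also rests on a false premise. A simple bicircular lift matroid of rank $r$ does \emph{not} have $O(r^2)$ elements in general: a parallel class of size $k$ in the underlying graph $G$ produces a $U_{2,k}$-restriction of $L(G)$, not a parallel class, so $L(G)$ can be simple while $G$ has arbitrarily large edge-multiplicity. The bound $|E(M)|\le 224$ depends on first proving that $M$ has no $U_{2,5}$-restriction (Lemma~\ref{long-line}), which caps each parallel class of the representing graph at four edges; without this lemma no finite size bound follows from $r(M)\le 11$ alone. Finally, your Phase 2 is too schematic to constitute a plan: a single deletion $M\del e$ having an essentially unique bicircular lift representation does not by itself force $M$ to have one. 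The paper instead finds two deletions $M\del e_1$, $M\del e_2$ whose representations can be made to agree on $E(M)-\{e_1,e_2\}$ (Lemma~\ref{3-graphs}, using the uniqueness result Theorem~\ref{non-eq-repre}) and then verifies directly that the common extension $G$ satisfies $M=L(G)$ (Lemma~\ref{lift-bicircular}).
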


In the rest of paper, we always let $M$ be an excluded minor of $\overline{BL}$. The paper is organized as follows. Some related definitions and basic results are given in Section 2. In Section 3, we prove that when $M$ is not connected, $M$ is a direct sum of the uniform matroid $U_{2,4}$ and a loop. In Section 4, we prove that if $M$ is connected then $M$ is 3-connected. In Section 5, we prove that if $M$ is 3-connected then $r(M)\leq11$ and $|E(M)|\leq 224$.

Unfortunately, the number of matroids with rank at most 11 and size at most 224 is massive. There are too many matroids! The bound is outside what we are able to check with a computer.  The search space is just too large. 

\section{Preliminaries}
Let $G$ be a graph. Set $|G|:=|V(G)|$. For a vertex $v$ of $G$, let $st_G(v)$ denote the set of all edges adjacent with $v$.  An edge of $G$ is a {\sl link} if its end-vertices are distinct; otherwise it is a {\sl loop}. Let $\mathrm{loop}(G)$ be the set consisting of loops of $G$. We say that $G$ is {\sl 2-edge-connected} if each edge of $G$ is contained in some cycle.  A graph obtained from graph $G$ with some edges of $G$ replaced by internally disjoint paths is a {\sl subdivision} of $G$.

Let $e,f\in E(G)$. If $\{e,f\}$ is a cycle, then $e$ and $f$ are {\sl a parallel pair}. A {\sl parallel class} of $G$ is a maximal subset $P$ of $E(G)$ such that any two members of $P$ are a parallel pair and no member is a loop. Moreover, if $|P|\geq2$ then $P$ is {\sl non-trivial}; otherwise $P$ is {\sl trivial}. Let $\si(G)$ denote the graph obtained from $G$ by deleting all loops and all but one distinguished element of each non-trivial parallel class. Obviously, the graph we obtain is uniquely determined up to a renaming of the distinguished elements. If $G=\si(G)$, then $G$ is {\sl simple}. 

Two elements are a {\sl series pair} of a graph (or matroid $N$) if and only if  each cycle (or circuit) can not intersect them in exactly one element and they are contained in at least one cycle (or circuit).  A {\sl series class}  is a maximal set $X$ of  a graph (or matroid) such that every two elements of $X$ form a series pair. Let $\co(G)$ (or $\co(N)$) denote a graph (or matroid) obtained from $G$ (or $N$) by contracting all cut-edges (or coloops) from $G$ (or $N$) and then, for each series class $X$, contracting all but one distinguished element of $X$. Obviously, the graph we obtain is uniquely determined up to a renaming of the distinguished elements. We say that  $G$ is {\sl cosimple} if $G$ has no cut-edges or non-trivial series classes. 

\begin{lem}\label{series-pair}
Assume that $L(G)$ has at least two circuits. Then $\{e,f\}$ is a series pair of $L(G)$ if and only if $\{e,f\}$ is a series pair of $G$.
\end{lem}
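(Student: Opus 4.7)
\emph{Plan.} I would prove both implications by case analysis on the two types of circuits of $L(G)$---type (a), two edge-disjoint cycles sharing at most one vertex, and type (b), a theta subgraph (three internally disjoint paths between a pair of vertices)---and exploit the fact that a series pair $\{e,f\}$ in the graph $G$ is equivalent to $\{e,f\}$ being a $2$-element bond of $G$, which provides a vertex partition $V(G)=V_1\cup V_2$ whose only crossing edges are $e$ and $f$.

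\emph{Backward direction.} Assume $\{e,f\}$ is a series pair of $G$ and fix the cut partition $V_1\cup V_2$. Every cycle of $G$ crosses the cut an even number of times and therefore uses $0$ or $2$ of $\{e,f\}$. For any circuit $B$ of $L(G)$: if $B=C_1\cup C_2$ is of type (a), each $C_i$ already uses $\{e,f\}$ evenly and so does $B$. If $B$ is a theta between vertices $u,v$ with internally disjoint paths $P_1,P_2,P_3$, a parity argument along the cut rules out the case $u,v$ lie on opposite sides (each $P_i$ would need an odd number of crossing edges, demanding at least three crossing edges in total, contradicting $|\{e,f\}|=2$); and when $u,v$ lie on the same side each $P_i$ uses an even number of crossing edges, so at most one of the three paths can use both $e$ and $f$ while the others use neither. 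In every case $B$ meets $\{e,f\}$ in $0$ or $2$ elements. For the positive condition, I take a cycle $C$ of $G$ through $\{e,f\}$ (which exists by the series-pair assumption in $G$) and use the hypothesis that $L(G)$ has at least two circuits to produce a second cycle $C'$ of $G$; depending on how $C'$ meets $C$, either $C\cup C'$ is already a type (a) circuit of $L(G)$ through $\{e,f\}$, or $C\cup C'$ contains a theta subgraph of $G$ extending $C$ and therefore still containing $\{e,f\}$.

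\emph{Forward direction.} Assume $\{e,f\}$ is a series pair of $L(G)$ and fix a circuit $B\supseteq\{e,f\}$ of $L(G)$. A case analysis on the structure of $B$ (theta, or type (a) with $e,f$ in the same or in different component cycles) produces a cycle of $G$ through both $e$ and $f$; the only delicate subcase is $B=C_1\cup C_2$ of type (a) with $e\in C_1$ and $f\in C_2$, where I additionally invoke the existence of a second circuit of $L(G)$ and the series-pair hypothesis to force a cycle through both $e,f$. For the separation condition, suppose for contradiction that some cycle $C$ of $G$ contains $e$ but not $f$; using the second circuit of $L(G)$, I extract a cycle $C'$ of $G$ that is edge-disjoint from $C$, shares at most one vertex with $C$, and misses $f$, so that $C\cup C'$ is a type (a) circuit of $L(G)$ containing $e$ but not $f$, contradicting the series-pair hypothesis in $L(G)$.

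\emph{Main obstacle.} The conceptual engine is the theta-parity argument, which handles most cases immediately. The technical crux---and the main obstacle---is the construction of the auxiliary cycle $C'$ in the correct region of $G$: edge-disjoint from the given cycle, sharing at most one vertex with it, and on the correct side of $\{e,f\}$. This construction is exactly where the hypothesis ``$L(G)$ has at least two circuits'' is essential.
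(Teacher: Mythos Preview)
Your backward direction is fine and is essentially the paper's one-line observation (``every cycle of $G$ meets $\{e,f\}$ evenly, hence so does every circuit of $L(G)$'') written out with the cut-parity argument.

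The forward direction, however, has a genuine gap in the separation step. You assert that from the second circuit of $L(G)$ you can extract a cycle $C'$ of $G$ that is \emph{edge-disjoint from $C$, shares at most one vertex with $C$, and avoids $f$}. Such a $C'$ need not exist. Take $G=K_4$, $e=12$, $f=34$, and $C$ the triangle $\{12,13,23\}$: every cycle of $K_4$ avoiding $34$ shares at least one edge with $C$, and every cycle of $K_4$ shares at least two vertices with $C$, so no type-(a) circuit $C\cup C'$ of the required form is available. The paper avoids this by asking for much less: since $L(G)$ has at least two circuits, some circuit avoids $f$, and that circuit contains at least two distinct cycles of $G$ avoiding $f$; hence there is a cycle $C'\neq C$ with $f\notin C'$. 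One then checks that $e$ is not a coloop of $L(G)|(C\cup C')$ (removing $e$ leaves a subgraph whose cycle space is still nontrivial because $C'\neq C$), so some circuit $X\subseteq C\cup C'$ contains $e$ and not $f$---the desired contradiction.

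A smaller but related issue: your ``delicate subcase'' for the positive condition ($B=C_1\cup C_2$ with $e\in C_1$, $f\in C_2$) is only hand-waved, and inside $B$ there is in fact no cycle through both $e$ and $f$. This subcase evaporates if you prove the separation condition first: once no cycle of $G$ contains exactly one of $e,f$, the cycle $C_1\ni e$ would be forced to contain $f$ as well, contradicting edge-disjointness of $C_1,C_2$. So reorder the argument---separation first, then the positive condition follows immediately from the existence of any cycle through $e$.
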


\begin{proof}
First we prove the ``if" part. Since each cycle of $G$ can not contain exactly one edge of $\{e,f\}$,  each circuit of $L(G)$ can not contain exactly one element of  $\{e,f\}$. So $\{e,f\}$ is a series pair of $L(G)$.

Secondly we prove the ``only if" part. Assume otherwise. Then there are cycles $C_e,C_f$ of $G$ with $\{e\}=C_e\cap \{e,f\}, \{f\}=C_f\cap \{e,f\}$. On the other hand, since $L(G)$ has at least two circuits, some circuit in $L(G)$ does not contain $f$. So, besides $C_e$ there is another cycle $C$ of $G$ with $f\notin C$. Hence, there is a circuit $X$ of $L(G)$ with $e\in X\subseteq C_e\cup C$. Since $f\notin C_e\cup C$, we have that $\{e,f\}$ is not a series pair of $L(G)$, a contradiction.
\end{proof}

Note that when $L(G)$ has only one circuit, Lemma \ref{series-pair} is not true.

\begin{rem}\label{remark:series-pair}
Note that a matroid $N$ has at least two circuits if and only if $r^*(N)\geq2$. Hence, by Lemma \ref{series-pair}, when $r^*(L(G))\geq2$, the set $\{e,f\}$ is a series pair of $L(G)$ if and only if $\{e,f\}$ is a series pair of $G$.
\end{rem}

Let $G$ a connected graph with cycles. Since a connected spanning subgraph of $G$ with a unique cycle is a basis of $L(G)$, we have $r(L(G))=|G|$ and $st_G(v)-\mathrm{loop}(G)$ is a union of cocircuits of $L(G)$ for each vertex $v$ of $G$. Moreover, when $r^*(L(G))\geq2$, by Remark \ref{remark:series-pair} we have $r(\co(L(G)))=|\co(G)|$. In the rest of the paper, we will use these properties frequently without reference.

Given a set  $X$ of edges, let $G|X$ denote the subgraph of $G$ with edge set $X$ and no isolated vertices. Let $(X_1,X_2)$ be a partition of $E(G)$ with $V(G|X_1)\cap V(G|X_2)=\{u_1,u_2\}$. We say that $G'$ is obtained by a {\sl Whitney switching} on $G$ on $\{u_1,u_2\}$ if $G'$ is a graph obtained by identifying vertices $u_1,u_2$ of $G|X_1$ with vertices $u_2,u_1$ of $G|X_2$, respectively. A graph $G'$ is {\sl 2-isomorphic} to $G$ if $G'$ is obtained from $G$ by a sequence of the operations: Whitney switchings, identifying two vertices from distinct components of a graph, or partitioning a graph into components each of which is a block of the original graph. For graphic matroids, Whitney \cite{Whitney}  proved

\begin{thm}(Whitney's\ $2$-Isomorphism\ Theorem.)
Let $G_1$ and $G_2$ be graphs. Then $M(G_1)\cong M(G_2)$ if and only if $G_1$ and $G_2$ are $2$-isomorphic.
\end{thm}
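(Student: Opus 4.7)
The plan is to prove both implications. For the \emph{if} direction, I would verify that each of the three operations preserves the collection of edge subsets that form cycles, and hence preserves the cycle matroid. Splitting a graph into its blocks and identifying vertices from distinct components both preserve cycles trivially, since every cycle of a graph lies inside a single block, in particular inside a single connected component. For a Whitney switching on $\{u_1,u_2\}$, a cycle of $G$ either lies entirely in $G|X_1$ or entirely in $G|X_2$, in which case it is unaffected, or else it decomposes into a path from $u_1$ to $u_2$ in $G|X_1$ together with a path from $u_2$ to $u_1$ in $G|X_2$; after the switching the second path joins the same endpoints in the opposite order, so the same edge set still closes up into a cycle.

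For the \emph{only if} direction, I would first reduce to the 2-connected case. The cycle matroid of a graph decomposes as a direct sum over its connected components, so by applying cross-component identifications I may assume both $G_1$ and $G_2$ are connected. Since the blocks of a connected graph correspond exactly to the connected components of its cycle matroid, the block decomposition is matroid-invariant; after further applications of the splitting and identifying operations I may therefore assume that $G_1$ and $G_2$ are 2-connected and that the isomorphism $M(G_1)\cong M(G_2)$ pairs blocks with blocks.

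I would then argue by induction on $|G_1|$. If $G_1$ is 3-connected, I would invoke the earlier theorem of Whitney that the cycle matroid of a 3-connected graph determines the graph uniquely; this forces $G_2$ to be 3-connected and isomorphic to $G_1$, so the two graphs are trivially 2-isomorphic. Otherwise $G_1$ has a 2-vertex cut $\{u_1,u_2\}$ inducing a partition $(X_1,X_2)$ of $E(G_1)$ with $|X_1|,|X_2|\geq 2$, which is also a 2-separation of $M(G_1)$. Via the matroid isomorphism this yields a 2-separation of $M(G_2)$, and in a 2-connected graphic matroid such a 2-separation must again be induced by a 2-vertex cut $\{v_1,v_2\}$ of $G_2$. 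Adjoining a virtual edge between the boundary vertices on each side produces strictly smaller graphs whose cycle matroids are matched across $G_1$ and $G_2$; I would apply the inductive hypothesis to each side and then glue the two 2-isomorphisms, performing a Whitney switching at $\{u_1,u_2\}$ if the two boundary labellings disagree.

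The main obstacle is this inductive step: one must show that a matroid 2-separation arising from a vertex cut in one graph actually pulls back to a vertex cut, and not some other kind of 2-separation, in the other graph, and that the two admissible ways of gluing the two halves across the pair of boundary vertices differ precisely by a Whitney switching. The key tool is the description of cocircuits of $M(G)$ as minimal edge cuts of $G$, which tightly constrains how the boundary edges interact with the vertex-cut structure on each side and ensures that the inductive hypothesis applies cleanly to each half after the virtual edge is added.
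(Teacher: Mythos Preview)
The paper does not actually prove this theorem: it is stated with attribution to Whitney \cite{Whitney} and used as a black box, with no proof or even a sketch given. So there is no ``paper's own proof'' to compare your attempt against.

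Your outline is the standard route to Whitney's theorem and is essentially sound. The \emph{if} direction is handled correctly. For the \emph{only if} direction, the reduction to the 2-connected case and the appeal to Whitney's uniqueness theorem in the 3-connected case are both fine. The inductive step you describe is also the right idea, and the obstacle you flag---that a matroid 2-separation in a 2-connected graph must come from a 2-vertex cut---is exactly the point that requires work. One caveat: naive induction on $|G_1|$ can run into trouble because the two sides of a 2-separation need not each have strictly fewer vertices once the virtual edge is adjoined (one side could use all the vertices); the clean way to organise the argument is via the canonical tree decomposition of a 2-connected graph into its 3-connected components (Tutte's decomposition), which is determined by the cycle matroid, rather than an ad hoc induction on a single 2-cut. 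Your sketch would benefit from either switching to that framework or being more careful about the induction parameter.
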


Let $N$ be a bicircular lift  matroid . If $G$ is a graph satisfying $N=L(G)$, then we say that $G$ is a {\sl bicircular lift  graphic representation} of $N$.  Evidently, by Whitney's 2-Isomorphic Theorem, each graph that is 2-isomorphic to $G$ is a bicircular lift  graphic representation of $N$. So, we can assume that $G$ is connected. In fact,   we proved  

\begin{thm}(\cite{Chen15}, Corollary 1.3.)\label{non-eq-repre}
Let $G_1$ and $G_2$ be connected graphs with $L(G_1)=L(G_2)$ and such that $L(G_1)$ has at least two circuits. If $|\co(G_1)|\geq5$ then $G_1$ and $G_2$ are $2$-isomorphic.
\end{thm}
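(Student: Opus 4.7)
The plan is to reduce the problem to a graphic matroid comparison via contraction and then invoke Whitney's 2-Isomorphism Theorem on a common contraction of $G_1$ and $G_2$. First I would perform a cosimplification reduction: since $L(G_1)$ has at least two circuits, Remark~\ref{remark:series-pair} tells us that series pairs of $L(G_i)$ coincide with series pairs of $G_i$, and cut-edges of $G_i$ are coloops of $L(G_i)$. Because 2-isomorphism is preserved under contracting cut-edges and all-but-one elements of each series class, I may assume $G_i=\co(G_i)$, so $|G_1|=|G_2|=r(L(G_1))\geq 5$.

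Next, I would give a matroid-theoretic characterization of the cycles of $G_i$. The introduction notes that $L(G)/C$ is graphic whenever $C$ is a cycle of $G$; the converse statement I would aim for is that, under the cosimplification and size hypothesis, a subset $C\subseteq E$ is a cycle of $G$ if and only if $C$ is minimal with the property that $L(G)/C$ is graphic. Since this characterization is purely matroidal and $L(G_1)=L(G_2)$, the cycles of $G_1$ and $G_2$ would then coincide as subsets of $E$.

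Fixing any such common cycle $C$, I would obtain
\[
M(G_1/C)=L(G_1)/C=L(G_2)/C=M(G_2/C),
\]
so Whitney's 2-Isomorphism Theorem gives $G_1/C\cong_2 G_2/C$. The final step is to lift this 2-isomorphism on the contractions to a 2-isomorphism $G_1\cong_2 G_2$. For this, one recovers the cyclic arrangement of chords of $C$ and of edges incident to internal vertices of $C$ from theta-type circuits of $L(G_i)$ that pass through $C$, together with the handcuff circuits formed by $C$ and a second cycle of $G_i$; these constraints should pin down the attachment pattern along $C$ up to Whitney switchings already permitted by the 2-isomorphism on $G_i/C$.

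The main obstacle, I expect, is this final lifting step. Contracting $C$ destroys the cyclic ordering of chord- and branch-attachment sites along $C$, and reconstructing it from matroidal data, while respecting every Whitney switching available on the contraction, is delicate. The hypothesis $|\co(G_1)|\geq 5$ should be exactly the threshold that rigidifies this reconstruction: smaller cosimplifications presumably admit genuine counterexamples arising from symmetric small configurations where thetas through $C$ do not uniquely constrain the cyclic order. A secondary difficulty is establishing the matroidal characterization of cycles used in the second step, which relies on the classification of which bicircular lift matroids are graphic (implicit in Lemma~\ref{graph}).
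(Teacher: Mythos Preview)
This theorem is quoted from \cite{Chen15} (as Corollary~1.3 there) and is not proved in the present paper, so there is no argument here against which to compare your proposal.

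Regarding your sketch: note first that if step~2 held, steps~3 and~4 would be superfluous. If the cycles of $G_1$ and $G_2$ coincide as subsets of $E$, then $M(G_1)=M(G_2)$, and Whitney's theorem already gives that $G_1$ and $G_2$ are $2$-isomorphic; no contraction-and-lifting is needed.

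The real problem is that the characterization in step~2 is false. Take $G=K_5$: it is cosimple with $|G|=5$, and $L(K_5)$ certainly has at least two circuits. Let $T$ be a spanning path of $K_5$. Since $T$ is independent in $L(K_5)$ and $\cl_{L(K_5)}(T)=T$ (adding any edge to a spanning tree creates exactly one cycle, which is still independent in $L$), we get $L(K_5)/T\cong U_{1,6}$, which is graphic. On the other hand, for every proper subset $T'\subsetneq T$ one has $L(K_5)/T'=L(K_5/T')$, and a direct check shows this matroid always has a $U_{2,4}$-minor and is therefore non-graphic. Hence $T$ is a minimal set with $L(K_5)/T$ graphic, yet $T$ is not a cycle. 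Your proposed matroidal criterion therefore does not recover the cycles of $G$ from $L(G)$. Since recovering the cycle set of $G$ from $L(G)$ is exactly equivalent, via Whitney, to the conclusion of the theorem, no one-line matroidal characterization should be expected to deliver it; the argument in \cite{Chen15} presumably proceeds by a detailed structural analysis of bicircular lift representations rather than by any such shortcut.
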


The following obvious results about bicircular lift  matroids  will be used without reference.  
\begin{itemize}
\item[(a)] $L(G)$ has no loops.  
\item[(b)] At most one component of $L(G)$ has circuits. 
\item[(c)] $L(G)$ is connected if and only if $G$ is 2-edge-connected and has at least two cycles.
\end{itemize}

%An {\sl ear} of a connected graph is a path such that its internal vertex has degree two and its end-vertices have degree at least three and $P$ is contained in a cycle. Hence an ear is a subset of some series class but a series class maybe not an ear. No matter whether $X$ is an ear, by a sequence of Whitney Switching we can assume that each series class is an ear.

\section{The Non-connected Case}
In this section, we prove that if an excluded minor $M$ is not connected then it is a direct sum of $U_{2,4}$ and a loop. To prove this, first we need to prove that matroids $M^*(K_5), M^*(K_{3,3}), F_7$ and $F_7^*$ are excluded minors of $\overline{BL}$.

Tutte \cite{Tutte} proved

\begin{thm}(\cite{Oxley}, Theorem 10.3.1.)\label{ex-mi-of-bi}
A matroid is graphic if and only if it has no minor isomorphic to $U_{2,4}, F_7, F^*_7, M^*(K_5)$ and $M^*(K_{3,3})$.
\end{thm}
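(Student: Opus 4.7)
The plan is to follow the classical layered route. I would first verify that each of $U_{2,4}$, $F_7$, $F_7^*$, $M^*(K_5)$, and $M^*(K_{3,3})$ is a genuine excluded minor—non-graphic but with every proper minor graphic. For the non-graphicness: $U_{2,4}$ is not binary and graphic matroids are binary; $F_7$ and $F_7^*$ are binary but not regular, hence non-graphic since graphic matroids are regular; $M^*(K_5)$ and $M^*(K_{3,3})$ are duals of the matroids of the Kuratowski graphs, and the dual of a graphic matroid is graphic precisely when the underlying graph is planar. Minor-minimality is a finite verification: each single-element deletion or contraction simplifies to a small graphic matroid, checkable directly on ground sets of sizes $4$, $7$, and $10$.

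For the converse—if $M$ has none of these five matroids as a minor, then $M$ is graphic—the cleanest strategy is to chain three Tutte-style excluded-minor theorems. First, $M$ has no $U_{2,4}$ minor, so $M$ is binary. Second, a binary matroid with no $F_7$ or $F_7^*$ minor is regular. Third, a regular matroid with no $M^*(K_5)$ or $M^*(K_{3,3})$ minor is graphic. The first two are standalone classical results I would invoke directly; the substantive work concentrates in the third implication.

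The main obstacle is precisely this last step, the bridge from regularity to graphicness. My preferred route invokes Seymour's decomposition theorem: every regular matroid can be built from graphic matroids, cographic matroids, and the sporadic $10$-element matroid $R_{10}$ by $1$-, $2$-, and $3$-sums. Assuming $M$ regular with no $M^*(K_5)$ or $M^*(K_{3,3})$ minor, I would argue by induction on the number of pieces in a Seymour decomposition. The matroid $R_{10}$ contains $M^*(K_{3,3})$ as a minor, so $R_{10}$ cannot appear. Any cographic piece $M^*(H)$ that is not also graphic must come from a non-planar graph $H$, so by Kuratowski's theorem $H$ contains a subdivision of $K_5$ or $K_{3,3}$, giving $M^*(K_5)$ or $M^*(K_{3,3})$ as a minor of $M^*(H)$ and hence of $M$—a contradiction. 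Thus every piece is graphic, and one finishes by showing the class of graphic matroids is closed under $k$-sums for $k\le 3$, a direct graph-gluing argument. Tutte's original proof predates Seymour's theorem and proceeds combinatorially via the theory of bridges of a circuit in a binary matroid; it is considerably more intricate, and for the present paper I would simply cite Oxley's exposition, as the author does.
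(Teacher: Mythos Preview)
Your proposal is a correct proof sketch, but there is no comparison to make here: the paper does not prove this theorem at all. It is stated with attribution to Tutte and a pointer to Oxley's textbook (Theorem~10.3.1), and is used as a black box throughout. The author's own work begins only with the subsequent lemmas showing that $M^*(K_5)$, $M^*(K_{3,3})$, $F_7$, and $F_7^*$ lie in $\mathcal{E}x$, which is a different (and much easier) claim: that these matroids are excluded minors for $\overline{BL}$, not for the class of graphic matroids.

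Your outlined argument is sound. The layered route---no $U_{2,4}$ gives binary, then no $F_7$ or $F_7^*$ gives regular, then no $M^*(K_5)$ or $M^*(K_{3,3})$ gives graphic---is standard, and invoking Seymour's decomposition for the last step is legitimate though heavy: Seymour's theorem does not rely on Tutte's graphic excluded-minor theorem, so there is no circularity. Your claim that $R_{10}$ has $M^*(K_{3,3})$ as a minor is correct (every single-element contraction of $R_{10}$ is isomorphic to $M^*(K_{3,3})$), and closure of graphic matroids under $1$-, $2$-, and $3$-sums is the straightforward graph-gluing you describe. Tutte's original 1959 proof predates Seymour's decomposition by two decades and proceeds via the combinatorics of bridges in binary matroids; your closing remark that one would in practice simply cite Oxley is exactly what the paper does.
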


Let $\mathcal{E}x$ be the set of excluded minors of $\overline{BL}$.

\begin{lem}\label{K_5^*}
$M^*(K_5)\in\mathcal{E}x$.
\end{lem}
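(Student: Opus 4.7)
The plan is to verify the two defining conditions of an excluded minor for $\overline{BL}$: (a) $M^*(K_5)\notin\overline{BL}$, and (b) every proper minor of $M^*(K_5)$ lies in $\overline{BL}$. Condition (a) in turn requires showing that $M^*(K_5)$ is neither graphic nor bicircular lift.

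The easy parts are condition (b) and the graphic half of (a). The matroid $M^*(K_5)$ is not graphic by Theorem \ref{ex-mi-of-bi}. For (b), since $\mathrm{Aut}(K_5)$ acts transitively on the edges of $K_5$, it suffices to examine one single-element deletion and one single-element contraction. By matroid duality these equal $M^*(K_5/e)$ and $M^*(K_5\setminus e)$, respectively. Both $K_5/e$ (a $4$-vertex multigraph) and $K_5\setminus e$ (a simple graph on $5$ vertices) are planar, so their cycle matroids have graphic duals; hence both single-element minors of $M^*(K_5)$ are graphic, and by minor-closure of the graphic class every proper minor of $M^*(K_5)$ is graphic, in particular in $\overline{BL}$.

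The remaining task is to show $M^*(K_5)\notin BL$. Suppose for contradiction that $L(G) = M^*(K_5)$; by the closing remarks of Section 2 we may take $G$ connected, giving $|V(G)| = r(L(G)) = 6$ and $|E(G)| = 10$. Since $M^*(K_5)$ is $3$-connected, $L(G)$ has no coloops and no series pairs, so Lemma \ref{series-pair} transfers these properties to $G$: $G$ has no cut-edges, and every vertex has non-loop degree at least $3$. Moreover, since $M^*(K_5)$ has no circuits of size at most $3$, a short check on the circuit description of $L(G)$ shows that $G$ contains at most one loop, at most two parallel edges between any vertex pair, and no parallel pair at all if a loop is present. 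Splitting on the number of loops, the one-loop case reduces by a non-loop degree count to $G\setminus\ell$ being a cubic simple $6$-vertex graph, which up to isomorphism is either $K_{3,3}$ or the triangular prism; in either case $\ell$ together with a $4$-cycle of $G\setminus\ell$ is a minimal two-cycle subgraph, producing a $5$-element circuit in $L(G)$, contradicting the fact that $M^*(K_5)$ has only $4$- and $6$-element circuits.

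The loopless case is the main technical obstacle. There $G$ has degree sequence $(3,3,3,3,4,4)$ or $(3,3,3,3,3,5)$, and I plan to derive a contradiction by comparing the $5$ vertex-star $4$-circuits and $10$ bipartition $6$-circuits of $M^*(K_5)$ against the small circuits of $L(G)$, which by the circuit description of bicircular lift matroids must be theta subdivisions or unions of two edge-disjoint short cycles sharing at most one vertex. Letting $p$ denote the number of parallel pairs in $G$, the ``two $2$-cycle'' $4$-circuits already contribute $\binom{p}{2}$ elements toward the target count of $5$, constraining $p\in\{2,3\}$; a finite case check on the resulting configurations, matching both the $4$-circuit and $6$-circuit counts against the small degree sequences, will complete the contradiction.
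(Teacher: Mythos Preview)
Your outline is sound, but it is considerably longer than the paper's argument and leaves the loopless case as an unexecuted finite check. The paper's proof hinges on one structural fact about $M^*(K_5)$ that you do not exploit: any two of its five $4$-circuits (the vertex stars of $K_5$) meet in exactly one element, and each element lies in exactly two of them. Since in a loopless $G$ with no triple edges every $4$-circuit of $L(G)$ contains a parallel pair of $G$, the five $4$-circuits must involve five pairwise distinct parallel pairs, else two circuits share two elements; but then $P_1\cup P_2$ and $P_1\cup P_3$ are distinct $4$-circuits meeting in the two elements of $P_1$, a contradiction. The loop case falls just as fast: with a loop $\ell$ present and hence no parallel pairs, every $4$-circuit has the form $\ell\cup(\text{triangle})$, so all five contain $\ell$, yet $\ell$ lies in only two of them. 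Your cubic-graph classification for the loop case and your proposed $p\in\{2,3\}$ split together with the degree sequences and $6$-circuit count for the loopless case would both succeed, but neither is needed once the intersection property is invoked. Two minor points: your lower bound $p\ge 2$ is asserted without argument (it holds because $p\le 1$ forces all $4$-circuits to contain a common parallel pair, again contradicting the intersection bound); and part~(b) is immediate from the fact that $M^*(K_5)$ is already an excluded minor for the class of graphic matroids, so every proper minor is graphic and the planarity detour is unnecessary.
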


\begin{proof}
Since $M^*(K_5)$ is an excluded minor of the class of graphic matroids by Theorem \ref{ex-mi-of-bi}, it suffices to show that $M^*(K_5)$ has no graphic bicircular lift  representation. Assume to the contrary that $M^*(K_5)=L(G)$ for some graph $G$. Evidently, $|G|=6$ and $G$ has at most one loop. Since $M^*(K_5)$ has no triangle, each non-trivial parallel class of $G$ has exactly two edges. For each 4-element circuit $C$ in $M^*(K_5)$, the graph $G|C$ has three possible structures: (1) two non-trivial parallel classes, (2) a union of a triangle and a loop, and (3) a theta-subgraph with exactly four edges. Note that each element is in exactly two 4-element circuits in $M^*(K_5)$ and the element is the unique common element of the two 4-element circuits. Since $G$ can not have a loop and a 2-element parallel class at the same time, $G$ has no loops. That is, (1) or (3) happens, so the matroid $M^*(K_5)$ has two 4-element circuits with at least two common elements, which is not possible.
\end{proof}

\begin{lem}\label{K_{3,3}^*}
$M^*(K_{3,3})\in\mathcal{E}x$.
\end{lem}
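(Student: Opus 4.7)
The plan is to follow the proof of Lemma \ref{K_5^*}, but replacing $4$-element circuits (which worked for the $4$-regular graph $K_5$) by $3$-element circuits (available since $K_{3,3}$ is $3$-regular). By Theorem \ref{ex-mi-of-bi}, $M^*(K_{3,3})$ is an excluded minor of the class of graphic matroids, so it suffices to show that $M^*(K_{3,3})$ has no bicircular lift representation. I will assume $M^*(K_{3,3}) = L(G)$ for a connected graph $G$ and derive a contradiction.

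First I extract the basic parameters. Since $r(M^*(K_{3,3})) = 4$ and $|E(M^*(K_{3,3}))| = 9$, we have $|V(G)| = 4$ and $|E(G)| = 9$; and since $M^*(K_{3,3})$ is simple while two loops of $G$ would form a $2$-element circuit of $L(G)$, $G$ has at most one loop. The six triangles of $M^*(K_{3,3})$ are the vertex stars of $K_{3,3}$: pairwise they share at most one element, and each element of $M^*(K_{3,3})$ lies in exactly two of them.

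Next I classify the $3$-element circuits of $L(G)$. Each such circuit is either (A) three edges forming a parallel class, or (B) a loop of $G$ together with a parallel pair of edges (whether or not the loop is incident with that pair). In the loopless case every triangle is of type (A), and an edge in a parallel class of size $p$ lies in $\binom{p-1}{2}$ type-(A) triangles; the bound ``at most two triangles per element'' therefore forces $p \leq 3$, and producing $6$ triangles then requires $6$ parallel classes of size $3$, using $18 > 9$ edges --- a contradiction. In the one-loop case the loop sits in every type-(B) triangle, so $G$ has at most two parallel pairs; but then no parallel class can have size $\geq 3$ (such a class alone would contain $\binom{3}{2} = 3$ parallel pairs), hence there are no type-(A) triangles, and the total number of triangles is at most $2 < 6$ --- again a contradiction.

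The only delicate point I anticipate is verifying the classification of $3$-element circuits of $L(G)$ --- in particular, that a loop together with a parallel pair of edges remains a circuit even when the loop is at a vertex disjoint from that pair. Once that classification is correct, the two counting inequalities above close the proof cleanly.
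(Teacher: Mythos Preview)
Your proof is correct. Both you and the paper begin with the same setup---$|V(G)|=4$, at most one loop, and the classification of $3$-element circuits of $L(G)$ into (A) a triple of parallel links or (B) a loop plus a parallel pair---but the two proofs finish differently. The paper takes any element $e$, looks at the two triangles $C_1,C_2$ of $M^*(K_{3,3})$ through $e$ (which meet only in $e$), and argues that $L(G)|(C_1\cup C_2)\cong U_{2,5}$, contradicting the binariness of $M^*(K_{3,3})$. You instead run a counting argument: in the loopless case the bound $\binom{p-1}{2}\le 2$ forces every parallel class to have size at most $3$, so six type-(A) triangles would need $18$ edges; in the one-loop case the loop lies in every type-(B) triangle, so there are at most two parallel pairs, hence no type-(A) triangles at all, and at most two triangles in total. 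Your argument is slightly longer but entirely self-contained---it does not appeal to $M^*(K_{3,3})$ being $U_{2,5}$-free---whereas the paper's argument is shorter but leaves to the reader the verification that the chosen $C_1,C_2$ really do give a $U_{2,5}$-restriction in every subcase.
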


\begin{proof}
Since $M^*(K_{3,3})$ is an excluded minor of the class of graphic matroids by Theorem \ref{ex-mi-of-bi}, it suffices to show that $M^*(K_{3,3})$ has no graphic bicircular lift representation. Assume to the contrary that $M^*(K_{3,3})=L(G)$ for some graph $G$. Evidently, $|V(G)|=4$ and $G$ has at most one loop. For each 3-element circuit $C$ in $M^*(K_{3,3})$, either $G|C$ is a parallel class with exactly three edges or $G|C$ is a union of a 2-element parallel class and a loop. Moreover, since each element is in exactly two 3-element circuits, there are triangles $C_1, C_2$ of $M^*(K_{3,3})$ with exactly one common element such that $L(G)|C_1\cup C_2=U_{2,5}$, a contradiction as $M^*(K_{3,3})$ has no $U_{2,5}$-minors.
\end{proof}

Using a similar strategy as the proof of Lemma \ref{K_{3,3}^*} we can prove

\begin{lem}\label{F_7}
$F_7\in\mathcal{E}x$.
\end{lem}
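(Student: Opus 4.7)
The plan is to mimic the strategy of Lemma~\ref{K_{3,3}^*}: assume for contradiction that $F_7=L(G)$ for some graph $G$, and exhibit a $U_{2,5}$-restriction in $L(G)$, contradicting that $F_7$ is binary. First, since $r(F_7)=3$ and $F_7$ is connected, we have $|G|=3$; and since $F_7$ has no two-element circuit, $G$ has at most one loop (a pair of loops would form a two-element circuit of $L(G)$). As in the previous lemmas, for every three-element circuit $C$ of $L(G)$ the subgraph $G|C$ is either a three-edge parallel class or the union of a loop and a two-edge parallel class.

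Next I would pick any non-loop edge $e\in E(G)$; such an edge exists because $|E(G)|=7$ and $G$ has at most one loop. Let $u,v$ be its endpoints. In $F_7$ the element $e$ lies in exactly three triangles $T_1,T_2,T_3$, and each $T_i$ falls into one of two types: either $T_i=\{e,a,b\}$ is a three-edge parallel class at $uv$ (type A), or $T_i=\{e,a,\ell\}$ where $a$ is parallel to $e$ and $\ell$ is the unique loop of $G$ (type B). Because $G$ has at most one loop, at most one of $T_1,T_2,T_3$ can be of type B, so at least two of them, say $T_1$ and $T_2$, are of type A.

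Since $F_7$ is simple, $T_1\cap T_2=\{e\}$; writing $T_1=\{e,a_1,a_2\}$ and $T_2=\{e,a_3,a_4\}$, the four edges $a_1,a_2,a_3,a_4$ are pairwise distinct and all parallel to $e$ between $u$ and $v$. Hence $\{e,a_1,a_2,a_3,a_4\}$ is a parallel class of $G$ of size five, every three-subset of which is a circuit of $L(G)$, so $L(G)|\{e,a_1,a_2,a_3,a_4\}=U_{2,5}$. This produces a $U_{2,5}$-restriction of the binary matroid $F_7$, the desired contradiction. The only routine step is the classification of triangles through $e$ into the two types, which is immediate from the characterization of three-element circuits of $L(G)$ recorded in the setup.
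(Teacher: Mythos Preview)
Your argument is correct and follows exactly the strategy the paper indicates (``a similar strategy as the proof of Lemma~\ref{K_{3,3}^*}''): reduce to showing $F_7\notin BL$ via Theorem~\ref{ex-mi-of-bi}, classify the $3$-element circuits of $L(G)$ as $K_2^3$ or a $2$-cycle plus the unique loop, and use the incidence structure of the Fano plane (each point on three lines, any two lines meeting in one point) together with the at-most-one-loop constraint to force a $U_{2,5}$-restriction, contradicting binarity. The only cosmetic omission is the one-line reduction ``since $F_7$ is an excluded minor for graphic matroids, it suffices to show $F_7$ is not bicircular lift,'' which you allude to by invoking the strategy of Lemma~\ref{K_{3,3}^*}.
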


\begin{lem}\label{F_7^*}
$F_7^*\in\mathcal{E}x$.
\end{lem}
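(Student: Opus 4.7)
I would follow the same high-level strategy as Lemma \ref{K_{3,3}^*}: reduce to showing that $F_7^*$ has no bicircular lift representation and then exploit a combinatorial incompatibility between small cocircuits of $F_7^*$ and those of any $L(G)$. Since $F_7^*$ is an excluded minor of the graphic matroids by Theorem \ref{ex-mi-of-bi}, every proper minor of $F_7^*$ is graphic and hence lies in $\overline{BL}$; so it is enough to derive a contradiction from $L(G)=F_7^*$. This forces $|V(G)|=4$ and $|E(G)|=7$.

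The twist on Lemma \ref{K_{3,3}^*} is that the smallest circuits of $F_7^*$ have size $4$ rather than $3$, so instead of counting small circuits I plan to count small cocircuits: $F_7^*$ has exactly $7$ cocircuits of size $3$, namely the $3$-element lines of $F_7$. Using the lift-rank formula $r_{L(G)}(X)=|V(G|X)|-c(X)+\min(c_1(X),1)$, I would classify the rank-$3$ hyperplanes of $L(G)$ and conclude that, for $|V(G)|=4$, every $3$-element cocircuit of $L(G)$ is one of two kinds: (i) a link-star $st_G(v)\setminus \mathrm{loop}(G)$ of a vertex $v$ of link-degree $3$ for which $G-v$ contains a cycle, or (ii) a $3$-edge cut between a $2$-$2$ partition of $V(G)$ with at least one cyclic side.

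Before counting, I would use the absence of small circuits and cocircuits of $F_7^*$ to constrain $G$: $G$ has at most one loop, no parallel class of size $\geq 3$, and cannot simultaneously contain a loop and a parallel pair; and every vertex has link-degree at least $3$ (the non-parallel case is Lemma \ref{series-pair}, and a short check handles the parallel case by exhibiting a $2$-cocircuit $\{e,f\}$ of $L(G)$ at a degree-$2$ vertex). A handshake count then forces $G$ to be either $K_4$ plus a single loop, or a loopless graph with degree sequence $(5,3,3,3)$ or $(4,4,3,3)$.

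Finally I would bound the number of $3$-cocircuits case by case. Type (i) contributes at most one per vertex of link-degree exactly $3$ (at most four in all), and type (ii) contributes at most one per $2$-$2$ partition of $V(G)$. The three $2$-$2$ cut sizes sum to $2(|E(G)|-|\mathrm{loop}(G)|)\in\{12,14\}$, which, together with the ban on combining a loop with a parallel pair, sharply limits how many of them can equal $3$. The three case checks give totals $4$, $3$ and $3$, all strictly less than $7$, contradicting the $7$ cocircuits of size $3$ in $F_7^*$. I expect the main obstacle to be the careful hyperplane classification---particularly the correct handling of loops so that no $3$-cocircuit family is overlooked.
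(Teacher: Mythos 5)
Your proof is correct, but it establishes the key non-representability claim by a different mechanism than the paper. Both arguments start identically: by Theorem \ref{ex-mi-of-bi} every proper minor of $F_7^*$ is graphic, so it suffices to rule out $F_7^*=L(G)$, which forces $|V(G)|=4$. From there the paper works entirely with circuits: since every circuit of $F_7^*$ has size exactly $4$ (no triangles, no $5$-element circuits), it deduces that $G$ is loopless with all parallel classes of size at most two, pins $\si(G)$ down to a $4$-cycle or a triangle plus a cut-edge, and in each case exhibits a forbidden $5$-element circuit of $L(G)$. You instead count the seven $3$-element cocircuits of $F_7^*$ (the lines of $F_7$) against those of $L(G)$; your hyperplane classification is right --- a rank-$3$ flat of size $4$ in $L(G)$ must be $E(G-v)$ together with any loop at $v$, or a spanning two-component cyclic flat, so every $3$-element cocircuit is a full link-star of a link-degree-$3$ vertex or a full $2$-$2$ cut --- and the degree constraints (link-degree at least $3$ everywhere, at most one loop, no loop coexisting with a parallel pair, no parallel class of size three) leave only the three degree sequences you list. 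One point deserves emphasis: the crude bound of $4$ link-stars plus $3$ partitions gives exactly $7$, so the case analysis is genuinely needed; it does go through, since for $K_4$ plus a loop every $2$-$2$ cut has size $4$, and in the loopless cases the three $2$-$2$ cut sizes sum to $14$, so at most one can equal $3$, giving at most $4<7$ in every case. The paper's route is shorter and needs nothing beyond the definition of the circuits of $L(G)$; yours requires the lift-matroid rank function and careful handling of loops in the flat classification, but it is more systematic, and the same cocircuit-counting template adapts with little change to $M^*(K_5)$, $M^*(K_{3,3})$ and $F_7$.
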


\begin{proof}
Since $F_7^*$ is an excluded minor of the class of graphic matroids by Theorem \ref{ex-mi-of-bi}, it suffices to show that $F_7^*$ has no graphic bicircular lift representation. Assume to the contrary that $F_7^*=L(G)$ for some graph $G$. Evidently, $|V(G)|=4$ and $G$ has at most one loop. Since $F_7^*$ has no triangle, each non-trivial parallel class in $G$ has exactly two edges and if $G$ has a loop then $G$ has no non-trivial parallel class. Hence, if $G$ has a loop, then $G$ is a union of $K_4$ and a loop; so $L(G)$ has 5-element circuits, a contradiction as $F_7^*$ has no 5-element circuits. Hence, $G$ has no loops.  Moreover, since $F_7^*$ has no 5-element circuits and the simple graph $\si(G)$ of $G$ is connected, $\si(G)$ is a 4-element cycle or a union of a triangle $T$ and a cut-edge $e$, for otherwise some parallel class of $G$ has three elements. When $\si(G)$ is a 4-element cycle, since $G$ has at least one 2-element cycle, $F_7^*$ has a 5-element cycle, a contradiction. So the later case happens. Since $G$ is 2-edge-connected, there is an edge $f$ of $G$ such that $\{e,f\}$ is a cycle of $G$. Hence, $T\cup\{e,f\}$ is a 5-element circuit of $F_7^*$, which is not possible.
\end{proof}

By Theorem \ref{ex-mi-of-bi} and Lemmas \ref{K_5^*}-\ref{F_7^*}, we may assume that all excluded minors of $\overline{BL}$ and non-graphic bicircular lift  matroid  have a $U_{2,4}$-minor. The result will be used without reference.

A matroid is {\sl free} if it has no circuits. Let $N_1,N_2$ be matroids on disjoint sets. The {\sl direct sum} of $N_1,N_2$, denoted by $N_1\oplus N_2$, is defined on the ground set $E(N_1)\cup E(N_2)$ with $\mathcal{C}(N_1\oplus N_2)=\mathcal{C}(N_1)\cup\mathcal{C}(N_2).$ Let $G_1,G_2$ be vertex-disjoint graphs. The {\sl direct sum} of $G_1,G_2$, denoted by $G_1\oplus G_2$, is a graph with $V(G_1\oplus G_2)=V(G_1)\cup V(G_2)$ and $E(G_1\oplus G_2)=E(G_1)\cup E(G_2)$.

Recall that  $M$ is an excluded minor of $\overline{BL}$.

\begin{thm}\label{1-con}
Either $M$ is connected or $M$ is a direct sum of $U_{2,4}$ and a loop.
\end{thm}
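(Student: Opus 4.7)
The plan is to suppose $M$ is disconnected, write $M=M_1\oplus M_2$ with both summands nonempty, and conclude $M=U_{2,4}\oplus U_{0,1}$. Each $M_i$ is a proper minor of $M$, so $M_i\in\overline{BL}$. If both $M_1$ and $M_2$ were graphic, so would their direct sum $M$ be, contradicting $M\notin\overline{BL}$. Hence, after relabeling, $M_1$ is non-graphic, which together with $M_1\in\overline{BL}$ gives $M_1\in BL$; write $M_1=L(G_1)$. In particular $M_1$ is loopless and, by the remark following Lemma~\ref{F_7^*}, has a $U_{2,4}$-minor.

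I would then split on the structure of $M_2$. First, if $M_2$ is free (every element a coloop), I build a graph $G$ from $G_1$ by attaching $|E(M_2)|$ extra components, each a single edge joining two new vertices. Since these isolated edges are contained in no cycle of $G$, they become coloops of $L(G)$; hence $L(G)=M_1\oplus M_2=M$, so $M\in BL\subseteq\overline{BL}$, a contradiction.

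Second, suppose $M_2$ contains a circuit $C$ of size $n\ge 2$. Applying the minor operations that extract a $U_{2,4}$ from $M_1$, together with the deletion $M_2\setminus C$ on the other side, exhibits $U_{2,4}\oplus U_{n-1,n}$ as a minor of $M$. This matroid is not graphic (because of the $U_{2,4}$ summand) and not bicircular lift (it has two components with circuits), so it lies outside $\overline{BL}$; being a minor of the excluded minor $M$, it must coincide with $M$. But then contracting any element of the $U_{n-1,n}$ summand produces the proper minor $U_{2,4}\oplus U_{n-2,n-1}$, which still lies outside $\overline{BL}$ (a loop summand when $n=2$, a smaller circuit summand when $n\ge 3$), contradicting the excluded-minor hypothesis.

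The only remaining case is that $M_2$ contains a loop $\ell$. Then $M\setminus\ell\in\overline{BL}$ is a proper minor, and it cannot be graphic (otherwise $M=(M\setminus\ell)\oplus\{\ell\}$ would be graphic), so $M\setminus\ell\in BL$ is non-graphic and admits a $U_{2,4}$-minor $N$. Then $N\oplus\{\ell\}=U_{2,4}\oplus U_{0,1}$ is a minor of $M$ outside $\overline{BL}$ (the loop forbids bicircular lift and $U_{2,4}$ forbids graphic); since $M$ is an excluded minor, $N\oplus\{\ell\}=M$, giving the desired form. The main subtlety is Case~1, verifying that isolated edges adjoined to a bicircular-lift representation actually yield coloops; after that, the proof reduces to a clean sequence of minor reductions together with the two obvious obstructions to membership in $BL$, namely loops and multiple circuit-bearing components.
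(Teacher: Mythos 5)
Your proof is correct and follows essentially the same route as the paper's: rule out a free summand by a direct graph construction, then combine a $U_{2,4}$-minor in the non-graphic summand with a circuit in the other summand to exhibit $U_{2,4}\oplus U_{0,1}$ as a minor of $M$ and invoke minor-minimality. The only real difference is that your Case~2 reaches the loop indirectly, by first forming $U_{2,4}\oplus U_{n-1,n}$ and then contracting once more, whereas the paper contracts the circuit down to a loop in a single step; both arguments are sound.
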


\begin{proof}
Assume that $M=M_1\oplus M_2$ for some matroids $M_1,M_2$.  Assume that $M_1$ is free. Let $G_1$ be a tree with $E(G_1)=E(M_1)$, and let $G_2$ be a graphic representation or graphic bicircular lift representation of $M_2$. Then $G_1\oplus G_2$ is a graphic representation or graphic bicircular lift representation of $M$, a contradiction. So neither $M_1$ nor $M_2$ is free. 

Since $M$ has a $U_{2,4}$-minor,  one of $M_1$ and $M_2$ (say $M_1$) has a $U_{2,4}$-minor. Since $M_2$ has a circuit, $M$ contains a minor as a direct sum of $U_{2,4}$ and a loop. Moreover, since a direct sum of $U_{2,4}$ and a loop is in $\mathcal{E}x$, the matroid $M$ is a direct sum of $U_{2,4}$ and a loop. 
\end{proof}

%Assume that neither $M_1$ nor $M_2$ is a loop. Then there is a proper minor $N_1$ of $M_1$ such that $N_1$ has circuits. Since bicircular lift  matroid  can not have two components with circuits and $M$ is an excluded minor of $\overline{BL}$, the matroid $M_2$ is graphic. By symmetry $M_1$ is also graphic. So $M$ is graphic, a contradiction. Hence, by symmetry  we may assume that $M_2$ is a loop. Hence, $N_1\oplus M_2$ is not bicircular lift for each proper minor $N_1$ of $M_1$, so $N_1$ is graphic. Therefore, $M_1$ is graphic or an excluded minor of the class of graphic matroids. Since $M_2$ is a loop, $M_1$ is an excluded minor of the class of graphic matroids. By Theorem \ref{ex-mi-of-bi} and Lemmas \ref{K_5^*}-\ref{F_7^*} we have $M_1=U_{2,4}$. So the result holds.

(The second paragraph of the proof of Theorem \ref{1-con} was given by the referee, which is much simpler than the one that the author gave.) Therefore, in the rest of the paper, we assume that $M$ is connected.

\section{The $2$-connected Case}
In this section, we prove that if an excluded minor $M$ is connected then it is 3-connected.

\begin{lem}\label{series-class-1}
Let $\{e,f\}$ be a series pair of a matroid $N$. Then $N/e$ is in $\overline{BL}$ if and only if $N$ is in $\overline{BL}$.
\end{lem}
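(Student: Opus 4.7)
The forward direction is immediate from the fact, noted in the introduction, that $\overline{BL}$ is a minor-closed class, so $N \in \overline{BL}$ yields $N/e \in \overline{BL}$.

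For the converse, my plan is to construct a representation of $N$ by subdividing an edge in a representation of $N/e$. Suppose $N/e \in \overline{BL}$, and let $G'$ be a graph with either $M(G') = N/e$ or $L(G') = N/e$. The element $f \in E(N) \setminus \{e\}$ corresponds to some edge of $G'$, which I will also call $f$. Define $G$ to be the graph obtained from $G'$ by subdividing $f$: insert a new vertex $v$ in the interior of $f$, replacing $f$ by a new edge $e$ (from one endpoint of the original $f$ to $v$) together with an edge still called $f$ (from $v$ to the other endpoint), all other edges of $G$ retaining their labels from $G'$. The claim is that $M(G) = N$ in the graphic case and $L(G) = N$ in the bicircular lift case, placing $N$ in $\overline{BL}$.

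The verification uses two matching facts. Since $\{e,f\}$ is a series pair of $N$, every circuit of $N$ contains both or neither of $e, f$, and the map $C \mapsto C \setminus \{e\}$ is a bijection between the circuits of $N$ and those of $N/e$, sending circuits containing $f$ in $N/e$ to circuits containing $\{e,f\}$ in $N$. In $G$, the new vertex $v$ has degree $2$, so every cycle of $G$ contains both $e, f$ or neither. In the graphic case, the standard fact that subdividing an edge preserves the cycle matroid up to the new series pair then gives $M(G) = N$ directly. In the bicircular lift case, the minimal subgraphs of $G$ containing at least two cycles split into those avoiding $\{e,f\}$ (which match the circuits of $L(G')$ avoiding $f$, via $G - \{e,f\} = G' - f$ up to an isolated vertex) and those containing $\{e,f\}$ (which, after contracting the series pair, correspond bijectively to the circuits of $L(G')$ containing $f$, with $e$ adjoined). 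Matching these against the circuits of $N$ via the series-pair bijection then yields $L(G) = N$.

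The main obstacle is the bookkeeping in the bicircular lift case: one must verify that no minimal two-cycle subgraph of $G$ can contain exactly one of $e, f$ (it cannot, because the other would become a pendant edge contradicting minimality), and that the ``two edge-disjoint cycles'' and ``theta-subgraph'' structures behave correctly when the edge $f$ is split by a degree-$2$ vertex. Boundary situations such as $f$ being a loop of $G'$, or $L(G')$ having only one circuit, are handled by the same case analysis, since the subdivision always produces a degree-$2$ vertex $v$ through which every cycle must traverse both $e$ and $f$.
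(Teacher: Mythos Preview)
Your proposal is correct and follows essentially the same approach as the paper: take a (graphic or bicircular lift) representation $G'$ of $N/e$ and subdivide the edge $f$ to obtain a representation of $N$. The paper states this in one sentence without verification, while you spell out the circuit bijection in both cases; the extra bookkeeping you flag (loops, a single-circuit $L(G')$) is handled by the same degree-$2$-vertex argument and is not needed at the paper's level of detail.
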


\begin{proof}
Evidently, it suffices to prove the ``only if" part. Let $G$ be a graphic or bicircular lift representation of $N/e$. Then the graph obtained from $G$ by replacing $f$ with a 2-edge-path labelled by $\{e,f\}$ is a graphic or graphic bicircular lift representation of $N$. So $N$ is in $\overline{BL}$.
\end{proof}

By Lemma \ref{series-class-1} we have

\begin{cor}\label{series-class}
$M$ has no series classes.
\end{cor}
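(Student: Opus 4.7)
The plan is to deduce Corollary \ref{series-class} directly from Lemma \ref{series-class-1} by a one-line contradiction argument, using only the definition of an excluded minor.

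Suppose for contradiction that $M$ contains a non-trivial series class, so in particular there is a series pair $\{e,f\}\subseteq E(M)$. Since $M$ is an excluded minor of $\overline{BL}$, every proper minor of $M$ lies in $\overline{BL}$; in particular $M/e\in \overline{BL}$. Lemma \ref{series-class-1} then gives $M\in\overline{BL}$, contradicting $M\in\mathcal{E}x$.

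There is no real obstacle here: the content is entirely in Lemma \ref{series-class-1}, which has already been established. The only mild subtlety is to notice that ``series class'' in the statement must be read as ``non-trivial series class'' (a single element is trivially its own series class under the definition given in Section~2), so that the existence of a series class really does supply a series pair to feed into Lemma \ref{series-class-1}. Once that is observed, the corollary is immediate.
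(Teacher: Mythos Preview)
Your proof is correct and matches the paper's approach exactly: the paper simply writes ``By Lemma \ref{series-class-1} we have'' and states the corollary, which is precisely the contradiction argument you spell out. Your remark about reading ``series class'' as ``non-trivial series class'' is also apt given the paper's definitions.
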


\begin{lem}\label{K_4}
The graphic matroid of a subdivision of $K_4$ is not bicircular lift.
\end{lem}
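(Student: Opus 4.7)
The plan is to assume for contradiction that $M(H) = L(G)$ for some graph $G$, where $H$ is a subdivision of $K_4$, and first to reduce by series contraction to the case $H = K_4$. Since $H$ is $2$-edge-connected, $M(H) = L(G)$ is connected, so we may take $G$ to be $2$-edge-connected with at least two cycles. By Remark~\ref{remark:series-pair}, the series pairs of $L(G)$ coincide with those of $G$, and they match the series pairs of $M(H)$, which correspond to the subdivided-edge paths of $H$. Every circuit of $L(G)$ (a theta or a pair of cycles sharing at most one vertex) meets each series class in either all or none of its elements, so contracting series pairs commutes with the bicircular-lift construction, giving $L(\co(G)) = \co(L(G)) = \co(M(H)) = M(K_4)$. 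Setting $G' = \co(G)$, it now suffices to show that no graph $G'$ on $3$ vertices and $6$ edges satisfies $L(G') = M(K_4)$.

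Next, I observe that $G'$ has at most one loop, for otherwise two loops would form a $2$-element circuit of $L(G')$, while $M(K_4)$ has none. Let $p, q, r$ denote the edge-multiplicities between the three pairs of vertices of $G'$. If $G'$ has no loop, then $p + q + r = 6$ and the only $3$-element circuits of $L(G')$ are triples of parallel edges; the equation $\binom{p}{3} + \binom{q}{3} + \binom{r}{3} = 4$ forces $(p, q, r) \in \{(4, 2, 0), (4, 1, 1)\}$ up to permutation. If $G'$ has exactly one loop, then $p + q + r = 5$ and the $3$-element circuits come from triples of parallel edges or from the loop together with any parallel pair; the equation $\binom{p}{2} + \binom{q}{2} + \binom{r}{2} + \binom{p}{3} + \binom{q}{3} + \binom{r}{3} = 4$ forces $(p, q, r) = (3, 1, 1)$ up to permutation.

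In each of these three remaining subcases, the final step is to count the $4$-element circuits of $L(G')$, which arise only from unions of two edge-disjoint $2$-cycles sharing exactly one vertex, from thetas with path-lengths $1, 1, 2$, and, when a loop is present, from loop-plus-triangle configurations. A direct enumeration yields $6$ four-element circuits in every subcase, contradicting the fact that $M(K_4)$ has exactly three $4$-element circuits.

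The main obstacle is the $4$-element circuit bookkeeping across the three subcases, where one must verify that in each of $(4, 2, 0)$, $(4, 1, 1)$, and $(3, 1, 1)$ the various routes to $4$-element circuits total exactly $6$; the reduction $\co(L(G)) = L(\co(G))$ is routine given Remark~\ref{remark:series-pair} and the circuit description of $L(G)$.
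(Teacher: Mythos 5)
Your proof is correct, and after the common reduction to $K_4$ it takes a genuinely different route from the paper's. Both arguments first contract the subdivision edges (you via $\co(L(G))=L(\co(G))$, the paper by appealing to Lemma~\ref{series-class-1}) and both observe that a $3$-element circuit of $L(G')$ must be realized as a parallel triple or as a loop together with a parallel pair. From there the paper argues structurally: since $G'$ has at most one loop and each edge of $K_4$ lies in exactly two triangles, at least two of the four triangles are realized as parallel triples, and any two triangles of $K_4$ meet in one edge, so their union is a $U_{2,5}$-restriction of $M(K_4)$, which is impossible. You instead enumerate the multiplicity profiles compatible with having exactly four $3$-element circuits --- $(4,2,0)$, $(4,1,1)$, and $(3,1,1)$ with a loop --- and then count $4$-element circuits, obtaining $6$ in each case against the three $4$-cycles of $K_4$; I checked these counts and they are right. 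Your version is more computational but entirely self-contained, while the paper's is shorter and avoids the bookkeeping; note that each of your three surviving profiles already contains a $4$-element set on which $L(G')$ restricts to $U_{2,4}$ (the four parallel edges, or the parallel triple together with the loop), so you could have stopped there, since no two triangles of $M(K_4)$ share two elements. One small point worth making explicit in either write-up is the identity $L(G/e)=L(G)/e$ for a link $e$ in a series pair, which underlies your claim $\co(L(G))=L(\co(G))$; your observation that every cycle of $G$, hence every circuit of $L(G)$, meets each series class in all or none of its elements is exactly what is needed to justify it.
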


\begin{proof}
By Lemma \ref{series-class-1} it suffices to show that $M(K_4)$ is not bicircular lift. Assume to the contrary that $M(K_4)=L(G)$ for some graph $G$. For each triangle $C$ in $K_4$, the graph $G|C$ is  a theta-graph or a union of a loop and a 2-element cycle. Moreover, since each edge of $K_4$ is in exactly two triangles and $G$ has at most one loop, there are triangles $C_1, C_2$ of $K_4$ with exactly one common element such that $L(G)|C_1\cup C_2=U_{2,5}$, a contradiction as $M(K_4)$ has no $U_{2,5}$-minors.
\end{proof}

For an integer $n\geq2$, let $K_2^n$ be the graph obtained from $K_2$ with its unique edge replaced by $n$ parallel edges.  A subdivision of $K_2^3$ is a {\sl theta-graph}.

\begin{lem}\label{graph}
If the graphic matroid of a $2$-edge-connected graph $H$ is bicircular lift, then $H$ is a subdivision of $K_2^n$ for some integer $n\geq2$.
\end{lem}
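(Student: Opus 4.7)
The plan is to argue by contradiction: suppose $M(H)=L(G)$ for some graph $G$ but $H$ is not a subdivision of $K_2^n$ for any $n\geq 2$. First I reduce to the case where $H$ is $2$-connected. Each block of the $2$-edge-connected graph $H$ contains a cycle and therefore contributes a component of $M(H)=L(G)$ with circuits; since at most one component of $L(G)$ may contain circuits, $H$ must have a single block, so $H$ is $2$-connected. I then pass to the topological simplification $J=\co(H)$ obtained by suppressing every degree-$2$ vertex of $H$: it is $2$-connected with minimum degree at least $3$, and $H$ is a subdivision of $K_2^n$ if and only if $|V(J)|\leq 2$. I may therefore assume $|V(J)|\geq 3$ and work for a contradiction.

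The central idea is to exhibit a subdivision of $K_4$ as a subgraph of $H$: such a subdivision $K\subseteq H$ yields $M(K)=L(G\setminus(E(G)\setminus E(K)))$, since edge deletion in $L(\cdot)$ corresponds to edge deletion in the underlying graph, so $M(K)$ would be bicircular lift, contradicting Lemma \ref{K_4}. When $J$ is simple, the classical theorem that every simple graph of minimum degree $\geq 3$ contains a subdivision of $K_4$ (Dirac) applied to $J$ gives such a subdivision inside $J\subseteq H$, and the contradiction is immediate.

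The main obstacle is the multigraph case, where a parallel pair in $J$ can destroy Dirac-type arguments; indeed a $2$-connected multigraph with minimum degree $\geq 3$ can fail to contain a $K_4$-subdivision (witness the $4$-vertex multigraph with two disjoint doubled edges and two connecting single edges). To handle it, I would pick a parallel pair $\{e,f\}$ of $J$ corresponding to two internally disjoint $u$-$v$ paths $P_e,P_f$ of $H$ between branch vertices $u,v$, and use $2$-connectivity of $H$ together with the existence of a third branch vertex to build a third route attached to $P_e\cup P_f$ and complete a $K_4$-subdivision in $H$. In the residual configurations where this geometric construction fails, I would instead analyze the hypothetical graph $G$ directly: since $L(G)=M(H)$ is connected we have $|V(G)|=|V(H)|-1$, and the complete classification of small circuits of $L(G)$ (pairs of parallel edges, pairs of loops, a loop plus a parallel pair, a triple-parallel class, or a $4$-edge theta) severely constrains $G$, so a case analysis along the lines of Lemmas \ref{K_5^*}--\ref{F_7^*} should force either a $U_{2,5}$-restriction or another impossible configuration. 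The hard part, which I expect to require the bulk of the work, is making this multigraph case analysis exhaustive and uniform across the ways in which $J$ can carry several parallel pairs or vertices of degree $\geq 4$.
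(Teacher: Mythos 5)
Your reduction to the $2$-connected case is sound (via matroid components, a slightly different route from the paper's), and your use of Lemma \ref{K_4} to exclude $K_4$-subdivisions is exactly right; Dirac's theorem then does dispose of the case where the suppressed graph $J$ is simple. But the multigraph case, which you yourself flag as ``the hard part'', is where the entire content of the lemma lives --- the graphs being characterized, subdivisions of $K_2^n$, are precisely those whose suppression is non-simple --- and your proposal does not prove it. You describe a geometric construction that you concede fails on configurations such as two vertex-disjoint doubled edges joined by two links, and for the residual configurations you only express the expectation that a direct case analysis of $G$ ``should force'' a contradiction. That is a plan, not an argument, and making such an analysis exhaustive over all ways $J$ can carry parallel classes is genuinely nontrivial; as it stands the proof is incomplete.

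The missing idea, which the paper uses and which collapses the whole multigraph case at once, is this: $H$ cannot contain two cycles $C_1,C_2$ meeting in at most one vertex. Indeed, the only cycles of $H$ inside $C_1\cup C_2$ are $C_1$ and $C_2$ themselves, so no circuit of $M(H)$ contained in $C_1\cup C_2$ meets both. On the other hand, each $C_i$ is a circuit of $L(G)$, so $G|C_i$ is a theta-subgraph or a pair of cycles sharing at most one vertex; choosing a cycle $D_i$ of $G$ with $D_i\subseteq C_i$ for each $i$, the set $D_1\cup D_2$ contains two distinct cycles of $G$ and hence contains a circuit of $L(G)$, and since neither $D_1$ nor $D_2$ alone contains such a circuit, that circuit meets both $C_1$ and $C_2$ --- a contradiction. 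This single observation kills your problematic example (the two doubled edges are vertex-disjoint cycles of $H$) and every other non-simple configuration, and it also forces $2$-connectivity. With it in hand, a short ear-decomposition argument shows that $H$ is either a subdivision of $K_2^n$ or contains a $K_4$-subdivision, the latter being excluded by Lemma \ref{K_4}; you then do not need Dirac's theorem or any case analysis of $G$ at all.
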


\begin{proof}
Assume $M(H)=L(G)$ for some graph $G$. Since bicircular lift  matroid  can not have loops, $H$ has no loops. Assume that there are cycles $C_1,C_2$ in $H$ with at most one common vertex. Then there is no circuit in $M(H)$ contained in $C_1\cup C_2$ and intersecting with $C_1$ and $C_2$.  On the other hand, since $G|C_i$ is a theta-graph or a union of two cycles with at most one common vertex for each $1\leq i\leq2$, some circuit $C$ in $L(G)$ intersects $C_1,C_2$ with $C\subseteq C_1\cup C_2$,  a contradiction. So $H$ hs no such cycles $C_1,C_2$, implying that $H$ is 2-connected as $H$ is 2-edge-connected.

Let $C$ be a cycle of $H$. Assume $H\neq C$. Since $H$ is 2-connected, there is a path $P$ such that $C\cup P$ is a theta-subgraph. When $H\neq C\cup P$, since no two cycles in $H$ have at most one common vertex, either $H$ is $K_2^n$-subdivision for some integer $n\geq4$ or $H$ contains a $K_4$-subdivision. By Lemma \ref{K_4} the graph $H$ is a subdivision of $K_2^n$.
\end{proof}

Let $G_1, G_2$ be vertex-disjoint graphs with $\{e\}=E(G_1)\cap E(G_2)$ and such that $e$ is a loop of $G_1, G_2$. Let $G_1 \oplus_1^e G_2$ be the graph obtained from $G_1$ and $G_2$ by identifying their end-vertices of $e$ and deleting $e$. We say that $G_1 \oplus_1^e G_2$ is a {\sl $1$-sum} of $G_1\del e$ and $G_2\del e$, which will be used in Section 5. Note that we do not define $G_1 \oplus_1^e G_2$ being a  $1$-sum of $G_1$ and $G_2$.

Let $N_1,N_2$ be matroids with $\{e\}=E(N_1)\cap E(N_2)$. Assume that $e$ is neither a loop nor a coloop of $N_1,N_2$. Let $N_1\oplus_2^e N_2$ or $N_1\oplus_2 N_2$ be the matroid with ground set $(E(N_1)\cup E(N_2))-e$ and
\[\begin{aligned}
\mathcal{C}(N_1\oplus_2^e N_2)=&\mathcal{C}(N_1\del e)\cup\mathcal{C}(N_2\del e)\\
&\cup\{(C_1-e)\cup(C_2-e)|\ e\in C_i\in\mathcal{C}(N_i)\ \text{for\ each}\ i\}.
\end{aligned}\]
We say that $N_1 \oplus_2^e N_2$ or $N_1\oplus_2 N_2$ is a {\sl $2$-sum} of $N_1$ and $N_2$.

Let $k$ be a positive integer. A partition $(X,Y)$ of the ground set of a matroid $N$ is a {\sl $k$-separation} if $|X|,|Y|\geq k$ and $r(X)+r(Y)-r(N)\leq k-1$.

\begin{lem}\label{2-con-}
Let $N, N_1,N_2$ be matroids with $N=N_1\oplus_2^e N_2$. If $N$ is bicircular lift, then there are graphs $G_1,G_2$ with $N=L(G_1\oplus_1^e G_2)$ and with $N_i=L(G_i)$ for each $1\leq i\leq2$.
\end{lem}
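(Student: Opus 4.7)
The plan is to read the desired decomposition off any bicircular lift representation of $N$. First, fix a connected $2$-edge-connected graph $G$ with $L(G) = N$, which exists since $N$ is a connected bicircular lift matroid with at least two circuits. Set $X = E(N_1) \setminus \{e\}$ and $Y = E(N_2) \setminus \{e\}$, so that $(X, Y)$ is the $2$-separation of $N = L(G)$ induced by the $2$-sum decomposition.

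The key tool is the rank formula for bicircular lift matroids: for every $Z \subseteq E(G)$ one has $r_{L(G)}(Z) = |V(G|Z)| - t(G|Z)$, where $t(G|Z)$ counts the components of $G|Z$ that are trees. This holds because a maximum independent subset of $Z$ takes each component of $G|Z$ as spanning unicyclic whenever that component carries a cycle, and as the spanning tree otherwise. Combined with $r(N) = |V(G)|$ and $r(X) + r(Y) - r(N) = 1$, the formula yields
\[ |V(G|X) \cap V(G|Y)| \;=\; 1 + t(G|X) + t(G|Y). \]

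In the generic case $t(G|X) = t(G|Y) = 0$, the two subgraphs share exactly one vertex $v$. Defining $G_i$ to be the appropriate side together with a new loop labelled $e$ attached at $v$, we obtain $G = G_1 \oplus_1^e G_2$ by construction. To verify $L(G_i) = N_i$, I would split the circuits by whether they use $e$. Circuits of $L(G_i) \setminus e$ coincide with the circuits of $L(G)$ lying entirely in $X$ (resp.\ $Y$), and these are exactly $\mathcal{C}(N_i \setminus e)$ by the $2$-sum formula. Circuits of $L(G_i)$ containing $e$ are precisely $\{e\} \cup D$ for cycles $D$ of $G_i \setminus e$; via the identity $(C_1 - e) \cup (C_2 - e) = D_1 \cup D_2$ these correspond to the crossing circuits of $N$. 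Because the cut in $G$ is a single vertex, no cycle and no theta-subgraph of $G$ straddles the partition, so the correspondence is exhaustive.

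The main obstacle is the degenerate case where $G|X$ or $G|Y$ contains tree components, which happens when $N_i \setminus e$ is close to free. The identity then forces $|V(G|X) \cap V(G|Y)| \geq 2$, and the matroid $2$-separation no longer corresponds to a single vertex cut in the given $G$. Here I would use the $2$-edge-connectedness of $G$, which pins the leaves of tree components to the opposite side, together with Whitney $2$-isomorphism (or Theorem~\ref{non-eq-repre} when $|\co(G)| \geq 5$) to replace $G$ by a $2$-isomorphic graph that absorbs the tree piece into the other side, reducing to the generic configuration. A secondary check is that the $2$-sum formula genuinely produces no theta-type crossing circuits, which rules out alternative cut structures with two or more shared vertices and confirms that a single-vertex cut is the only possibility consistent with the hypothesis.
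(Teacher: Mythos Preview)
Your rank formula is wrong: you have written the rank function of the bicircular matroid, not the bicircular \emph{lift} matroid. In $L(G)$ an independent set carries at most one cycle \emph{in total}, not one per component; so when $G|Z$ contains a cycle the correct value is $r_{L(G)}(Z)=|V(G|Z)|-c(G|Z)+1$, with $c$ the total number of components. Since both $N_1\del e$ and $N_2\del e$ have circuits (neither $e$ is a coloop), the corrected computation gives
\[
|V(G|X)\cap V(G|Y)| \;=\; c(G|X)+c(G|Y)-1,
\]
so the obstruction to a one-vertex cut is multiple components on a side, not tree components. Your ``degenerate case'' analysis is aimed at the wrong phenomenon, and the proposed remedy via Whitney switching or Theorem~\ref{non-eq-repre} does not obviously reduce the component count.

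The paper's argument avoids all of this with one extra line you are missing: rather than fixing an arbitrary $G$, choose $G$ among all connected representations so that $c:=|V(G|X)\cap V(G|Y)|$ is minimum. Minimality forces every component of $G|X$ (and of $G|Y$) to meet the other side in at least two vertices, hence $c\ge 2\max\{c(G|X),c(G|Y)\}\ge c(G|X)+c(G|Y)$. Plugging into the identity above gives $r(X)+r(Y)-r(N)=c-c(G|X)-c(G|Y)+2\ge 2$, contradicting the $2$-separation unless $c=1$. Once $c=1$, your construction of $G_1,G_2$ is fine; the paper verifies $N_i=L(G_i)$ by contracting a crossing circuit down to a loop rather than by your circuit-splitting sketch, but either route works once the single-vertex cut is in hand.
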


\begin{proof}
Since every bicircular lift   matroid  has at most one component having circuits, without loss of generality we can assume that $N$ is connected. So $N_1,N_2$  are connected. Let $G$ be a connected graph with $N=L(G)$ and with $V(G|E(N_1)-e)\cap V(G|E(N_2)-e)$ as small as possible. Set $c:=|V(G|E(N_1)-e)\cap V(G|E(N_2)-e)|$. Assume that $c=1$. Let $G^+$ be the graph obtained by adding a loop  labelled $e$ incident with the vertex shared by $E(N_1)-e$ and $E(N_2)-e$ in $G$. Let $G_i=G^+|E(N_i)$ for each integer $1\leq i\leq2$. We claim that $N_i=L(G_i)$. Let $C$ be a circuit of $N$ intersecting $E(N_1)-e$ and $E(N_2)-e$, and let $e_{3-i}\in C\cap(E(N_{3-i})-e)$. Since $N=L(G)$, we have that $C\cap E(N_{3-i})$ is a cycle of $G_{3-i}$; so $G_i$ is isomorphic to $(G/C\cap (E(N_{3-i})-e_{3-i}))|E(N_i)-e+e_{3-i}$. Moreover, since $N_i\cong (N/C\cap (E(N_{3-i})-e_{3-i}))|E(N_i)-e+e_{3-i}$, the claim holds. 

Now we prove that $c=1$. Assume otherwise. Then $c\geq2$ as $G$ is connected. For $1\leq i\leq 2$, set $n_i:=|G|E(N_i)-e|$ and let $c_i$ be the number of components in $G|E(N_i)-e$. Since $G$ is connected and has cycles, $r(N)=n_1+n_2-c$. Since $G$ is chosen with $c$ as small as possible, each component of $G|E(N_i)-e$ shares at least two vertices with $G|E(N_{3-i})-e$. So $c\geq 2max\{c_1,c_2\}$. On the other hand, since $N_i$ has circuits, $G|E(N_i)-e$ has cycles; so $r(E(N_i)-e)=n_i-c_i+1$. Then 
\[\begin{aligned}
&r(E(N_1)-e)+r(E(N_2)-e)-r(N)\\
=\ &n_1-c_1+1+n_2-c_2+1-(n_1+n_2-c)\\
=\ &c-c_1-c_2+2\geq2.
\end{aligned}\]
Hence, $(E(N_1)-e,E(N_2)-e)$ is not a 2-separation of $N$, a contradiction. So $c=1$. 
\end{proof}

We need three more results to prove the main result of the section. 

\begin{lem}\label{con-minor}(\cite{Oxley}, Proposition 4.3.7.)
Let $N_1$ be a connected minor of a connected matroid $N$ and $f\in E(N)-E(N_1)$. Then at least one of $N\del f$ or $N/f$ is connected having $N_1$ as a minor.
\end{lem}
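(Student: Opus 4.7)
The plan is to combine two ingredients: the fact that $f$ must lie in the contraction set or deletion set of any minor representation of $N_1$ inside $N$, together with Tutte's connectivity lemma stating that at least one of $N\del f$ and $N/f$ is connected whenever $N$ is connected (on at least two elements, which holds since $N_1$ is a proper minor).

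I would begin by writing $N_1 = (N/C)\del D$ for disjoint sets $C, D \subseteq E(N) \setminus E(N_1)$; since $f \in E(N) \setminus E(N_1)$ lies in $C \cup D$, the matroid $N_1$ is automatically a minor of at least one of $N/f$ and $N\del f$. If that same matroid is also connected, we are done, so the substantive case is when $N_1$ is a minor of only one of the two, say $N\del f$, and $N\del f = A \oplus B$ is disconnected. Since $N$ is connected, the partition $(E(A), E(B)\cup\{f\})$ is not a $1$-separation of $N$, which forces $f \notin \cl_N(E(A))$; by symmetry $f \notin \cl_N(E(B))$. Because $N_1$ is connected and its ground set avoids $f$, its edges lie entirely on one side, say $E(A)$. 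A direct rank computation then shows that $(N/f)\del E(B)$ and $N|E(A) = A$ have the same rank function, because $f \notin \cl_N(X)$ for every $X \subseteq E(A)$. Consequently $N_1$ is also a minor of $N/f$, contradicting the assumption that only $N\del f$ has $N_1$ as a minor.

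With that case ruled out, Tutte's lemma supplies the conclusion: whichever of $N\del f, N/f$ is connected, the preceding transfer argument (or its symmetric twin with the roles of deletion and contraction swapped) guarantees that the connected one still has $N_1$ as a minor. The main technical hurdle is the transfer step, and specifically the identification $(N/f)\del E(B) = A$ as matroids on $E(A)$; this is where the connectedness of $N$ enters in an essential way, through the strict positivity of the connectivity function of $N$ across the partition $(E(A), E(B)\cup\{f\})$, which is precisely what prevents $f$ from being absorbed into $\cl_N(E(A))$ or $\cl_N(E(B))$.
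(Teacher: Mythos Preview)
The paper does not prove this lemma; it is simply quoted from Oxley's textbook (Proposition 4.3.7) without argument. So there is no ``paper's proof'' to compare against, and your proposal stands on its own.

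Your argument is essentially correct. Two small remarks. First, from the partition $(E(A), E(B)\cup\{f\})$ failing to be a $1$-separation you actually deduce $f\notin\cl_N(E(B))$, not $f\notin\cl_N(E(A))$; the latter comes from the other partition $(E(A)\cup\{f\},E(B))$. Since you need both and you do invoke symmetry, the conclusion is unaffected. Second, the ``symmetric twin'' for the case where $N_1$ is a minor of $N/f$ and $N/f$ is disconnected is not literally the same closure computation with deletion and contraction interchanged: if $N/f=A'\oplus B'$ then connectedness of $N$ forces $f\in\cl_N(E(A'))$ rather than $f\notin\cl_N(E(A'))$, so the transfer identity changes form. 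The cleanest way to handle that case is to dualize: apply your deletion-side argument to $N^*$, $N_1^*$, and $f$, using that connectivity and the minor relation are preserved under duality and that $(N/f)^*=N^*\backslash f$. With that adjustment the proof is complete.
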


Bixby \cite{Bixby} proved that

\begin{lem}\label{Bixby}(\cite{Oxley}, Proposition 12.3.7.)
Let $N$ be a connected matroid having a $U_{2,4}$-minor and $e\in E(N)$. Then $N$ has a $U_{2,4}$-minor minor using $e$.
\end{lem}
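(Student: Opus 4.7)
The plan is to argue by induction on $|E(N)|$, using Lemma \ref{con-minor} to shrink the ground set whenever there is slack. First I would dispose of the trivial base $|E(N)|=4$: since $N$ is connected with a $U_{2,4}$-minor, the only possibility is $N\cong U_{2,4}$ itself, and any $e\in E(N)$ lies in this minor.

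For the inductive step, fix an arbitrary $U_{2,4}$-minor $N_1$ of $N$. If $e\in E(N_1)$ there is nothing to prove, so assume $e\notin E(N_1)$. The key observation is that when $|E(N)|\geq 6$ the set $E(N)-E(N_1)$ has at least two elements, so I can choose some $f\in E(N)-E(N_1)$ with $f\neq e$. Applying Lemma \ref{con-minor} to the connected minor $N_1$ of $N$ and to this $f$, at least one of $N\setminus f$ or $N/f$ is connected and still contains $N_1$ as a minor; call this matroid $N'$. Then $N'$ is connected, has a $U_{2,4}$-minor, contains $e$, and is strictly smaller than $N$, so the inductive hypothesis applied to $(N',e)$ yields a $U_{2,4}$-minor of $N'$ using $e$, which is also a $U_{2,4}$-minor of $N$ using $e$.

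The main obstacle is the residual case $|E(N)|=5$ with $E(N)-E(N_1)=\{e\}$, forcing $N\setminus e\cong U_{2,4}$ or $N/e\cong U_{2,4}$. Since $U_{2,4}$ is self-dual, connectedness is preserved under duality, and $U_{2,4}$-minors using $e$ correspond under duality, I may reduce to $N\setminus e\cong U_{2,4}$. Connectedness of $N$ prevents $e$ from being a loop or a coloop, so $N$ has rank $2$ on $5$ elements extending $U_{2,4}$ by $e$. The only possibilities are that $e$ is added in general position, giving $N\cong U_{2,5}$, or that $e$ is placed parallel to some element $f\in E(N)-\{e\}$. In the first subcase $N\setminus g\cong U_{2,4}$ contains $e$ for any $g\neq e$; in the second, $N\setminus f\cong U_{2,4}$ contains $e$. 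Either way I produce a $U_{2,4}$-minor of $N$ using $e$, completing the proof.
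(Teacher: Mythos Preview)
The paper does not include its own proof of this lemma; it is cited as Proposition~12.3.7 of Oxley's book and attributed to Bixby. Your argument is correct and self-contained: the induction via Lemma~\ref{con-minor} cleanly handles all cases with $|E(N)|\ge 6$, the duality reduction in the $5$-element case is valid since $U_{2,4}$ is self-dual and connectedness is preserved under duality, and your enumeration of the connected rank-$2$ single-element extensions of $U_{2,4}$ (either $U_{2,5}$ or $U_{2,4}$ with a parallel element) is complete. This is essentially the standard elementary proof.
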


\begin{lem}\label{loop-loop}(\cite{Chen15}, Corollary 5.)
Let $G_1, G_2$ be graphs with $L(G_1)=L(G_2)$, and let $e$ be a loop of both $G_1$ and $G_2$. Then $G_1$ and $G_2$ are $2$-isomorphic. 
\end{lem}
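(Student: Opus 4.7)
The plan is to reduce the claim to Whitney's $2$-Isomorphism Theorem by contracting the distinguished loop $e$. The key identity to establish is $L(G)/e=M(G\setminus e)$ whenever $e$ is a loop of the graph $G$; once this is in hand, $L(G_1)=L(G_2)$ immediately gives $M(G_1\setminus e)=M(G_2\setminus e)$, and Whitney's theorem supplies a $2$-isomorphism between $G_1\setminus e$ and $G_2\setminus e$.

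To prove the identity $L(G)/e=M(G\setminus e)$, I would argue at the level of circuits. A circuit of $L(G)$ of the ``three internally disjoint paths between a pair of distinct vertices'' type cannot contain $e$, since a loop has only one endpoint. Hence every $L(G)$-circuit through $e$ is of the ``two edge-disjoint cycles with at most one common vertex'' type, and because the loop $e$ is itself a single-vertex cycle, any such circuit must have the form $\{e\}\cup C'$ where $C'$ is a cycle of $G\setminus e$. Contracting $e$ and taking minimal members, and noting that any $L(G)$-circuit avoiding $e$ already contains properly some cycle of $G\setminus e$, yields $\mathcal{C}(L(G)/e)=\mathcal{C}(M(G\setminus e))$.

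To lift a $2$-isomorphism from $G_i\setminus e$ to $G_i$, I would use the observation that the loop $e$ forms a block by itself, so each $G_i$ is $2$-isomorphic to the disjoint union of $G_i\setminus e$ with an isolated vertex carrying a loop labelled $e$; the detachment is an instance of the block-partition operation, and the reattachment is an instance of the vertex-identification operation. Consequently, the $2$-isomorphism between $G_1\setminus e$ and $G_2\setminus e$ extends immediately to a $2$-isomorphism between $G_1$ and $G_2$. I expect the only delicate point is the first step, where the circuit structure must be analyzed carefully (in particular ruling out the theta-type circuits through $e$ and verifying minimality after contraction); the loop-reattachment argument is then a straightforward consequence of the permitted $2$-isomorphism moves.
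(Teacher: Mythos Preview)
The paper does not supply its own proof of this lemma; it is quoted verbatim as Corollary~5 of \cite{Chen15}. Your argument is correct and self-contained, and it is the natural route: the identity $L(G)/e=M(G\setminus e)$ for a graph-loop $e$ follows exactly as you outline (every $L(G)$-circuit through $e$ has the form $\{e\}\cup C'$ with $C'$ a cycle of $G\setminus e$, and every $L(G)$-circuit avoiding $e$ properly contains such a $C'$), after which Whitney's theorem and the block-detachment/vertex-identification moves finish the job. There is nothing delicate left unaddressed; your sketch would serve as a complete proof once the minimality verification for $\mathcal C(L(G)/e)$ is written out.
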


\begin{thm}\label{2-con}
If $M$ is connected then $M$ is $3$-connected.
\end{thm}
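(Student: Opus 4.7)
The plan is to assume for contradiction that $M$ is connected but not $3$-connected, and from this derive that $M \in \overline{BL}$ (or, in one residual case, a smaller proper minor of $M$ outside $\overline{BL}$); either conclusion contradicts $M$ being an excluded minor. Since $M$ is connected, a $2$-separation of $M$ yields a $2$-sum decomposition $M = M_1 \oplus_2^e M_2$ with $M_1, M_2$ both connected and of size at least three. Each $M_i$ is isomorphic to a proper minor of $M$ (delete and contract the elements on the opposite side of the separation down to a single element and relabel that element as $e$), so $M_1, M_2 \in \overline{BL}$.

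If both $M_1$ and $M_2$ are graphic, then the $2$-sum of their graphic representations along the basepoint edge is a graphic representation of $M$, contradicting $M \notin \overline{BL}$. So without loss of generality $M_1$ is non-graphic, and hence $M_1$ is a non-graphic bicircular lift matroid containing a $U_{2,4}$-minor; by Lemma \ref{Bixby} we may take this minor to use the basepoint $e$.

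The crucial intermediate step is to upgrade a representation $L(G_1) = M_1$ to one in which $e$ is a loop of $G_1$. The motivating example is that $U_{2,4}$ itself admits such a representation, namely one loop $e$ together with three parallel links between two vertices. Uniqueness of loop-representations is controlled by Lemma \ref{loop-loop}, and the existence side is obtained by starting from the $U_{2,4}$-minor through $e$ and rebuilding $M_1$ element by element, using Lemma \ref{con-minor} to keep each intermediate minor connected and invoking Theorem \ref{non-eq-repre} together with the representation theory from \cite{Chen15} to propagate the loop-structure at $e$. When $M_2$ is also in $BL$, the same argument yields $G_2$ with $L(G_2) = M_2$ and $e$ a loop of $G_2$; then $G = G_1 \oplus_1^e G_2$ satisfies $L(G) = M_1 \oplus_2^e M_2 = M$ directly from the definitions of $\oplus_1^e$ and $\oplus_2^e$, placing $M$ in $BL \subseteq \overline{BL}$ and yielding the desired contradiction.

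The only remaining scenario is that $M_2$ is graphic but not bicircular lift. Lemma \ref{graph} then gives an $M(K_4)$-minor of $M_2$, and contracting or deleting the elements of $E(M_2) \setminus \{e\}$ down to this minor produces a minor of $M$ isomorphic to $M_1 \oplus_2^e M(K_4)$. This minor is non-graphic, since it contains $M_1$, and by the contrapositive of Lemma \ref{2-con-} combined with Lemma \ref{K_4} it is not in $BL$, so it lies outside $\overline{BL}$. Unless $M_2 \cong M(K_4)$, this is a proper minor of $M$, contradicting $M$ being an excluded minor. In the residual subcase $M_2 \cong M(K_4)$ one repeats the trick on the other side: replace $M_1$ by a strict minor $N$ still containing a $U_{2,4}$-minor through $e$ (Lemma \ref{Bixby}), obtaining a proper minor of $M$ of the form $N \oplus_2^e M(K_4)$, which for the same reasons lies outside $\overline{BL}$. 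The hard part will be the loop-representation claim for $M_1$ (and, in the BL case, $M_2$): pinning down existence requires marrying the $U_{2,4}$-through-$e$ information to the representation theory of \cite{Chen15}, and I expect this to be the technical heart of the section.
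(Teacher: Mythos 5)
Your overall frame (reduce to a $2$-sum $M=M_1\oplus_2^e M_2$, force $M_1$ non-graphic with a $U_{2,4}$-minor through $e$, and try to realize $e$ as a graph loop on each side so that $G_1\oplus_1^e G_2$ represents $M$) matches the paper's opening moves, but both of your terminal cases have genuine gaps. First, in the case ``$M_2\in BL$'' you assert that ``the same argument'' produces a representation of $M_2$ with $e$ a loop. The argument you sketch for $M_1$ is anchored on a $U_{2,4}$-minor of $M_1$ through $e$ and on the fact that the relevant proper minors of $M$ are non-graphic (hence forced to be bicircular lift, so that Lemma \ref{2-con-} applies and hands you the loop). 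If $M_2$ is graphic --- and a graphic matroid can perfectly well be bicircular lift, e.g.\ any $U_{1,n}$ or any circuit --- then $M_2$ has no $U_{2,4}$-minor at all, and removing elements of $M_1$ can leave a \emph{graphic} minor of $M$ (e.g.\ $U_{2,3}\oplus_2^e M_2$), to which none of this machinery applies. A graphic bicircular lift matroid need not admit a representation with a prescribed element $e$ as a loop, and this subcase is exactly where the paper does its hardest work: it first pins down $M_1=U_{2,4}$ (via Lemma \ref{loop-loop} and a $2$-isomorphism argument you do not carry out), then shows $\si(M_2)=U_{2,3}$ and analyzes the parallel class of $M_2$ containing $e$ to conclude $M$ is bicircular lift after all.

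Second, your remaining case rests on a false implication: Lemma \ref{graph} does \emph{not} give an $M(K_4)$-minor of a graphic non-bicircular-lift matroid. It only says the underlying $2$-edge-connected graph is not a subdivision of $K_2^n$, and there are $2$-connected series-parallel graphs with this property. For instance, take two triangles $abc$ and $cde$ together with the edge $ae$: this graph is $2$-connected, has three branch vertices (so is not a $K_2^n$-subdivision, whence its cycle matroid is not bicircular lift by Lemma \ref{graph}), yet it is series-parallel and therefore has no $K_4$-minor. So the reduction to $M_1\oplus_2^e M(K_4)$ is unavailable, and the whole ``residual case'' analysis built on it collapses. (Even where an $M(K_4)$-minor does exist, you would still need it to use $e$ non-trivially for the minor of $M$ to have the form $M_1\oplus_2^e M(K_4)$.) To repair the proof you essentially need the paper's route: obtain the loop representation of $M_1$ by applying Lemma \ref{2-con-} to $M/f$ or $M\backslash f$ for $f\in E(M_2)-e$ (these are non-graphic proper minors of $M$, hence bicircular lift), prove $M_1=U_{2,4}$, and then handle graphic $M_2$ by contracting a suitable $f\in E(M_2)-e$ and invoking Lemma \ref{graph} together with Corollary \ref{series-class} to force $M_2/f=U_{1,n}$ and $\si(M_2)=U_{2,3}$.
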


\begin{proof}
Assume to the contrary that $M=M_1\oplus_2^e M_2$ for some connected matroid $M_1,M_2$ with at least three elements.  Evidently, either $M_1$ or $M_2$ is non-graphic for otherwise $M$ is graphic. By symmetry we may assume that $M_1$ is non-graphic. Then Lemmas \ref{ex-mi-of-bi}-\ref{F_7^*} imply that $M_1$ has a $U_{2,4}$-minor.  We claim that there is a graph $G_1$ with $M_1=L(G_1)$ such that $e$ is a loop of $G_1$.  By Lemma \ref{con-minor} for each $f\in E(M_2)-e$ either $M/f$ or $M\del f$ is connected having $U_{2,4}$ as a minor. Without loss of generality assume that the former case happens. Since $M/f=M_1\oplus_2^e (M_2/ f)$ and $M/f$ is bicircular lift, the claim follows from Lemma \ref{2-con-}.

We claim that $M_1=U_{2,4}$. Assume otherwise. Then by Lemmas \ref{con-minor} and \ref{Bixby} for some $f\in E(M_1)-e$ either $M\del f$ or $M/f$ is connected having $U_{2,4}$ as a minor. Without loss of generality assume that the latter case happens. Since $M/f$ is bicircular lift, by Lemma \ref{2-con-} there are graphs $G'_1,G_2$ with $M/f=L(G'_1\oplus_1^e G_2)$, $M_1/f=L(G'_1)$ and with $M_2=L(G_2)$. Since there is a graph $G_1$ with $M_1=L(G_1)$ such that $e$ is a loop of $G_1$, by Lemma \ref{loop-loop} we have that $G_1/f$ and $G'_1$ are 2-isomorphic, that is, a set is a cycle of $G_1/f$ if and only if it is a cycle of $G'_1$. Moreover, since $M/f=L(G'_1\oplus_1^e G_2)$ and $M_1=L(G_1)$, it is easy to verify that $M=L(G_1\oplus_1^e G_2)$, a contradiction. So $M_1=U_{2,4}$.

If $M_2$ is non-graphic, then by symmetry $M_2=U_{2,4}$, which is not possible as $M=U_{2,4}\oplus_2 U_{2,4}$ is  bicircular lift. So assume that $M_2$ is graphic. Since $U_{2,4}\oplus_2 U_{1,k}$ is bicircular lift for each integer $k\geq3$, we may further assume that $r(M_2)\geq2$. Hence, by the definition of 2-sum there is an element $f\in E(M_2)-e$ such that $M/f$ has a $U_{2,4}$-minor. Since $M/f$ is bicircular lift and has no coloops, $M/f$ is connected; so $M_2/f$ is connected. Moreover, by Lemma \ref{2-con-} the matroid $M_2/f$ is bicircular lift. Since $M_2$ is graphic, by Lemma \ref{graph}  there is a $K_2^n$-subdivision  $H$ for some integer $n\geq2$ with $M_2/f=M(H)$. Since $M$ has no series pairs by Corollary \ref{series-class},  each series pair of $M_2$ must contain $e$, so the graph $H$ has at most one 2-edge path and the path must contain $e$ when it exists. (Note that in the paper we do not see cycles as paths.) On the other hand, since $M/f=M_1\oplus_2^e M_2/f$ is bicircular lift, Lemma \ref{2-con-} implies that $e$ is a loop of some graphic bicircular lift representation of $M_2/f$. So $H=K_2^n$, that is, $M_2/f=U_{1,n}$. Hence, $\si(M_2)=U_{2,3}$ as $M_2$ is a connected graphic matroid. Since $U_{2,4}\oplus_2 U_{2,3}$ is bicircular lift, $M_2$ has at least one non-trivial parallel class. We claim that $M_2$ has a unique non-trivial parallel class and the non-trivial parallel class contains $e$. Assume otherwise. Then there is an element $e'\in E(M_2)-e$ such that $M/e'$ has a loop and $U_{2,4}$-minors, so $M/e'$ is not in $\overline{BL}$, which is not possible.  So the claim holds, implying that $M$ is bicircular lift, a contradiction.
\end{proof}

By Theorems \ref{1-con} and \ref{2-con}, in the rest of the paper we may assume that $M$ is 3-connected.

\section{The $3$-connected Case}
In this section, we prove that when an excluded minor $M$ is 3-connected and has a $U_{2,4}$-minor, we have $r(M)\leq 11$ and $|E(M)|\leq 224$. To prove this, we need introduce the following well-known result in matroid theory, which was proved by Seymour \cite{Seymour}.

\begin{thm}\label{splitter-th}(Splitter Theorem.) (\cite{Oxley}, Corollary 12.2.1.)
Let $N$ and $N'$ be $3$-connected matroids such that $N'$ is a minor of $N$ with at least four elements and if $N'$ is a wheel then $N$ has no larger wheel as a minor, while if $N'$ is a whirl then $N$ has no larger whirl as a minor. Then there is a sequence $N_0,N_1,\ldots, N_n$ of $3$-connected matroids such that $N_0\cong N$, $N_n\cong N'$, and for all integers $1\leq i\leq n$, $N_i$ is a single-element deletion or a single-element contraction of $N_{i-1}$.
\end{thm}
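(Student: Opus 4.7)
The plan is to induct on $|E(N)|-|E(N')|$. The base case $N\cong N'$ gives the trivial sequence of length zero. For the inductive step it suffices to exhibit an element $e\in E(N)\setminus E(N')$ for which either $N\del e$ or $N/e$ is $3$-connected and still contains $N'$ as a minor; applying the induction hypothesis to that smaller matroid and prepending $N$ yields the desired sequence. The two engines of the argument are Tutte's Wheels-and-Whirls Theorem (a $3$-connected matroid with at least four elements that is neither a wheel nor a whirl admits some $3$-connected single-element deletion or contraction) and Bixby's Lemma (for every element $e$ of a $3$-connected matroid, either $\si(N/e)$ or $\co(N\del e)$ is $3$-connected).

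Assume the conclusion fails on a minimum counterexample $(N,N')$. If some $f\in E(N)\setminus E(N')$ satisfies $N\del f$ or $N/f$ being $3$-connected, the resulting minor trivially retains $N'$ and induction applies, contradicting minimality. Hence every such $f$ is a \emph{crux}: its removal (in either sense) destroys $3$-connectedness. Next I apply Tutte's theorem to $N$ itself. If $N$ is not a wheel or whirl, some element $e_0\in E(N)$ admits $N\del e_0$ or $N/e_0$ being $3$-connected; by the crux condition necessarily $e_0\in E(N')$, so that reduction loses $N'$. Combining this with Bixby's Lemma applied to a crux element $f\in E(N)\setminus E(N')$, I can extract a triangle or triad through $f$ meeting $E(N')$, thereby pairing each crux element with a nearby element of $E(N')$.

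The core step is then to show that the family of triangles and triads produced this way glues globally into a rim-spoke structure, forcing $N$ itself to be a wheel or whirl. The argument proceeds by orthogonality between circuits and cocircuits: a triangle $T$ through $f$ and a triad $T^*$ through $f$ intersect in at least two elements, and iterating this along $E(N)\setminus E(N')$ propagates a consistent cyclic pattern. Crucially, the $3$-connectedness of $N'$ together with $|E(N')|\ge 4$ prevents degenerate overlaps that would otherwise break the alternation. Once $N$ is shown to be a wheel or whirl, $N'$ must embed as a minor of a strictly smaller wheel or whirl of the same family, contradicting precisely the Splitter hypothesis that $N$ contains no larger wheel/whirl than $N'$.

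The main obstacle is this last structural extraction: the local pairing of crux elements with nearby triangles or triads must be shown to assemble into a single global cycle of rims and spokes rather than into several disjoint fragments. This is where one needs the most delicate bookkeeping, typically by choosing $f$ so that $\co(N\del f)$ or $\si(N/f)$ is $3$-connected with $N'$ as a minor (supplied by Bixby) and then analyzing the unique $2$-separation of $N\del f$ or $N/f$ that forces the placement of $f$ inside a triad or triangle meeting $E(N')$. All remaining casework — wheels versus whirls, even versus odd rank, and handling when $N'$ itself is a wheel or whirl — follows from repeated invocation of Tutte's theorem on the minor produced by removing one rim of the candidate wheel/whirl in $N$.
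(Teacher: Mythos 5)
This statement is Seymour's Splitter Theorem; the paper does not prove it but quotes it verbatim from Oxley's book (Corollary 12.2.1), so there is no in-paper argument to compare against. Judged on its own terms, your proposal is not a proof but an annotated table of contents for one. The induction frame, the reduction to a minimum counterexample in which every element of $E(N)\setminus E(N')$ is a ``crux,'' and the appeal to Tutte's Wheels-and-Whirls Theorem and Bixby's Lemma are all correct ingredients of the standard argument. But the entire mathematical content of the Splitter Theorem is the step you defer: showing that when no element outside $E(N')$ can be deleted or contracted while preserving $3$-connectedness and the $N'$-minor, the local triangles and triads through crux elements assemble into a global fan/wheel structure, forcing $N$ to be a wheel or whirl. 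You describe this as ``delicate bookkeeping'' that ``propagates a consistent cyclic pattern'' by orthogonality, but you never carry it out; in Seymour's and Oxley's treatments this occupies several pages of careful case analysis (maximal fans, the interaction of a triangle and a triad meeting in two elements, and the endgame identifying the rim and spokes). Asserting that it ``glues globally'' is precisely assuming the conclusion.

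There are also two concrete technical gaps in the steps you do sketch. First, Bixby's Lemma only guarantees that $\co(N\del e)$ or $\si(N/e)$ is $3$-connected, whereas the theorem requires each $N_i$ in the chain to be $3$-connected after a \emph{single} element removal with no subsequent simplification or cosimplification; bridging this (by showing the offending series or parallel pairs can be avoided by a different choice of element, or absorbed into the fan analysis) is part of the work you have skipped. Second, your phrase ``$e_0\in E(N')$'' presupposes a fixed realization $N'=N/C\del D$; the crux condition must be stated relative to such a realization, and one must check that the wheel/whirl maximality hypothesis is inherited by the smaller matroid in the inductive step (otherwise the induction hypothesis does not apply). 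Since the paper's intent is to cite this as a known black box, the honest courses are either to cite it as the paper does or to reproduce the full Seymour--Oxley argument; the present sketch does neither.
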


\begin{lem}\label{ele-del}
Let $N$ be a $3$-connected matroid with $U_{2,4}$-minors and $r^*(N)\geq3$. Then there is $e\in E(N)$ such that $\co(N\del e)$ is $3$-connected with $U_{2,4}$-minors.
\end{lem}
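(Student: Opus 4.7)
The plan is to apply the Splitter Theorem (Theorem \ref{splitter-th}) to $N$ with a carefully chosen $3$-connected target minor $N'$ that contains a $U_{2,4}$-minor. To meet the Splitter Theorem's whirl hypothesis, I would distinguish two cases: if $N$ has no whirl minor of rank larger than two, take $N' = U_{2,4}$; otherwise take $N'$ to be a whirl minor of $N$ of largest rank. In either case $N$ has no whirl minor strictly larger than $N'$, and $N'$ is a $3$-connected matroid with at least four elements having $U_{2,4}$ as a minor.

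The Splitter Theorem then produces a sequence $N = N_0, N_1, \ldots, N_k = N'$ of $3$-connected matroids, each obtained from the previous by a single-element deletion or contraction. When $N' = U_{2,4}$, the hypothesis $r^*(N) \geq 3 > 2 = r^*(U_{2,4})$ forces at least one deletion step in the sequence. Let $e \in E(N)$ be the element deleted in the first such step $N_i = N_{i-1} \setminus e$. Since deletion commutes with contraction, $N_i = N \setminus e / F$ for some $F \subseteq E(N) \setminus \{e\}$ consisting of the elements contracted in the earlier steps. Therefore $N\setminus e$ has $N_i$, a $3$-connected matroid with a $U_{2,4}$-minor, as a minor. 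Consequently $N \setminus e$ has a $U_{2,4}$-minor, and since $U_{2,4}$ is $3$-connected it persists under cosimplification, so $\co(N \setminus e)$ also has a $U_{2,4}$-minor.

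The main obstacle will be to show that $\co(N \setminus e)$ is itself $3$-connected, not merely that it contains a $3$-connected minor with a $U_{2,4}$-minor. My plan here is to combine Bixby's Lemma (for any element $f$ of a $3$-connected matroid with at least four elements, one of $\co(N\setminus f)$ or $\si(N/f)$ is $3$-connected) with a careful analysis of how $2$-separations of $N\setminus e$ can arise: any such $2$-separation must either originate from a series class of $N\setminus e$ (corresponding to a triad of $N$ containing $e$), which is dissolved by the cosimplification operation, or it would induce a forbidden $2$-separation of $N$ itself through $e$, contradicting the $3$-connectedness of $N$. The residual case where $N$ itself equals a whirl $\mathcal{W}^r$ of rank $r \geq 3$ (so the Splitter reduction is trivial) is handled by direct computation: deleting a spoke element and cosimplifying yields the uniform matroid $U_{2, r+1}$, which is $3$-connected and contains $U_{2,4}$ as a minor for $r \geq 3$.
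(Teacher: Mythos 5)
Your overall strategy---apply the Splitter Theorem with a largest whirl minor as the target and take the element removed at the first deletion step of the chain---is the same as the paper's, but the step you yourself identify as the main obstacle is where the proposal breaks down. The dichotomy you propose, that every $2$-separation of $N\del e$ either arises from a series class of $N\del e$ or lifts to a $2$-separation of $N$, is false for a general element of a $3$-connected matroid: $e$ may be exactly the element that blocks a $2$-separation $(A,B)$ of $N\del e$ in which neither side is a union of series classes (for instance, delete from a suitable $3$-connected graphic matroid the unique edge joining the two sides of a $2$-vertex cut of the remaining graph, where that remaining graph has minimum degree $3$). Since your argument for $3$-connectivity nowhere uses the defining property of $e$---that all elements removed before it in the chain are contracted and every $N_i$ is $3$-connected---it would, if valid, show that $\co(N\del e)$ is $3$-connected for \emph{every} $e$, which is false. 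Bixby's Lemma does not repair this: it guarantees that one of $\co(N\del f)$ and $\si(N/f)$ is $3$-connected but gives no control over which one, nor over the survival of a $U_{2,4}$-minor on the deletion side. The paper instead reads the $3$-connectivity of $\co(N\del e_k)$ directly off the chain structure, using that $N_k=(N\del e_k)/\{f_1,\dots,f_{k-1}\}$ and all intermediate matroids are $3$-connected.

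There are two further problems in the all-contraction case. You prove that a deletion step exists only when $N'=U_{2,4}$; when $N'$ is a whirl of rank at least $3$, the chain can consist entirely of contractions even though $N$ is not itself a whirl (then $r^*(N')=r^*(N)\geq3$, which is perfectly consistent), and your ``residual case where $N$ itself equals a whirl'' does not cover this. The paper handles it by choosing $e$ inside the whirl $N'$ so that $\co(N'\del e)$ is $3$-connected with a $U_{2,4}$-minor and then lifting that choice to $N$. Finally, your direct computation is wrong: $\co(\mathcal{W}^r\del e)$ is not $U_{2,r+1}$ for $r\geq4$ (the ranks already disagree, $r-1$ versus $2$); the correct statement is $\co(\mathcal{W}^r\del e)\cong\mathcal{W}^{r-1}$, which is still $3$-connected with a $U_{2,4}$-minor, so this particular conclusion survives once the computation is fixed.
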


\begin{proof}
Evidently, $N$ is not a wheel. When $N$ is a whirl, the result is obviously true. So we may assume that $N$ is not a whirl. Let $N'$ be the largest whirl over all minors of $N$. By the Splitter Theorem there is a sequence $N_0,N_1,\ldots, N_n$ of $3$-connected matroids such that $N_0\cong N$, $N_n\cong N'$, and for all integers $1\leq i\leq n$, $N_i$ is a single-element deletion or a single-element contraction of $N_{i-1}$. Assume that some $N_i$ is a single-element deletion of $N_{i-1}$. Let $k$ be the minimum integer in $\{1,\ldots,n\}$ satisfying $N_k=N_{k-1}\del e_k$. Then $\co(N\del e_k)$ is 3-connected with $U_{2,4}$-minors. So we may assume that all $N_i$ are single-element contractions of $N_{i-1}$. Then $r^*(N')=r^*(N)\geq3$. Moreover,  since $r^*(U_{2,4})=2$, the matroid $N'$ is a whirl but $N'\ncong U_{2,4}$. Then there is an element $e$ in $N'$ such that $\co(N'\del e)$ is 3-connected with $U_{2,4}$-minors. Hence, $\co(N\del e)$ is 3-connected with $U_{2,4}$-minors.
\end{proof}

\begin{lem}\label{del-rank}
Let $N$ be a connected bicircular lift  matroid  with $r^*(N)\geq3$. Then  for each $e\in E(N)$ we have $r(\co(N\del e))\geq r(\co(N))-2$.
\end{lem}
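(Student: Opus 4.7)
The plan is to pass to a bicircular lift graphic representation of $N$ and reduce the statement to a vertex-count inequality for cosimplifications of graphs.

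First I would observe that since $N$ is connected with $r^*(N)\geq 3$, we have $|E(N)|\geq 4$, so $N$ has no coloops; hence $r(N\del e)=r(N)$ and $r^*(N\del e)=r^*(N)-1\geq 2$. Fix a $2$-edge-connected graph $G$ with $N=L(G)$. The identity $r(L(H))=|V(H)|$ for connected $H$ with cycles forces $G\del e$ to remain connected and to have cycles (otherwise $r(L(G\del e))<|V(G)|=r(N\del e)$). The condition $r^*(L(G\del e))\geq 2$ then ensures that the identity $r(\co(L(H)))=|V(\co(H))|$, stated in the remark preceding Lemma~\ref{series-pair}, applies. Applying it to both $G$ and $G\del e$ reduces the statement to the graph-theoretic inequality
\[
|V(\co(G\del e))|\geq |V(\co(G))|-2.
\]

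Next I would set $H:=\co(G)$, a cosimple $2$-edge-connected graph, and let $e_S\in E(H)$ be the edge of $H$ representing $e$'s series class in $G$. Since deletion commutes with the contractions defining $\co$ and $\co$ is idempotent, $\co(G\del e)\cong\co(H\del e_S)$, reducing the problem further to showing that for every cosimple $2$-edge-connected graph $H$ and every edge $e_S\in E(H)$,
\[
|V(\co(H\del e_S))|\geq |V(H)|-2.
\]
The proof would be a short case analysis. Cosimpleness of $H$ gives that every vertex has degree at least $3$ and that no two edges form a $2$-edge-cut of $H$; consequently $H\del e_S$ has no new cut-edges, and only the endpoints of $e_S$ (or the vertex carrying $e_S$, if it is a loop) can drop to degree $2$. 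Each such vertex produces at most one new series class, each absorbing at most one additional vertex during cosimplification, so the total vertex drop is at most $2$.

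The main obstacle is the subcase in which $e_S=xy$ is a link with $d_H(x)=d_H(y)=3$: both $x$ and $y$ drop to degree $2$, spawning series classes $\{a_1,a_2\}$ at $x$ and $\{b_1,b_2\}$ at $y$, and a priori these might fuse into a single series class of size $4$, collapsing three vertices. I would rule this out using cosimpleness of $H$: if the two classes shared a common neighbor $p$ via $a_1=xp$, $b_1=yp$, and $\{a_1,b_1\}$ were a series pair in $H\del e_S$, then every cycle of $H\del e_S$ through $a_1$ would pass through $b_1$; but cosimpleness of $H$ implies $\{b_1,e_S\}$ is not a $2$-edge-cut of $H$, so $H-\{b_1,e_S\}$ is connected and contains a path from $x$ to $p$, which together with $a_1$ yields a cycle in $H\del e_S$ through $a_1$ avoiding $b_1$---a contradiction. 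The two classes therefore remain distinct, each absorbing exactly one vertex, for a total drop of $2$. The remaining subcases (shared edge making them merge into a single class of size $3$, $e_S$ a loop, etc.) are handled by analogous direct degree counts.
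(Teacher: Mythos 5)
Your overall strategy---pass to a bicircular lift representation, invoke $r(\co(L(H)))=|V(\co(H))|$, and bound the number of vertices lost when a single edge is deleted from a cosimple $2$-edge-connected graph---is essentially the same as the paper's. But the counting step at the heart of your argument is wrong. You assume that the only non-trivial series classes of $H\del e_S$ are those ``spawned'' at vertices whose degree drops to $2$, i.e.\ pairs of links meeting at a new degree-$2$ vertex. In a $2$-edge-connected graph, however, $\{a,b\}$ is a series pair exactly when $\{a,b\}$ is a $2$-edge-cut, and $H\del e_S$ acquires such a pair precisely when $\{a,b,e_S\}$ is a $3$-edge-cut of the cosimple graph $H$. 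The edges $a,b$ need not touch the endpoints of $e_S$, and no vertex need fall to degree $2$ at all. So your case analysis (including the careful ``two classes might fuse'' subcase) controls only the local series classes, and the claimed bound of $2$ on the vertex drop is not established.

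This is not a presentational gap that a finer local analysis would close. Let $H$ consist of $k+1$ disjoint copies of $K_4$ arranged in a path, consecutive copies joined by two independent edges $a_i,b_i$ ($1\le i\le k$), together with one extra edge $e$ joining the first copy to the last. Then $H$ is $2$-edge-connected and cosimple (the cycle through $a_1,\ldots,a_k,e$ contains each $a_i$ but no $b_i$), $L(H)$ is connected with $r^*(L(H))=4k+3$, and $H\del e$ has minimum degree $3$ yet carries the $k$ disjoint non-trivial series classes $\{a_i,b_i\}$: these are $k$ nested $2$-edge-cuts, all of which the single edge $e$ crosses. Hence $r(\co(L(H)\del e))=r(\co(L(H)))-k$, so for $k\ge 3$ the inequality fails. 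Note that the paper's own proof founders on exactly the same point: it correctly shows each series class of $N\del e$ has at most two elements, but its assertion that $N\del e$ has at most two such classes because ``there is no way to put $e$ back'' is refuted by the nested-cut configuration above. Any repair must control the number of $3$-edge-cuts of $G$ through $e$ rather than the degrees at the ends of $e$, and since that number is unbounded, the statement of Lemma~\ref{del-rank} itself (and its uses in Lemmas~\ref{cor:del-rank}, \ref{contra-non-graphic}, \ref{3-graphs} and \ref{lift-bicircular}) needs to be revisited rather than merely re-proved.
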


\begin{proof}
Without loss of generality we may assume that $N$ has no series pairs. Let $G$ be a graph with $N=L(G)$. Then $N\del e=L(G\del e)$. Since $N$ has no series pairs, $N\del e$ has no coloops and by Remark \ref{remark:series-pair} the graph $G$ has no series pairs. Let $S$ be a series class in $N\del e$. Since $r^*(N\del e)\geq2$,  by Remark \ref{remark:series-pair} again $S$ is also a series class of $G\del e$. Hence, when $|S|\geq 3$, there is no way to put $e$ back in $G\del e$ such that $G$ has  no series pairs. So $|S|=2$. Moreover, when $N\del e$ has at least three series classes, it is also no way to put $e$ back to $G\del e$ such that $G$ has no series pairs. Hence, $N\del e$ has at most two series class and each series class has at most two elements. Hence, $r(\co(N\del e))\geq r(N)-2$ as $N\del e$ has no coloops.
\end{proof}

%Note that when $r^*(N)=2$, Lemma \ref{del-rank} does not hold, say $U_{r,r+2}$.

\begin{lem}\label{cor:del-rank}
When $r^*(M)\geq3$, there is an element $e\in E(M)$ with $r(\co(M\del e))\geq r(M)-3$ and such that $\co(M\del e)$ is $3$-connected with $U_{2,4}$-minors.
\end{lem}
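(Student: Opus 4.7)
The proof has two phases. First, since $M$ is $3$-connected with a $U_{2,4}$-minor and $r^*(M) \geq 3$, Lemma~\ref{ele-del} supplies an element $e \in E(M)$ such that $\co(M\del e)$ is $3$-connected and contains a $U_{2,4}$-minor; this immediately secures the first clause of the corollary, leaving the rank inequality $r(\co(M\del e)) \geq r(M) - 3$ as the remaining task.

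Since $M$ is $3$-connected with $|E(M)|\geq 4$ (using the $U_{2,4}$-minor), $M$ has no $2$-element cocircuits; hence $M\del e$ has no coloops and $r(M\del e) = r(M)$. Writing $S_1,\ldots, S_k$ for the series classes of $M\del e$, the drop $r(M\del e) - r(\co(M\del e))$ equals $\sum_i(|S_i|-1)$, so the goal reduces to showing $\sum_i(|S_i|-1) \leq 3$. Since $M$ is an excluded minor, $M\del e \in \overline{BL}$, so $M\del e$ is bicircular lift or graphic; and by Corollary~\ref{series-class}, $M$ has no series pairs, so each $2$-element cocircuit $\{f,g\}$ of $M\del e$ lifts to a triad $\{e,f,g\}$ of $M$ (a $2$-cocircuit of $M\del e$ cannot itself be a cocircuit of $M$ by $3$-connectivity of $M$).

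In the bicircular lift case, let $G$ be a bicircular lift graph for $M\del e$. By Remark~\ref{remark:series-pair}, which applies since $r^*(M\del e) = r^*(M) - 1 \geq 2$, the series classes of $M\del e$ correspond bijectively to the maximal paths of $G$ whose internal vertices all have degree $2$. The combinatorial argument inside the proof of Lemma~\ref{del-rank} shows that, \emph{under the stronger hypothesis that $M$ itself is bicircular lift}, the total count of such internal vertices is at most $2$, since the single edge $e$ added back to $G$ can raise the degree of at most two vertices. In our present setting---where $M$ is only required to be free of series pairs, not to admit a bicircular lift representation---a parallel but slightly looser combinatorial analysis yields the weaker bound of $3$. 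The graphic case, in which $M\del e = M(H)$ for a graph $H$, is handled analogously with $H$ in place of $G$.

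The chief difficulty is making the combinatorial analysis precise with the bound being exactly $3$, in particular ruling out configurations such as two disjoint series classes of size $3$ each (which would produce rank loss $4$); this requires combining the $3$-connectivity of $\co(M\del e)$ already secured in the first step, the triad structure forced in $M$ (each series pair of $M\del e$ giving a triad through $e$), and the $3$-connectivity of $M$ itself.
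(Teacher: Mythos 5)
Your first phase is fine: Lemma~\ref{ele-del} does apply directly to $M$ and yields an $e$ with $\co(M\del e)$ $3$-connected with a $U_{2,4}$-minor. The gap is in the second phase, and you essentially concede it yourself in your closing paragraph: the bound $\sum_i(|S_i|-1)\leq 3$ is never actually proved. The mechanism of Lemma~\ref{del-rank} is to take a single graph $G$ with $N=L(G)$, observe that $N\del e=L(G\del e)$, and argue that the one edge $e$ must be re-insertable into $G\del e$ so as to destroy all degree-$2$ vertices; this forces at most two series classes of size at most two. That argument is unavailable here precisely because $M$ is an excluded minor and hence has no bicircular lift representation at all: you have a graph $G$ with $M\del e=L(G)$, but there is no ``putting $e$ back.'' The substitute facts you list --- $M$ is $3$-connected, so each series pair of $M\del e$ extends to a triad of $M$ through $e$ --- do not by themselves bound the cosimplification rank drop by $3$: nothing you state excludes, say, a series class of size $5$ in $M\del e$ (equivalently, a long line of $M^*$ through $e$) or four pairwise disjoint series pairs, either of which gives a drop of at least $4$. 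Asserting that ``a parallel but slightly looser combinatorial analysis yields the weaker bound of $3$'' is exactly the missing content, not a proof of it.

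The paper sidesteps this difficulty by a different route that you should note. It first applies the Splitter Theorem to $M$ to obtain $f$ with $M\del f$ or $M/f$ $3$-connected with a $U_{2,4}$-minor. In the deletion case one is done at once (no cosimplification is needed, so the rank drop is $0$). In the contraction case the crucial point is that $M/f$ is a \emph{proper} minor of the excluded minor $M$, hence lies in $\overline{BL}$, and being $3$-connected and non-graphic it \emph{is} bicircular lift; so Lemma~\ref{del-rank} applies legitimately to $M/f$ and gives $r(\co(M/f\del e))\geq r(M/f)-2$ for the $e$ produced by Lemma~\ref{ele-del} applied to $M/f$, whence the total loss relative to $r(M)$ is at most $3$ (the extra $1$ coming from the contraction of $f$). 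In short, the factor $3=1+2$ in the statement is an artifact of first contracting one element to land inside $\overline{BL}$ and only then invoking the ``drop at most $2$'' lemma; your plan of deleting one element of $M$ directly and bounding the drop by $3$ would require a genuinely new argument that neither you nor the paper supplies.
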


\begin{proof}
Evidently, $M\neq U_{2,4}$ and when $M$ is a whirl the result holds. So we may assume that $M$ is not a whirl. By the Splitter Theorem there is an element $f\in E(M)$ such that $M/f$ or $M\del f$ is 3-connected with $U_{2,4}$-minors. If $M\del f$ is 3-connected with $U_{2,4}$-minors then the corollary holds. So we may assume that $M/f$ is 3-connected with $U_{2,4}$-minors. By Lemma \ref{ele-del} there is an element $e\in E(M/f)$ such that $\co(M/f\del e)$ is 3-connected with $U_{2,4}$-minors. Moreover, since $M/f$ is bicircular lift with $r^*(M/f)=r^*(M)\geq3$, by Lemma \ref{del-rank}  we have $r(\co(M/f\del e))\geq r(M/f)-2$. So $r(\co(M\del e))\geq r(M)-3$.
\end{proof}

\begin{lem}\label{contra-non-graphic}
Let $N$ be a matroid and $G$ a graph satisfying $N=L(G)$. Assume that $N$ is non-graphic with $r(\co(N)), r^*(N)\geq4$. Then for each element $e\in E(G)$, if $e$ is a link of $G$, then $N/e$ is non-graphic; and if $e$ is a loop or contained in some non-trivial parallel pair of $G$, then $N\del e$ is non-graphic.
\end{lem}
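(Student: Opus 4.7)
The plan is to argue by contradiction. Suppose the targeted minor of $N$---either $N/e$ (when $e$ is a link) or $N\del e$ (when $e$ is a loop or lies in a non-trivial parallel pair)---is graphic. Since $L(G)/e=L(G/e)$ for any link and $L(G)\del e=L(G\del e)$ in general, the target minor equals $L(G')$ for an auxiliary graph $G'$ (either $G/e$ or $G\del e$), so $L(G')$ is both graphic and bicircular lift. By Lemma~\ref{graph} the $2$-edge-connected component of $G'$ containing its cycles is a subdivision of $K_2^m$ for some $m\geq 2$.

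I would next pin down $|\co(G')|$. The cycle matroid of a subdivision of $K_2^m$ cosimplifies to $U_{1,m}$, which has rank $1$. The hypothesis $r^*(N)\geq 4$ implies $r^*(L(G'))\geq 2$, so Remark~\ref{remark:series-pair} gives $r(\co(L(G')))=|\co(G')|$. Together these force $|\co(G')|=1$, and since $L(G')$ has no coloops, $G'$ must itself be a bouquet of cycles at a single common vertex $v^\ast$.

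The heart of the argument is a case analysis that reconstructs $G$ from the bouquet $G'$ and reads off $|\co(G)|$. If $e$ is a link, then $G$ is obtained by splitting some vertex $u$ of $G'$ into two vertices $u_1,u_2$ joined by $e$. When $u=v^\ast$, distributing each cycle's two half-edges across $u_1$ and $u_2$ yields $|\co(G)|\leq 2$; the degenerate choices either make $e$ a bridge (so $|\co(G)|=1$) or force $G$ to be a subdivision of $K_2^{m+1}$, contradicting that $N$ is non-graphic. When $u$ is internal to a cycle of $G'$, the split either turns $e$ into a bridge or merely lengthens that cycle, and again $G$ is either a bouquet (so $N$ is graphic) or $|\co(G)|=1$. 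If $e$ is a loop of $G$, the loop lies in no series pair and is not a cut-edge, so $|\co(G)|=|\co(G')|=1$. If $e$ lies in a non-trivial parallel pair $\{e,f\}$, the location of $f$ inside the bouquet $G'$ determines $|\co(G)|$: when $f$ is incident to $v^\ast$, only $v^\ast$ and the other endpoint of $f$ survive cosimplification, giving $|\co(G)|=2$; when $f$ is strictly internal to a cycle of $G'$, both endpoints of $f$ together with $v^\ast$ survive, and $|\co(G)|=3$.

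In every case $|\co(G)|\leq 3$, and since this equals $r(\co(N))$ we contradict the hypothesis $r(\co(N))\geq 4$. The main obstacle is the case analysis in the link and parallel-pair situations: one must carefully examine how the splitting or insertion interacts with subdivision paths and which incident vertices actually survive cosimplification. Everything works out because bouquets of cycles have cosimplification of rank $1$, which tightly constrains all the possibilities that arise.
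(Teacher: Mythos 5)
Your overall strategy is sound and close in spirit to the paper's: both arguments use Lemma \ref{graph} to force $r(\co(N_e))\leq 1$ for the allegedly graphic minor $N_e$, and then contradict $r(\co(N))\geq 4$. Where the paper invokes Lemma \ref{del-rank} (for deletion) and the inequality $r(\co(N/e))\geq r(\co(N))-1$ (for contraction) to get $r(\co(N_e))\geq 2$ directly, you instead reconstruct $G$ from the bouquet $G'$ and bound $|\co(G)|\leq 3$ by hand; that is a legitimate alternative, and your link, loop, and generic parallel-pair cases check out. Two small slips that do not affect the logic: Lemma \ref{graph} describes the \emph{graphic} representation $H$ of $L(G')$, not $G'$ itself, so $G'$ is not a $K_2^m$-subdivision --- indeed you correctly conclude a paragraph later that it is a bouquet; and the bicircular lift matroid of a subdivision of $K_2^{m+1}$ is a coextension of $U_{2,m+1}$, which is non-graphic for $m\geq 3$, so that sub-case of your link analysis is ruled out by $|\co(G)|=2$, not by graphicity.

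The genuine gap is the sentence ``since $L(G')$ has no coloops, $G'$ must itself be a bouquet.'' That claim is unproved and is false in exactly one configuration: when $e$ lies in a two-element parallel class $\{e,f\}$ that is also a bond of $G$. Then $f$ is a cut-edge of $G'=G\del e$, hence a coloop of $N\del e$, and $G'$ is not a bouquet but two bouquets joined by the bridge $f$; your subsequent dichotomy ``either $f$ is incident to $v^*$ or $f$ is internal to a cycle of $G'$'' does not cover $f$ being a bridge. The case is not vacuous and must be argued: one checks that in this dumbbell configuration $\{e,f\}$ is a series class of $G$, every petal is a series class, and hence $|\co(G)|=1$, again contradicting $r(\co(N))\geq 4$. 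This is precisely the situation the paper isolates as its second case (``$G_e$ is not $2$-edge-connected''), where it reduces to $\co(N\del e,f)\cong\co(\co(N)\del e)$ and applies Lemma \ref{del-rank}. With that case added, your proof is complete.
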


\begin{proof}
Evidently, we can assume that $N$ is connected. So $G$ is 2-edge-connected. When $e$ is a link of $G$ that is not contained in any non-trivial parallel pair, set $G_e:=G/e, N_e:=N/e$; when $e$ is a loop of $G$, set $G_e:=G\del e, N_e:=N\del e$; and when $e$ is contained in some non-trivial parallel pair, $G_e$ and $N_e$ can be defined as the first case or the second case. Evidently, $N_e=L(G_e)$. Next we prove that $N_e$ is non-graphic. Assume $N_e$ is graphic.

First consider the case that $G_e$ is 2-edge-connected. Then $N_e$ is a connected graphic bicircular lift  matroid . By Lemma \ref{graph} we have $r(\co(N_e))\in\{0,1\}$. On the other hand, since $r(\co(N)), r^*(N)\geq4$, by Lemma \ref{del-rank} we have $r(\co(N\del e))\geq 2$. Moreover, since $r(\co(N/e))\geq r(\co(N))-1\geq3$, we have $r(\co(N_e))\geq 2$, a contradiction.

Secondly, consider the case that $G_e$ is not 2-edge-connected. Since $G$ is 2-edge-connected, $G_e=G-e$ and the non-trivial parallel class $P$ containing $e$ has exactly two elements, say $e,f$, and $P$ is also a series class of $G$. So $G\del\{e,f\}$ is 2-edge-connected and
\[\begin{aligned}
\co(N\del e,f)=\co(N\del e/f)=\co(\co(N/f)\del e)\cong\co(\co(N)\del e).
\end{aligned}\]
Therefore, since $r^*(\co(N))=r^*(N)\geq4$ and $\co(N)$ is connected, Lemma \ref{del-rank} implies $r(\co(N\del e,f))\geq r(\co(N))-2\geq 2$. On the other hand, since $G\del\{e,f\}$ is 2-edge-connected and $N_e=N\del e$ is graphic, by Lemma \ref{graph} we have $r(\co(N\del e,f))\in\{0,1\}$, a contradiction.
\end{proof}

Let $n$ be a positive integer. Let $C$ be a cycle of a graph $G$ and $x_1,x_2,\ldots,x_n\in V(C)$. Assume that $(x_1,x_2,\ldots,x_n)$ occurs in this order circularly on $C$. For any two distinct $x_i$ and $x_j$, the cycle $C$ contains two $(x_i,x_j)$-paths. Let $C[x_i,x_{i+1},\ldots, x_j]$ denote the $(x_i,x_j)$-path in $C$ containing $x_i,x_{i+1},\ldots, x_j$ (and not containing $x_{j+1}$ if $i \neq j+1$), where subscripts are modulo $n$. Such path is uniquely determined when $n \geq 3$. Similarly, set
\[\begin{aligned}
C(x_i,x_{i+1},\ldots, x_j)&=C[x_i,x_{i+1},\ldots, x_j]- \{x_i,x_j\},\\
C(x_i,x_{i+1},\ldots, x_j]&=C[x_i,x_{i+1},\ldots, x_j]-\{x_i\}.
\end{aligned}\]

\begin{lem}\label{3-graphs}
When $r(M)\geq12$ and $r^*(M)\geq5$, there are connected graphs $G_1,G_2$ and elements $e_1,e_2\in E(M)$ such that the following hold.
\begin{itemize}
\item[(1)] $M\del e_1=L(G_1),  M\del e_2=L(G_2)$,
\item[(2)] $G_1\del e_2=G_2\del e_1$ and $G_1\del e_2$ is connected,
\item[(3)] $\si(G_1\del e_2)$ is $2$-connected or a 1-sum of a 2-connected graph and a link $e$ such that the parallel class of $G_1\del e_2$ containing $e$ has exactly two edges.
\end{itemize}
\end{lem}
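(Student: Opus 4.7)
The plan is to apply Lemmas \ref{cor:del-rank} and \ref{ele-del} to select two elements $e_1,e_2\in E(M)$ whose deletions from $M$ admit bicircular lift representations, align those representations via the 2-isomorphism theorem (Theorem \ref{non-eq-repre}), and then extract the required block structure of $\si(G_1\setminus e_2)$ from the $3$-connectivity of $\co(M\setminus e_1\setminus e_2)$.

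Since $r^*(M)\ge5\ge3$, Lemma \ref{cor:del-rank} gives $e_1\in E(M)$ such that $\co(M\setminus e_1)$ is $3$-connected with a $U_{2,4}$-minor and $r(\co(M\setminus e_1))\ge r(M)-3\ge9$. As $\co(M\setminus e_1)$ is then $3$-connected with a $U_{2,4}$-minor and corank at least $r^*(M)-1\ge4$, Lemma \ref{ele-del} combined with Lemma \ref{del-rank} supplies $e_2\in E(M)\setminus\{e_1\}$ for which $\co(M\setminus e_1\setminus e_2)$ is $3$-connected with a $U_{2,4}$-minor and has rank at least $r(M)-5\ge7$. Since $M\setminus e_i$ inherits a $U_{2,4}$-minor it is non-graphic, and as a proper minor of the excluded minor $M$ it lies in $\overline{BL}$, hence is bicircular lift; let $G_1,G_2$ be connected graphs with $L(G_i)=M\setminus e_i$. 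The $3$-connectivity of $M$ forbids $2$-cocircuits, so (using $e_2\in E(\co(M\setminus e_1))$ and a symmetric argument, possibly swapping the roles of $e_1$ and $e_2$) neither $e_1$ nor $e_2$ is a coloop of the other's matroid, whence $G_1\setminus e_2$ and $G_2\setminus e_1$ are both connected. The common cosimplification has rank at least $7\ge5$, so Theorem \ref{non-eq-repre} forces $G_1\setminus e_2$ and $G_2\setminus e_1$ to be $2$-isomorphic; after applying the allowed 2-isomorphism moves to $G_2$, arrange $G_2\setminus e_1=G_1\setminus e_2=:G$, completing (1) and (2).

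For (3), suppose $\si(G)$ is not $2$-connected. Since $|V(G)|=r(L(G))=r(M)\ge12$ and $G$ is connected, $\si(G)$ is connected with a cut-vertex $v$ giving a decomposition $\si(G)=H_1\cup H_2$ with $V(H_1)\cap V(H_2)=\{v\}$ and $|V(H_i)|\ge2$. Lifting to an edge partition $E(G)=E_1\cup E_2$ (loops at $v$ allocated arbitrarily), a direct rank computation gives $r_{L(G)}(E_1)+r_{L(G)}(E_2)-r(L(G))=|V(H_1)|+|V(H_2)|-|V(G)|=1$ when both $H_i$ carry a cycle, so $(E_1,E_2)$ is a $2$-separation of $L(G)$. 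Because cycles of $G$ cannot cross the cut-vertex $v$, series pairs of $L(G)$ (matching those of $G$ by Lemma \ref{series-pair}) respect the partition, so the $2$-separation descends to $\co(L(G))=L(\co(G))$; its $3$-connectivity then forces one side, say $E_1\cap E(\co(L(G)))$, to contain at most one element.

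The main obstacle is to convert this collapse into the precise 1-sum description in (3). If $|V(H_1)|=2$ then $H_1$ is a single link $e^*=vw$, and the $2$-edge-connectedness of $G$ together with the triviality of $E_1$ in $\co$ forces the parallel class of $e^*$ in $G$ to have exactly two edges and $w$ to carry no loops, which is exactly the described 1-sum of a $2$-connected graph and a link. If $|V(H_1)|\ge3$, so $H_1$ is a $2$-connected block, one must rule this case out: the strategy is to compute $\co(G)$ explicitly (merging the series class inside $H_1$ to one representative) and then use the rank bound $|V(\co(G))|\ge7$ together with the $U_{2,4}$-minor of $\co(L(G))$ to exhibit a non-trivial $2$-separation of $L(\co(G))$, contradicting $3$-connectivity. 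The delicate subcase $H_1=K_3$—where all three edges lie in one series class and $\co(G)$ acquires a distinguished loop at $v$—requires the most careful accounting, and completing this structural analysis is the main technical step of the proof.
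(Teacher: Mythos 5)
There is a genuine gap at the pivot of your argument, in the sentence ``after applying the allowed 2-isomorphism moves to $G_2$, arrange $G_2\del e_1=G_1\del e_2$.'' Theorem \ref{non-eq-repre} only tells you that $G_1\del e_2$ and $G_2\del e_1$ are $2$-isomorphic; to make them \emph{equal} you must perform Whitney switchings on $G_2\del e_1$, and a Whitney switching of $G_2\del e_1$ need not lift to a Whitney switching of $G_2$: the $2$-vertex-cut being switched on may be destroyed by the attachment of $e_1$, and after the switch there may be no way to reinsert $e_1$ so that the resulting graph still represents $M\del e_2$. Equivalently, the switchings of $G_2\del e_1$ that extend to $G_2$ form, in general, a proper subset of all switchings, so you cannot freely move within the $2$-isomorphism class. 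This is precisely the hard part of the lemma, and it is where the paper spends almost all of its effort: it fixes the two cycles $X_1\cup e_1$, $X_2\cup e_1$ of $G_2$ through $e_1$, shows (Claims 1 and 2) that if the representations could be aligned then $X_1$ and $X_2$ would both be paths in (some graph $2$-isomorphic to) $G_1\del e_2$, and then (Claim 3 plus the closing theta-subgraph argument) produces an element $f$ such that $M\del e_1,f$ or $M\del e_1/f$ is non-graphic while every bicircular lift representation of $M\del f$ or $M/f$ violates a cyclicity condition forced by $X_1,X_2$ --- contradicting that proper minors of $M$ lie in $\overline{BL}$. None of this is automatic, and your proposal supplies no substitute for it.

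A secondary point: you locate ``the main technical step'' in ruling out the bad block decompositions for conclusion (3). In the paper that part is dispatched in one line --- if $\si(G_1\del e_2)$ failed (3), then $\co(M\del e_1,e_2)$ would have a $2$-separation (using additionally that $e_1,e_2$ avoid triads of $M$), contradicting its $3$-connectivity. Your cut-vertex rank computation is in the right spirit for that easy part, but the effort is misallocated: the substance of the lemma is the equality in (2), not the block structure in (3).
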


\begin{proof}
Since $M$ is 3-connected and has $U_{2,4}$-minors, by Lemmas \ref{ele-del}-\ref{cor:del-rank} there are elements $e_1,e_2\in E(M)$ such that $\co(M\del e_1,e_2)$ is 3-connected and has a $U_{2,4}$-minor and with $r(\co(M\del e_1,e_2))\geq7$ and such that $e_1,e_2$ are not contained in a triad of $M$. Since $\co(M\del e_1,e_2)$ is non-graphic, by Theorem \ref{non-eq-repre}, there are connected graphs $G_1$, $G_2$ and 2-isomorphic connected graphs $G_{12}, G'_{12}$ such that
\[\begin{aligned}
&M\del e_1,e_2=L(G_{12})=L(G'_{12}),\\
&M\del e_1=L(G_1),\ \ M\del e_2=L(G_2),\\
&G_{12}=G_1\del e_2,\  \ G'_{12}=G_2\del e_1.
\end{aligned}\]
Since $\co(M\del e_1,e_2)$ is 3-connected and $e_1,e_2$ are not contained in a triad of $M$, {\bf (a)} $G_{12}$ is 2-edge-connected with  at most one loop; {\bf (b)} $\si(G_{12})$ is 2-connected or a 1-sum of a 2-connected graph and a link $e$ such that the parallel class $P_e$ of $G_{12}$ containing $e$ has exactly two edges, for otherwise $\co(M\del e_1,e_2)$ has a 2-separation. Evidently, (a)-(b) also hold for $G'_{12}$, and to prove the lemma it suffices to show that $G_{12}=G'_{12}$. Assume to the contrary that $G_{12}\neq G'_{12}$ and $G_{12}, G'_{12}$ are chosen such that one can obtained from the other by as few Whitney Switching as possible.

Since $r^*(M\del e_1,e_2)=r^*(M)-2\geq3$, we have $|G_{12}|=r(M\del e_1,e_2)\geq7$. Evidently, for $\{i,j\}=\{1,2\}$, the edge $e_i$ is neither a loop nor a cut-edge of $G_j$ for otherwise it is easy to prove $G_{12}=G'_{12}$. Let $X\subseteq E(M)-\{e_1,e_2\}$ such that {\bf (c)} $G_2|X\cup e_1$ is a theta-subgraph and $G_2|X$ is a cycle or a 1-sum of $e$ and a cycle. By (b) such $X$ obviously exists. No matter which case happens, let $C$ be the unique cycle in $G_2|X$. Since $G_{12}, G'_{12}$ are 2-isomorphic, $C$ is also a cycle of $G_1$.  Let $X_1, X_2\subseteq X$ such that $X_1\cup e_1$ and $X_2\cup e_1$ are cycles in $G_2$. Evidently, when $e\notin X$, we have $C=X$ and $(X_1,X_2)$ is a partition of $C$; and when $e\in X$, we have $e\in X_1\cap X_2$,  $C=X-e$ and $(X_1-e,X_2-e)$ is a partition of $C$. \\

\noindent{\bf Claim 1.}  {\sl Neither $G_1|X_1$ nor $G_1|X_2$ is a path.  }

\begin{proof}[Subproof.]
Assume to the contrary that $G_1|X_2$ is a path. Then $G_1|X_1$ and $G_1|X_2$ are paths having the same end-vertices as $C$ is a cycle of $G_{12}$ and $G'_{12}$. Let $x_1,y_1$ (and $x'_1,y'_1$) be the end-vertices of $G_1|X_1$ (and $G_2|X_1$). That is, the end-vertices of $e_1$ in $G_2$ are $x'_1, y'_1$. Since $G_{12}, G'_{12}$ are 2-isomorphic and $X_1$ and $X_2$ are paths in $G_{12}$ and $G'_{12}$, we have that $P$ is a path in $G_{12}$ joining $x_1,y_1$ if and only if it is a path in $G'_{12}$ joining $x'_1,y'_1$. (Note that the orders of edges in $G_{12}|P$ and $G'_{12}|P$ may be not the same.) Hence, since $M\del e_1,e_2=L(G_{12})=L(G'_{12})$, it is easily to check that the graph obtained from $G_{12}$ by adding the edge $e_1$ connecting $x_1$ and $y_1$ is a graphic bicircular lift representation of $M\del e_2$. Hence, by the choice of $G_{12}, G'_{12}$ we have $G_{12}=G'_{12}$, a contradiction.
\end{proof}

Claim 1 implies that $|X|\geq4$ when $e\notin X$. Moreover, by the choice of $G_{12}$ and $G'_{12}$ we have \\

\noindent{\bf Claim 2.}  {\sl For each graph $G'_1$ $2$-isomorphic to $G_1$, neither $G'_1|X_1$ nor $G'_1|X_2$ is a path.}\\

To prove Claim 3 we need two more definitions. A path is a {\sl $X_1$-path} if its edges are in $X_1$.  Let $f=uv$ be a link of a graph $H$, and let $w$ be the vertex obtained by contracting $f$. If for every 2-vertex-cut of $H/f$ containing $w$, say $\{w,z\}$,  either $\{u,z\}$ or $\{v,z\}$ is a 2-vertex-cut of $H$, then we say that {\sl no new 2-vertex-cuts appear when $f$ is contracted in $H$}. Evidently, for some link $f$ in $E(G_1)-e_2$ if neither $G_1/f|(X_1-f)$ nor $G_1/f|(X_2-f)$ is a path and no new 2-vertex-cuts appear when $f$ is contracted in $G_1$, then  for each graph $G_f^2$ that is $2$-isomorphic to  $G_1/f$, neither $G_f^2|X_1-f$ nor $G_f^2|X_2-f$ is a path. \\

\noindent{\bf Claim 3.}  {\sl There is an edge $f$ in $E(G_{12})$ such that at least one of the following holds.
\begin{itemize}
    \item[(I)] When $G_1$ is not simple, we have
     \begin{itemize}
    \item[(i)] $M\del e_1,f$ is non-graphic; and
    \item[(ii)]  for each graph $G_f^2$ that is $2$-isomorphic to  $G_1\del f$, neither $G_f^2|X_1-f$ nor $G_f^2|X_2-f$ is a path.
    \end{itemize}
   \item[(II)] When $G_1$ is simple, we have
   \begin{itemize}
    \item[(i)] $M\del e_1/f$ is non-graphic; and
    \item[(ii)]  for each graph $G_f^2$ that is $2$-isomorphic to  $G_1/f$, neither $G_f^2|X_1-f$ nor $G_f^2|X_2-f$ is a path.
    \end{itemize}
\end{itemize}}

\begin{proof}[Subproof.]
When $G_1$ is not simple, let $f$ be a loop or an edge in a non-trivial parallel pair. Since $M\del e_1$ is non-graphic and $r^*(M\del e_1)=r^*(M)-1\geq4$, by Lemma \ref{contra-non-graphic} the matroid $M\del e_1,f$ is non-graphic. Moreover, since $G_1\del f$ and $G_1$ have the same 2-vertex-cuts, (I) (ii) holds from Claim 2.  So (I) holds.

We may therefore assume that $G_1$ is a simple graph. (b) implies that $G_{12}$ is 2-connected, so by (c) we have $C=X$. For each edge $f\in E(G_{12})$,  Lemma \ref{contra-non-graphic} implies that $M\del e_1/f$ is non-graphic. So it suffices to show that (II) (ii) holds.

We claim that every edge in $G_1$ has at least one endpoint on $C$. Assume to the contrary that there is an edge $f$ of $G_1$ that has no endpoints on $C$.  Evidently, $f\neq e_2$ and $C$ is also a cycle of $G_1/f$ and neither $G_1/f|(X_1-f)$ nor $G_1/f|(X_2-f)$ is a path by Claim 2. Since $f$ and $C$ are vertex-disjoint in $G_1$, for each Whitney Switching in $G_1/f$ changing the order of edges in $C$ there is a corresponding Whitney Switching in $G_1$ playing the same role on $C$, then (II) (ii) follows from Claim 2.

%If for each Whitney Switching $S$ in $G_1/f$ changing the order of edges in $C$ there is a corresponding Whitney Switching in $G_1$ playing the same role on $C$, then (II) (ii) follows from Claim 2. Moreover, since the two vertices of the 2-vertex-cut corresponding to $S$ must be in $V((G_1/f)|C)$, we may assume that at least one end-vertex of $f$ is in $C$. That is, each edge in $E(G_{12})$ is adjacent with $C$.

First we consider the case $V(G_1)\neq V(G_1|C)$. Let $w\in V(G_1)-V(G_1|C)$. When $|st_{G_1}(w)|=2$, for each edge $f\in st_{G_1}(w)-e_2$, no new 2-vertex-cuts appears when $f$ is contracted in $G_1$. So we may assume that $|st_{G_1}(w)|\geq3$. Then there is a unique minimal path $P_w$ with $E(P_w)\subsetneq C$ such that all vertices incident with $w$ are in $V(P_w)$. We say that $P_w$ is the {\sl neighbour path} of $w$ in $C$. Let $h$ be a vertex in $V(G_1)-(w\cup V(G_1|C))$ or an edge in $E(G_1)-C$ with its end-vertices on $C$. We say that the neighbourhoods of $w,h$ are {\sl crossing} on $C$ if there are distinct vertices $u,v,a,b\in V(G_1|C)$ with $u,v$ adjacent with $w$ and $a,b$ adjacent with or incident to $h$ such that $(u, a, v, b)$ appears in this order circularly on $C$.

We claim that if the neighbourhoods of $w,h$ are crossing on $C$ then (II) (ii) holds. When $u$ is the unique vertex in $C(a,u,b)$ adjacent with $w$, it is obvious that there is no 3-vertex-cut $\{w,u,x\}$ of $G_1$ with $x$ is on the $(a,b)$-path of $C$ containing $u$; moreover, since the neighbourhoods of $w,h$ are crossing on $C$, vertices $a$ and $b$ are connected in $G_1-\{w,u,y\}$ for each internal vertex $y$ on the $(a,b)$-path of $C$ not containing $u$. Hence, no new 2-vertex-cuts that change the order of edges on $C$ appear after the edge $wu$ is contracted. So by symmetry we may assume that there are vertices $u_1\in C(a,u,b)-u$ and $v_1\in C(a,v,b)-v$ adjacent with $w$ such that $(u, u_1, a, v_1, v, b)$ appears in the order circularly in $C$. Without loss of generality we may further assume that $e_2\notin\{wu,wv_1\}$ and $u$ and $v_1$ are the unique vertices in $C(b,u,u_1)$ and $C(a,v_1,v)$ respectively, which are adjacent with $w$. Hence, {\bf (d)} when $wu$ (or $wv_1$) is contracted, besides the vertex obtained by contracting the edge, the other vertex contained in a new 2-vertex-cut is in $C(u_1,a]$ (or $C(v,b]$). Assume that (II) (ii) does not hold when $f\in\{wu,wv_1\}$. Since no 2-vertex-cut of $G_1/wu$ or $G_1/wv_1$ has one vertex in $C(b,u,u_1,a)$ and the other in $C(a,v_1,v,b)$, by Claim 2 and (d) the paths $C[b,u,u_1,a]$ and $C[a,v_1,v,b]$ intersect $X_1$ and $X_2$. Moreover, since (II) (ii) does not hold when $f\in\{wu,wv_1\}$,  by (d) we have that {\bf (e)} for some graph $G_u$ 2-isomorphic to $G_1/wu$, the path $G_u|C[a,v_1,v,b]$ is a union of a $X_1$-path and a $X_2$-path and the two edges in $G_u|C$ incident to $b$ are in the same $X_i$; and {\bf (f)} for some graph $G_{v_1}$ 2-isomorphic to $G_1/wv_1$, the path $G_{v_1}|C[b,u,u_1,a]$ is a union of a $X_1$-path and a $X_2$-path and the two edges in $G_{v_1}|C$ incident to $a$ are in the same $X_j$. On the other hand, since the two vertices in a 2-vertex-cut of $G_1, G_1/wu$ or $G_1/wv_1$ are in $C[b,u]$, $C(u,u_1,a)$, $C[a,v_1,v]$ or $C[v,b]$, combined with (e) and (f), there is a graph $G'_1$ 2-isomorphic to $G_1$ such that $G'_1|X_1$ and $G'_1|X_2$ are paths, a contradiction to Claim 2.

We may therefore assume that the neighbourhoods of $w,h$ are non-crossing on $C$. Hence, {\bf (g)} the end-vertices of a neighbor path in $C$ of a vertex in $V(G_1)-V(G_1|C)$ consist of a 2-vertex-cut of $G_1$, and $|C|\geq5$ for otherwise $|V(G_1)|\leq6$ as $|st_{G_1}(w)|\geq3$ for each $w\in V(G_1)-V(G_1|C)$.

Assume that (II) (ii) does not hold for each edge in $E(G_{12})$. Let $G'_1$ be a graph 2-isomorphic to $G_1$ with the number of vertex disjoint $X_1$-paths as small as possible. Since the neighborhoods of $w, h$ are non-crossing on $C$ and the degree of each vertex not in $C$ is at least three in $G_1$, no new 2-vertex-cut appears when an edge in some neighbor path is contracted. Hence, {\bf (h)} $G'_1|C$ is a union of exactly two vertex-disjoint $X_1$-paths and exactly two vertex-disjoint $X_2$-paths and for each edge $p$ in some neighbor path of a vertex in $V(G'_1)-V(G'_1|C)$ when $p\in X_i$ the two edges in $G'_1|C$ adjacent with $p$ are in $X_j$, where $\{i,j\}=\{1, 2\}$. So $1\leq|V(G'_1)-V(G'_1|C)|\leq2$. Assume that for some $w\in V(G'_1)-V(G'_1|C)$ we have $|st_{G'_1}(w)|=3$. Let $u,v$ be the end-vertices of the neighbor path $P_w$ of $w$ in $C$. Then by (g) and (h) by a Whitney Switching on the 2-vertex-cut $\{u,v\}$ in $G'_1$ the sets $X_1$ and $X_2$ become paths, a contradiction to Claim 2. Hence, for each $w\in V(G'_1)-V(G'_1|C)$ we have $|st_{G'_1}(w)|\geq4$. Using (h) again we have  $\{w\}=V(G'_1)-V(G'_1|C)$ and $G'_1|(C-P_w)$ is a $X_i$-path and $G'_1|P_w$ is a 3-edge path such that the internal edge is contained in $X_i$ and the other two are contained in $X_j$, where $\{i,j\}=\{1, 2\}$. On the other hand, since $|C|=|G_{12}|-1\geq6$, we have $|E(G'_1|C-P_w))|\geq3$. Let $f\in E(G'_1|(C-P_w))$. Since the neighborhoods of $w$ and each edge in $E(G'_1)-st_{G'_1}(w)-E(C)$ are non-crossing on $C$, for each graph $G_1^f$ that is 2-isomorphic to $G'_1/f$ the graph $G_1^f|P_w$ is a 3-edge path with its internal edge in $X_i$. Hence, (II) (ii) holds as $E(G'_1|(C-P_w))\subseteq X_i$.

Secondly we consider the case $V(G_1)=V(G_1|C)$. Then $|C|=|G_{12}|\geq7$.  So by Claim 1 there is an edge $f=uv\in C$ such that neither $G_1/f|(X_1-f)$ nor $G_1/f|(X_2-f)$ is a path. If no new 2-vertex-cuts appear after $f$ contracted, then  (II) (ii) holds. So we may assume that some new 2-vertex-cut appears when $f$ is contracted. Then there are edges $g=uu_1, h=vv_1$ in $E(G_{12})-C$ such that $u_1,v_1$ are not adjacent in $C$ and $(u, v_1,u_1,v)$ occurs in this order circularly in $C$ and such that the vertex obtained by contracting $f$ and some vertex in $C(v_1,u_1)$ consist of a 2-vertex-cut of $G_1/f$. Hence, by symmetry we may assume that the end-vertices of $e_1$ are in $C[u,v_1,u_1)$. Then $C[u,v_1,u_1]\cup\{g,e_1\}$ is a theta-subgraph. Moreover, since $C[u,v_1,u_1]\cup g$ is a cycle of $G_{12}$ and $V(G_1)\neq V(G_1|C[u,v_1,u_1])$, the result (II) (ii) follows from the first case.
\end{proof}

Let $f$ be an edge satisfying Claim 3. When Claim 3 (I) holds, set $M_f:=M\del f, G_f^1:=G_1\del f$; and when Claim 3 (II) holds, set $M_f:=M/f$,  $G_f^1:=G_1/f$. Let $G_f^2$ be a graph 2-isomorphic to $G_f^1$ that can be extended to a graph $G_f$ with $M_f=L(G_f)$. Since $M$ is an excluded minor of $\overline{BL}$, by Theorem \ref{non-eq-repre} such $G_f^2$ exists.  For each cycle $C'$ of $G_f^2\del e_2$, since $C'$ or $C'\cup f$ is a cycle of $G_{12}$ and $X_i\cup e_1$ is a cycle of $G_2$ for each $1\leq i\leq 2$, {\bf (i)} the set $(C'\cup X_i\cup e_1)-f$ is cyclic in $M_f$, where a subset $Y$ of the ground set of a matroid $N$ is {\sl cyclic} in $N$ if $N|Y$ has no coloops.

Since $M_f\del e_1,e_2=L(G_f^2\del e_2)$ and $M_f\del e_1,e_2$ has at least two circuits by the fact that  $r^*(M_f\del e_1,e_2)\geq r^*(M)-3\geq2$, there is a subgraph $P$ of $G_f^2\del e_2$ such that either $C\cup P$ is a theta-subgraph or $C$ and $P$ are cycles with at most one common vertex. When the latter case happens, since neither $G_f^2|X_1-f$ nor $G_f^2|X_2-f$ is a path by Claim 3, $(P\cup X_1\cup e_1)-f$ is not cyclic in $M_f$, a contradiction to (i). So $C\cup P$ is a theta-subgraph. Let $C_1,C_2$ be the cycles in $C\cup P$ containing $P$, and $P_i=E(C_i)-E(P)$ for $1\leq i\leq2$. Since $C\cup P$ is a theta-subgraph and for any $1\leq i,j\leq 2$ the set $(C_j\cup X_i\cup e_1)-f$ is cyclic in $M_f$ by (i), both $P_1-f$ and $P_2-f$ intersect $X_1$ and $X_2$; so $G_f|X\cup P\cup e_1$ is a $K_4$-subdivision and $C$ is a union of exactly two vertex-disjoint $X_1$-paths and exactly two vertex-disjoint $X_2$-paths. Moreover, since $M_f\del e_1,e_2=L(G_f^2\del e_2)$ and $M_f\del e_1,e_2$ has at least two circuits, $E(G_f^2)\neq X\cup E(P)$. Hence, there is another cycle $C'$ of $G_f^2\del e_2$ such that $(C'\cup X_i\cup e_1)-f$ is not cyclic in $M_f$, a contradiction to (i).
\end{proof}

Recall that  $\mathrm{loop}(G)$ is the set consisting of loops of a graph $G$.

\begin{lem}\label{lift-bicircular}
Let $N$ be a $3$-connected matroid with $r(N)\geq10$ and $r^*(N)\geq4$. Let $G$ be a connected graph with $E(N)=E(G)$. Assume that all proper minors of $N$ are in $\overline{BL}$ and there are $e_1,e_2\in E(N)$ such that the following statements hold.
\begin{itemize}
\item[(1)] $N\del e_1=L(G\del e_1),  N\del e_2=L(G\del e_2)$,
\item[(2)] $r(\co(N\del e_1)), r(\co(N\del e_2))\geq7$,
\item[(3)] $G\del e_1,e_2$ is $2$-edge-connected,
\item[(4)] $\si(G\del e_1,e_2)$ is $2$-connected or a $1$-sum of $K_2$ and a $2$-connected graph.
\end{itemize}
Then $N=L(G)$.
\end{lem}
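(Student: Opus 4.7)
The plan is to verify that $N$ and $L(G)$ have the same circuit collection. By hypothesis~(1), $N\del e_i = L(G)\del e_i$ for $i\in\{1,2\}$, so the circuits of $N$ avoiding $e_1$ (respectively $e_2$) coincide with those of $L(G)$ avoiding $e_1$ (respectively $e_2$). Thus it will suffice to show that
\[
\mathcal{C}^{12}(N) := \{C \in \mathcal{C}(N) : \{e_1,e_2\}\subseteq C\} \ \text{equals}\ \mathcal{C}^{12}(L(G)),
\]
where $\mathcal{C}^{12}(L(G))$ is defined analogously.

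For the inclusion $\mathcal{C}^{12}(L(G))\subseteq\mathcal{C}^{12}(N)$, I would take $C\in\mathcal{C}^{12}(L(G))$ and analyze the three possible shapes of $G|C$: a theta-subgraph, two cycles meeting at a single vertex, or two vertex-disjoint cycles, with both $e_1$ and $e_2$ as edges. Since $C-e_i$ is independent in $L(G)\del e_i = N\del e_i$, it is independent in $N$ for each $i$; so it would suffice to show that $C$ is dependent in $N$, for then $C$ would be a circuit. I plan to use hypothesis~(3) (the $2$-edge-connectedness of $G\del e_1, e_2$) together with hypothesis~(4) to produce auxiliary cycles in $G$ avoiding $e_1$ or $e_2$. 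These cycles yield circuits of $L(G)\del e_1 = N\del e_1$ or of $N\del e_2$ that meet $C$ appropriately, and combining a well-chosen pair of them via strong circuit elimination in $N$ should produce a dependent subset of $C$. For the reverse inclusion, let $C\in\mathcal{C}^{12}(N)$. Since $C-e_1$ and $C-e_2$ are independent in $N$, both are independent in $L(G)$, so every component of $G|(C-e_i)$ has at most one cycle for each $i\in\{1,2\}$. Combining these constraints would force $G|C$ to contain at least two cycles, and the minimality of $C$ as a circuit of $N$ would force $G|C$ to be edge-minimal for this property, so that $C\in\mathcal{C}(L(G))$.

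The main obstacle is the case analysis: depending on whether $e_1, e_2$ are loops or links of $G$, whether they share an endpoint, and whether either lies in the distinguished parallel pair arising from the possible $1$-sum with $K_2$ in hypothesis~(4), the shape of $G|C$ and the construction of the auxiliary cycles differ, producing several subcases. Verifying minimality in $L(G)$ in the reverse direction will be particularly delicate: the rigidity supplied by the rank hypothesis~(2), the $2$-connectedness of $\si(G\del e_1,e_2)$ in hypothesis~(4), and Theorem~\ref{non-eq-repre} (which constrains any alternative bicircular lift representation of $N\del e_1,e_2$ to be $2$-isomorphic to $G\del e_1,e_2$) together supply what is needed, but assembling them carefully across the subcases is the crux.
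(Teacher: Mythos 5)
Your skeleton --- reduce to the circuits meeting both $e_1$ and $e_2$, then use auxiliary cycles, circuit elimination, and orthogonality with the cocircuits inside vertex stars --- is the same as the paper's, but two of your key steps do not go through as stated. The more serious one is in your reverse inclusion: from $C-e_1$ and $C-e_2$ being independent in $L(G)$ you conclude that $G|C$ must contain at least two cycles, but this is false. A single cycle of $G$ passing through both $e_1$ and $e_2$ satisfies both independence conditions while $G|C$ has only one cycle, so nothing you have said prevents $N$ from having such a set as a circuit even though it is independent in $L(G)$. Ruling this out is exactly the paper's Claim 3, and it is not a combinatorial consequence of (1)--(4): one must show (using hypothesis (2) together with Lemmas \ref{graph} and \ref{del-rank}) that $N/e_1$ is non-graphic, hence bicircular lift because all proper minors of $N$ lie in $\overline{BL}$, and then invoke Theorem \ref{non-eq-repre} to see that every bicircular lift representation of $N/e_1$ extends a graph $2$-isomorphic to $G/e_1\del e_2$, in which $C-\{e_1,e_2\}$ is a forest, so no placement of $e_2$ can make $C-e_1$ a circuit --- a contradiction. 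Your proposal cites Theorem \ref{non-eq-repre} only for ``minimality,'' but the missing step is dependence itself.

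The second gap is in the forward inclusion: the auxiliary cycles you plan to eliminate against need not exist. For a handcuff $C_1\cup C_2$ with $e_1\in C_1$ and $e_2\in C_2$, the elimination argument needs a third cycle $C_3$ positioned so that $C_1\cup C_3$ and $C_2\cup C_3$ are circuits of $N\del e_2$ and $N\del e_1$ respectively; when every cycle of $G$ other than $C_1,C_2$ meets both $E(C_1)$ and $E(C_2)$, no such $C_3$ exists, and the paper shows $G$ is then a $3$-connected cubic graph consisting of two disjoint cycles joined by a perfect matching. That degenerate configuration occupies half of the paper's Claim 2 and is resolved only by deleting a matching edge $f'$ chosen (using $r(N)\ge 10$) so that $\co(G\del e_1,f')$ stays $3$-connected, showing $N\del f'$ is bicircular lift, and again applying Theorem \ref{non-eq-repre}. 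As written, your argument never uses $r(N)\ge 10$, $r^*(N)\ge 4$, hypothesis (2), or the minor-minimality of $N$, which is a reliable sign that the real content of the lemma has been skipped.
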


\begin{proof}
Since $N\del e_2$ is connected and $N\del e_2=L(G\del e_2)$, the graph $G\del e_2$ is 2-edge-connected. Then $e_1$ is not a cut-edge of $G$. So by (1) we have \[|V(G)|=|V(G\del e_1)|=r(N\del e_1)=r(N)\geq10.\]

\noindent{\bf Claim 1.}  {\sl For each vertex $v$ of $G$, the set $st_G(v)-\mathrm{loop}(G)$ is a union of cocircuits of $N$. }

\begin{proof}[Subproof.]
Since $st_G(v)-\mathrm{loop}(G)-e_i$ is a union of cocircuits of $N\del e_i$ for each $1\leq i\leq 2$ by (1), the set $st_G(v)-\mathrm{loop}(G)$ is a union of cocircuits of $N$.
\end{proof}

\noindent{\bf Claim 2.}  {\sl Let $C_1,C_2$ be cycles of $G$ with at most one common vertex and with $e_1\in C_1, e_2\in C_2$. Then $C_1\cup C_2\in\mathcal{C}(N).$}

\begin{proof}[Subproof.]
Assume that there is another cycle $C_3$ of $G$ such that $C_2$ and $C_3$ have at most one common vertex.  Without loss of generality we may further assume that either $C_1\cup C_3$ is a theta-subgraph with $e_1\notin C_3$ or $C_1$ and $C_3$ have at most one common vertex. Since $C_2\cup C_3$ and $C_1\cup C_3$ are circuits of $N$ by (1), for any $f\in E(C_3)-E(C_1\cup C_2)$ the set $(C_1\cup C_2\cup C_3)-f$ contains a circuit of $N$. Since a circuit and cocircuit of a matroid can not have exactly one common element, by Claim 1 we have $C_1\cup C_2\in\mathcal{C}(N).$ So by symmetry we may assume that except $C_1,C_2$ each cycle of $G$ intersects $E(C_1)$ and $E(C_2)$, implying that $G$ is a simple graph. Hence, by (3) and (4) the graph $\si(G\del e_1,e_2)$ is $2$-connected. Moreover, since $N$ is 3-connected, by Claim 1 we have $\delta(G)\geq3$, where $\delta(G)$ is the minimum degree of $G$.

Assume that $C_1,C_2$ have a common vertex $v$. Let $P$ be a shortest path of $G$ with $v\notin P$ joining $C_1$ and $C_2$. Since $\si(G\del e_1,e_2)$ is $2$-connected, such $P$ exists. Let $C_3$ be the cycle of $G$ with $P\subseteq C_3\subseteq (C_1\cup C_2\cup P)-\{e_1,e_2\}$. Since $C_2\cup C_3$ and $C_1\cup C_3$ are circuits of $N$ by (1), for any $f\in E(P)-E(C_1\cup C_2)$ the set $(C_1\cup C_2\cup P)-f$ contains a circuit of $N$; so $C_1\cup C_2\in\mathcal{C}(N).$ So we may assume that $V(C_1)\cap V(C_2)=\emptyset$. Since $\delta(G)\geq3$ and except $C_1,C_2$ each cycle of $G$ intersects $E(C_1)$ and $E(C_2)$, {\bf (a)} the graph $G$ is a 3-connected 3-regular graph with $V(G)=V(C_1)\cup V(C_2)$, $|V(C_1)|=|V(C_2)|$ and such that $E(G)-E(C_1\cup C_2)$ is a perfect matching of $G$.

Assume that the claim is not true. Then for each $f\in E(G)-E(C_1\cup C_2)$ we have $C_1\cup C_2\cup f\in\mathcal{C}(N)$. Since $|C_1|=\frac{|V(G)|}{2}\geq5$, there is $f'\in E(G)-E(C_1\cup C_2)$ such that no three-edge path containing $e_1, f'$. Hence, $\co(G\del e_1,f')$ is 3-connected by (a), so for every graph $H$ 2-isomorphic to $G\del e_1,f'$ we have $\co(H)\cong\co(G\del e_1,f')$ and each series class of $G\del e_1,f'$ is a path of $G\del e_1,f'$ and $H$. On the other hand, since  by (2) and Lemma \ref{del-rank} we have $r(\co(N\del e_1,f'))\geq 5$, by Lemma \ref{graph} the matroid $N\del f'$ is non-graphic. So $N\del f'$ is bicircular lift. Moreover, since $N\del e_1,f'=L(G\del e_1,f')$, some graph $H$ 2-isomorphic to $G\del e_1,f'$ can be extended to a graphic bicircular lift representation of $N\del f'$ by Theorem \ref{non-eq-repre}. Since $\co(H)\cong\co(G\del e_1,f')$ and each series class of $G\del e_1,f'$ is a path of $G\del e_1,f'$ and $H$, it is no way to add $e_1$ to $H$ such that $C_1\cup C_2\cup f$ is a theta-subgraph or a handcuff for an edge $f\in E(G)-E(C_1\cup C_2\cup f')$. So $C_1\cup C_2\cup f\notin\mathcal{C}(N)$,a contradiction.
\end{proof}

\noindent{\bf Claim 3.}  {\sl Each cycle of $G$ containing $e_1,e_2$ is independent in $N$.}

\begin{proof}[Subproof.]
Assume to the contrary that some cycle $C$ of $G$ containing $e_1,e_2$ is dependent in $N$. Then $C\in\mathcal{C}(N)$ by Claim 1. Evidently, $N/e_1\del e_2=L(G/e_1\del e_2)$ and $r(\co(N/e_1\del e_2))\geq 6$ by (1) and (2). Since $r(\co(N/e_1))\geq r(\co(N/e_1\del e_2))\geq 6$, by Lemma \ref{graph} the matroid $N/e_1$ is non-graphic. So $N/e_1$ is  bicircular lift. Moreover, since $r^*(N)\geq4$ implies $r^*(N/e_1\del e_2)\geq3$, by (3) and Theorem \ref{non-eq-repre} some graph $H$ that is 2-isomorphic to $G/e_1\del e_2$ can be extended to a graphic bicircular lift representation of $N/e_1$, which is not possible, since $C-\{e_1,e_2\}$ is a forest in $H$, it is no way adding $e_2$ to $H$ such that $C-e_1$ is a theta-subgraph or a handcuff.
\end{proof}

Next, we prove that $N=L(G)$. To Prove the result, by Claims 2 and 3 it suffices to show that each theta-subgraph $T$ of $G$ containing $e_1,e_2$ is in $\mathcal{C}(N)$. Note that $e_1,e_2$ maybe a series pair  of $G|T$. Let $P$ be a path internally disjoint with $T$ with its end-vertices on $T$. By (4) we can further assume that $P$ is chosen such that there are cycles $C_1,C_2$ of $G$ with $e_1\in C_1\subseteq (T\cup P)-e_2$ and $e_2\in C_2\subseteq (T\cup P)-e_1$. Let $C$ be the cycle of $G$ with $C\subseteq (T\cup P)-\{e_1,e_2\}$. Evidently, such $C$ exists and $P\subseteq (C\cup C_1)\cap (C\cup C_2)$. Moreover, since $C\cup C_1, C\cup C_2\in\mathcal{C}(N)$, for each $f\in P$ there is a set $C'\in\mathcal{C}(N)$ with $C'\subseteq (T\cup P)-e$. Claim 1 implies that $C'=T$. Hence, the lemma holds.
\end{proof}

The following result follows immediately from Lemmas \ref{del-rank}, \ref{3-graphs} and \ref{lift-bicircular}.

\begin{lem}\label{rank-corank}
Either $r(M)\leq 11$ or $r(M^*)\leq4$.
\end{lem}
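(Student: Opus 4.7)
The plan is to argue by contradiction: suppose both $r(M)\geq 12$ and $r^*(M)\geq 5$, and derive that $M = L(G)$ for some graph $G$, contradicting that $M$ is an excluded minor of $\overline{BL}$. The two preceding lemmas are designed to dovetail for exactly this purpose — Lemma \ref{3-graphs} produces graphs representing the two single-element deletions $M\del e_1$ and $M\del e_2$ that are compatible after deleting the other marked element, and Lemma \ref{lift-bicircular} promotes any such compatible pair to a representation of the ambient matroid.

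First I would apply Lemma \ref{3-graphs} to $M$ to obtain connected graphs $G_1, G_2$ and elements $e_1, e_2 \in E(M)$ satisfying its conclusions (1)--(3). Writing $H := G_1\del e_2 = G_2\del e_1$, I would define a graph $G$ on the ground set $E(M)$ by inserting $e_1$ into $H$ at the location prescribed by $G_1$ and $e_2$ at the location prescribed by $G_2$; then $G\del e_i = G_i$ and $G\del e_1,e_2 = H$. The target equality is $M = L(G)$.

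Next I would verify the hypotheses of Lemma \ref{lift-bicircular} with $N := M$. The rank conditions $r(M)\geq 10$ and $r^*(M)\geq 4$ follow from the standing inequalities, and all proper minors of $M$ lie in $\overline{BL}$ since $M$ is an excluded minor. Hypothesis (1) follows from Lemma \ref{3-graphs}(1) together with $G\del e_i = G_i$; hypothesis (4) is Lemma \ref{3-graphs}(3) restated for $H$; and hypothesis (3) holds because in the potential 1-sum case the pendant link of $\si(H)$ has a parallel partner in $H$, producing a length-two cycle that prevents any cut edge. For hypothesis (2), I would invoke the fact — established inside the proof of Lemma \ref{3-graphs}, and supported by Lemma \ref{del-rank} for the internal rank drops that force $r(\co(M\del e_1,e_2))\geq 7$ — that $e_1, e_2$ are not contained in any triad of $M$. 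Combined with the 3-connectivity of $M$ this rules out coloops and 2-cocircuits in $M\del e_i$, so $\co(M\del e_i) = M\del e_i$ and $r(\co(M\del e_i)) = r(M)\geq 12\geq 7$. Lemma \ref{lift-bicircular} then yields $M = L(G)$, the desired contradiction, and hence $r(M)\leq 11$ or $r(M^*)=r^*(M)\leq 4$.

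The only nontrivial point in the bookkeeping, and the one I would flag as the main obstacle, is the verification of hypothesis (3): mere connectivity of $H$ (which is all Lemma \ref{3-graphs}(2) supplies directly) is not enough; one needs genuine 2-edge-connectivity, and this forces one to draw on the refinement in Lemma \ref{3-graphs}(3) that the pendant link in the 1-sum case comes equipped with a parallel partner in $H$. Once this parallel-pair detail is in hand, the rest of the deduction is immediate plumbing.
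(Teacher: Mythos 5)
Your proof is correct and follows exactly the route the paper takes: the paper's own ``proof'' is the single sentence that the lemma follows immediately from Lemmas \ref{del-rank}, \ref{3-graphs} and \ref{lift-bicircular}, and your write-up is a faithful expansion of that one-liner, including the genuinely necessary point that hypothesis (2) of Lemma \ref{lift-bicircular} must be obtained from the triad-avoidance of $e_1,e_2$ recorded only inside the proof of Lemma \ref{3-graphs}, and that hypothesis (3) needs the parallel-partner clause of Lemma \ref{3-graphs}(3). The only blemish is an indexing slip in the construction of $G$: since $e_1\in E(G_2)$ and $e_2\in E(G_1)$, you must insert $e_1$ into $H$ at the location prescribed by $G_2$ and $e_2$ at the location prescribed by $G_1$, so that $G\del e_1=G_1$ and $G\del e_2=G_2$.
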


%\begin{proof}
%Assume to the contrary that $r(M)\geq 12$ or $r(M^*)\geq5$. For each $e\in E(M)$, since $M\del e$ and $M/e$ are  connected and $r^*(\co(M\del e)), r^*(\co(M/ e))\geq 4$, by  Lemma \ref{graph} we have $M\del e$ is non-graphic. So it is lift bicircular. Moreover, by Lemmas \ref{3-graphs} and \ref{lift-bicircular} the matroid $M$ is lift bicircular, a contradiction.
%\end{proof}

\begin{lem}\label{corank<4}
When $r(M^*)\leq4$, we have $r(M)\leq 9$.
\end{lem}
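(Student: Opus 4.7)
The plan is to assume for contradiction that $r(M)\geq10$ while $r^*(M)\leq4$, and to derive a contradiction by exhibiting a graph $G$ with $M=L(G)$, which would contradict $M$ being an excluded minor of $\overline{BL}$. The strategy parallels the proof of Lemma \ref{rank-corank}, but must compensate for the fact that Lemma \ref{3-graphs} is unavailable in this range.

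First, I would dispose of the very low corank cases. Since $M$ is 3-connected with a $U_{2,4}$-minor we have $r^*(M)\geq 2$. If $r^*(M)=2$, then $M^*$ is a 3-connected simple rank-2 matroid, so $M^*\cong U_{2,r(M)+2}$ and $M\cong U_{r(M),r(M)+2}$; an explicit bicircular lift representation (a $K_2^n$-like multigraph together with the right parallel classes) shows such a matroid lies in $\overline{BL}$ for $r(M)\geq10$, contradicting that $M$ is an excluded minor. The case $r^*(M)=3$ is handled analogously from the classification of 3-connected simple rank-3 matroids, using Lemma \ref{graph} and Theorem \ref{ex-mi-of-bi} to rule out each possibility.

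The core case is $r^*(M)=4$ with $r(M)\geq10$. Since $r^*(M)\geq3$, Lemma \ref{cor:del-rank} yields an element $e_1\in E(M)$ such that $\co(M\del e_1)$ is 3-connected with a $U_{2,4}$-minor and $r(\co(M\del e_1))\geq r(M)-3\geq7$. Now $M\del e_1$ is a proper minor of $M$, hence bicircular lift (non-graphic, in $\overline{BL}$), with $r^*(M\del e_1)=3\geq3$, so Lemma \ref{ele-del} applies to produce a second element $e_2$ with $\co(M\del e_1,e_2)$ 3-connected carrying a $U_{2,4}$-minor; by a symmetric choice I arrange also that $r(\co(M\del e_2))\geq7$. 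Theorem \ref{non-eq-repre} then gives essentially unique connected bicircular lift representations $G_1$ of $M\del e_1$ and $G_2$ of $M\del e_2$. A Whitney-switching analysis following the proof of Lemma \ref{3-graphs}, adapted to the tighter corank budget, shows that we may take $G_1\del e_2=G_2\del e_1=:G_{12}$ and that $G_{12}$ is 2-edge-connected with $\si(G_{12})$ either 2-connected or a 1-sum of $K_2$ with a 2-connected graph. Adjoining $e_1,e_2$ back yields a graph $G$ with $M\del e_i=L(G\del e_i)$ for each $i$, and Lemma \ref{lift-bicircular} applies to give $M=L(G)$, a contradiction.

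The principal obstacle is the Whitney-switching analysis in the reduced corank regime. The proof of Lemma \ref{3-graphs} relied on $r^*(M\del e_1,e_2)\geq3$ to apply Lemma \ref{del-rank} and Remark \ref{remark:series-pair} freely to the two-element deletion; here we only have $r^*(M\del e_1,e_2)=2$, so some substeps (notably Claim 3 of that proof, where theta-subgraphs through $e_1$ are traced within $G_2$) must be replaced by direct structural arguments bounding the number of series classes and controlling the interaction between $X_1$- and $X_2$-paths on the reference cycle $C$. Verifying that the resulting $G_{12}$ still satisfies conditions (3)-(4) of Lemma \ref{lift-bicircular}, rather than degenerating into a configuration producing additional 2-separations, is the technically delicate point on which the whole argument hinges.
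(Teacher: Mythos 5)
Your proposal has genuine gaps, and it also misses the point of why the hypothesis $r(M^*)\leq 4$ makes this lemma easy. The paper's proof is a short degree count: by the Splitter Theorem take $f$ with $N\in\{M\del f, M/f\}$ $3$-connected with a $U_{2,4}$-minor; then $N$ is a proper minor of $M$, hence bicircular lift, so $N=L(G)$. Three-connectivity of $N$ forces every vertex of $G$ to meet at least three links (else $st_G(v)-\mathrm{loop}(G)$ yields a cocircuit of size at most $2$), so $2|E(G)|\geq 3|V(G)|$; since a basis of $L(G)$ has $|V(G)|$ elements and $r^*(N)\leq 4$, we get $|E(G)|\leq |V(G)|+4$, whence $|V(G)|\leq 8$ and $r(M)\leq r(N)+1\leq 9$. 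No representation of $M$ itself, no Whitney-switching analysis, and no case split on $r^*(M)$ is needed.

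Concretely, three steps of your argument do not go through. First, in the case $r^*(M)=2$ you assert that $U_{r,r+2}$ is bicircular lift for $r\geq 10$; it is not. Any connected graph $G$ with $L(G)=U_{r,r+2}$ would have $r$ vertices, $r+2$ edges, and (by $3$-connectivity of the matroid) minimum degree $3$, which forces $r\leq 4$ by the same degree count as above. (The conclusion of that case can still be rescued, e.g.\ because $U_{5,7}\notin\overline{BL}$ sits inside $U_{r,r+2}$ as a proper minor, but not by the representation you claim.) Second, the case $r^*(M)=3$ is dismissed by appeal to ``the classification of $3$-connected simple rank-$3$ matroids''; no such usable classification exists, so nothing is actually proved there. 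Third, and most seriously, the entire content of the main case $r^*(M)=4$ is deferred to an unproven ``adaptation'' of the Whitney-switching analysis of Lemma \ref{3-graphs} to the regime $r^*(M\del e_1,e_2)=2$, where Lemma \ref{del-rank} and Remark \ref{remark:series-pair} (which that analysis leans on) are no longer available. You correctly identify this as the point ``on which the whole argument hinges,'' but identifying the obstacle is not the same as overcoming it; as written the core case is a gap. The lesson here is that a corank bound is itself strong structural information about a bicircular lift representation of a large proper minor, and exploiting it directly is far cheaper than rebuilding the rank-reduction machinery.
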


\begin{proof}
Evidently, $r(M^*)\geq2$ for otherwise $M$ is graphic. Let $f$ be an element of $M$ such that $N$ is 3-connected with $U_{2,4}$-minors for some matroid $N\in\{M\del f, M/f\}$. Let $B$ be a basis of $N$ and $G$ a graph with $N=L(G)$. Since $N$ is 3-connected, $G$ is 2-edge-connected and has no degree-2 vertices. Moreover, since $2\leq |E(N)-B|\leq4$ and $G|B$ is a spanning graph with a unique cycle,  the graph $G|B$ has at most four degree-1 vertices and $r(N)=|V(G)|\leq 8$. So $r(M)\leq 9$.
\end{proof}

Elements $x$ and $x^{\prime}$ of a matroid $N$ are {\sl clones} if the function exchanging $x$ with $x^{\prime}$ and fixing every other points in $E(N)$ is an automorphism of $N.$ A set $X\subseteq{E(N)}$ is a {\sl clonal set} of $N$ if every pair of  elements of $X$ are clones.

\begin{lem}\label{long-line}
$M$ has no $U_{2,5}$-restriction.
\end{lem}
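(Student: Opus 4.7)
The plan is to assume a $U_{2,5}$-restriction on $L=\{a_1,\ldots,a_5\}$ and then derive a bicircular lift representation of $M$ itself, contradicting $M\in\mathcal{E}x$. First observe that five parallel edges form a graph whose bicircular lift matroid is $U_{2,5}$, so $U_{2,5}\in\overline{BL}$; hence $M\neq U_{2,5}$ and $|E(M)|\ge 6$. For every $i$, the proper minor $M\setminus a_i$ contains a $U_{2,4}$-restriction, is therefore non-graphic, and lies in $\overline{BL}$; consequently $M\setminus a_i=L(G_i)$ for a connected graph $G_i$.

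The key structural observation is a classification of $U_{2,4}$- and $U_{2,5}$-restrictions inside $L(G)$. A short case analysis of $2$- and $3$-element circuits (a $2$-circuit of $L(G)$ is a pair of loops of $G$; a $3$-circuit is a $3$-edge parallel class or a $2$-edge parallel class together with a loop) shows that a $U_{2,4}$-restriction of $L(G)$ is either a $4$-edge parallel class of $G$ or a single loop together with a $3$-edge parallel class, and a $U_{2,5}$-restriction is either a $5$-edge parallel class or a single loop together with a $4$-edge parallel class. Moreover, since $M$ is $3$-connected with at least six elements it has no parallel pair, so no $G_i$ can contain two loops; when $L\setminus a_i$ appears in $G_i$ as a loop plus a $3$-parallel, the loop must be one of $a_2,\ldots,a_5$.

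Next I would align the various representations via Theorem~\ref{non-eq-repre}: for distinct $i,j$, both $G_i\setminus a_j$ and $G_j\setminus a_i$ are connected bicircular lift representations of $M\setminus a_i\setminus a_j$, so provided the hypothesis $|\co(G_i\setminus a_j)|\ge 5$ holds, they are $2$-isomorphic; the few small matroids where the hypothesis fails can be disposed of by direct inspection. Working inside $G_1$, I then add a single new element $a_1$ in the unique legal position: either extending the existing $\{a_2,\ldots,a_5\}$-parallel class to size five, or placing $a_1$ consistently with the loop-plus-$3$-parallel structure detected in the other $G_j$. The classification above forces this placement, and the alignment step guarantees that the resulting connected graph $G$ satisfies the property that $G\setminus a_i$ is $2$-isomorphic to $G_i$ for every $i$. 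The configuration in which two distinct elements of $L$ would both have to appear as loops of $G$ is ruled out because $L(G)$ would then contain a $2$-circuit inside $L$, contrary to the $U_{2,5}$-structure.

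Finally I verify $L(G)=M$. Since $r_M(L)=2$, a circuit $C$ of $M$ meeting $L$ in three or more elements satisfies $r_M(C\cap L)\le 2$, so $C\cap L$ is dependent; minimality of $C$ forces $C=C\cap L$ to be a triangle of the $U_{2,5}$, and the identical argument applies in $L(G)$. Every other circuit of either matroid avoids some $a_i$ and is therefore a circuit of $M\setminus a_i=L(G)\setminus a_i$. Hence $M=L(G)\in\overline{BL}$, a contradiction. The main obstacle is the alignment step: choosing compatible $2$-isomorphisms among the various $G_i\setminus a_j$ so that a single graph $G$ realizes every $G\setminus a_i$ up to $2$-isomorphism, which is exactly the rigidity provided by Theorem~\ref{non-eq-repre} combined with the tight local classification above.
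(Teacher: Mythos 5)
There is a genuine gap, and it sits exactly where you locate "the main obstacle": the alignment step is asserted rather than proved, and your final verification depends entirely on it. Your closing argument (circuits meeting $L$ in at least three elements are triangles of the line; every other circuit avoids some $a_i$ and is then compared in $M\del a_i$ versus $L(G)\del a_i$) is only valid if $L(G\del a_i)=M\del a_i$ holds for enough indices $i$, and for $i\geq 2$ this is precisely the alignment claim. Writing that "the alignment step guarantees" the conclusion is circular. To close it you would have to show that adding $a_1$ to $G_1\del a_i$ in your chosen position yields a graph $2$-isomorphic to $G_i$; this requires tracking how the $2$-isomorphism of Theorem~\ref{non-eq-repre} between $G_1\del a_i$ and $G_i\del a_1$ carries the relevant parallel class, and reconciling the loop-plus-parallel configurations across the different $G_i$ (for instance, ruling out that $a_1$ is the loop of some $G_i$ while it is a parallel edge of your $G$). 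Moreover, Theorem~\ref{non-eq-repre} only applies when $|\co(G_1\del a_i)|\geq 5$, i.e.\ $r(\co(M\del a_1,a_i))\geq 5$; nothing in the lemma's hypotheses bounds this from below, and when it fails the two representations need not be $2$-isomorphic at all, so "a few small matroids disposed of by direct inspection" is not an identified finite list but an open-ended family of cases.

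The paper takes a genuinely different and shorter route that avoids both problems. It picks $e\in X$, takes the parallel class $X_1\subseteq X-e$ (of size at least $3$) in a representation of $M\del e$, and then proves by a circuit-elimination argument that $X_1$ is a clonal set of $M$ itself, not merely of $M\del e$. Once two elements $e_1,e_2\in X_1$ are known to be clones of $M$, every circuit of $M$ through $e_1$ is obtained from a circuit of $M\del e_1=L(G_1)$ through $e_2$ by the exchange, so placing $e_1$ parallel to $e_2$ in $G_1$ manifestly realizes $M$, with no need to compare five different representations. Your local classification of $U_{2,4}$- and $U_{2,5}$-restrictions of $L(G)$ is correct and is implicitly used by the paper too; if you want to salvage your plan, the cleanest fix is to upgrade your "unique legal position" observation to the statement that the parallel elements are clones of $M$ --- which is exactly the paper's key claim.
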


\begin{proof}
Assume otherwise. Let $X$ be a subset of $E(M)$ with $M|X=U_{2,n}$ and $n\geq5$. We claim that there is a clonal set $X_1\subseteq X$ of $M$ with $|X_1|\geq3$. Let $e\in X$. Since $M\del e$ is non-graphic, there is a graph $H$ with $M\del e=L(H)$.  Let $X_1$ be the parallel class of $H$ with $X_1\subset X$. Evidently, $|X_1|\geq3$ and $X_1$ is a clonal set of $M\del e$ as $M\del e=L(H)$. Next we prove that $X_1$ is also a clonal set of $M$. Assume otherwise. Then there are $e_1,e_2\in X_1$ and an independent set $I$ of $M$ with $I\subseteq E(M)-X$ such that $I\cup\{e,e_1\}\in\mathcal{C}(M)$ and $I\cup\{e,e_2\}\notin\mathcal{C}(M)$. Since $\{e,e_1,e_2\}\in\mathcal{C}(M)$, there is a circuit $C$ of $M$ with $e_2\in C\subseteq I\cup\{e,e_2\}$. Moreover, since $X_1$ is a clonal set of $M\del e$ and $I\cup e_1$ is independent in $M$, there is a set $I_1\subseteq I$ with $C=I_1\cup\{e,e_2\}$. Since $I\cup\{e,e_2\}\notin\mathcal{C}(M)$, we have $I_1\neq I$. Then there is a circuit $C_1$ of $M$ with $C_1\subseteq (C\cup\{e,e_1,e_2\})-e_2\subseteq I_1\cup\{e,e_1\}$, a contradiction to the fact $I\cup\{e,e_1\}\in\mathcal{C}(M)$. So the claim holds.

Let $G_1$ be a graph with $M\del e_1=L(G_1)$. Since $X_1$ is a clonal set of $M$ with at least three elements, $G_1|X_1-e_1$ is a non-trivial parallel class. Add the edge $e_1$ to $G_1$ get a graph $G$ such that $G|X_1$ is a parallel class. Since $X_1$ is a clonal set of $M$, it is easy to verify that $M=L(G)$, a contradiction.
\end{proof}

\begin{proof}[Proof of Theorem \ref{main-thm}.]
When $M$ is not 3-connected, Theorems \ref{1-con} and \ref{2-con} imply that $M$ is direct sum of $U_{2,4}$ and a loop. So we may assume that $M$ is 3-connected. By Lemmas \ref{rank-corank} and \ref{corank<4} we  have $r(M)\leq 11$. When $r(M^*)\leq 2$, we have $|E(M)|\leq13$. So we may assume that $r(M^*)\geq 3$. By Lemma \ref{cor:del-rank}, there is an element $f\in E(M)$ such that $\co(M\del f)$ is 3-connected with $U_{2,4}$-minors. Let $G$ be a graph with $\co(M\del f)=L(G)$. Since $M$ has no $U_{2,5}$-restriction by Lemma \ref{long-line}, the matroid $\co(M\del f)$ also has no $U_{2,5}$-restriction. Hence, $|E(G)|\leq 4\times \binom{11}{2}=220$ by  $|V(G)|=r(\co(M\del f))\leq r(M)\leq 11$. On the other hand, since $r(\co(M\del f))\geq r(M)-3$ by Lemma \ref{cor:del-rank}, we have $$|E(M)|\leq |E(\co(M\del f))|+4=|E(G)|+4\leq224.$$
\end{proof}

%\section{Acknowledgments}

%The authors thank Jim Geelen, Tony Huynh and Stefan H. M. van Zwam for reading the paper carefully and giving some helpful comments.

\end{document}